\definecolor{mygreen}{HTML}{43a047}
\newtheorem{theorem}{Theorem}
\newtheorem{lemma}{Lemma}
\newtheorem{proposition}{Proposition}
\newtheorem{example}{Example}
\newtheorem{assumption}{Assumption}
\newtheorem{remark}{Remark}
\numberwithin{lemma}{section}
\numberwithin{proposition}{section}
\numberwithin{theorem}{section}
\numberwithin{equation}{section}
\newcolumntype{H}{>{\setbox0=\hbox\bgroup}c<{\egroup}@{}}
\newcommand{\Om}{\Omega}
\newcommand{\D}{\Delta}
\newcommand{\psitwo}{\psi_2^{\genk}}
\def \xin{\xi^{(n)}}
\def \xit{\xi^{(n)}_t}
\def \xitt{\xi^{(n)}_{tt}}
\def \bxin{\boldsymbol{\xi}}
\def \bxit{\boldsymbol{\xi_t}}
\def \bxitt{\boldsymbol{\xi_{tt}}}
\def \psit{\psi_t}
\def \psitt{\psi_{tt}}
\newcommand{\ddt}{\frac{\textup{d}}{\textup{d}t}}
\newcommand{\ddtn}{\frac{\textup{d}^n}{\textup{d}t^n}}
\newcommand{\bxi}{\boldsymbol{\xi}}
\newcommand{\ds}{\, \textup{d} s }
\newcommand{\dx}{\, \textup{d} x}
\newcommand{\intt}{\int_0^t}
\newcommand{\intT}{\int_0^T}
\newcommand{\R}{\mathbb{R}} 
\newcommand{\N}{\mathbb{N}} 
\newcommand{\Honezero}{H_0^1(\Omega)}
\newcommand{\bfq}{\boldsymbol{q}}
\newcommand{\genk}{\mathfrak{K}}
\newcommand{\tgenk}{\tilde{\mathfrak{K}}}
\newcommand\Lconv{\ast}
\newcommand{\mm}{\mathcal M}
\definecolor{grey}{rgb}{0.5,0.5,0.5}
\newcommand\dhookrightarrow{\mathrel{\ThisStyle{\abovebaseline[-.6\LMex]{%
				\ensurestackMath{\stackanchor[.15\LMex]{\SavedStyle\hookrightarrow}{%
						\SavedStyle\hookrightarrow}}}}}}
\colorlet{brown}{brown!80!black}
\definecolor{darkgreen}{rgb}{0,0.5,0}
\newcommand{\alignedintertext}[1]{%
	\noalign{%
		\vskip\belowdisplayshortskip
		\vtop{\hsize=\linewidth#1\par
			\expandafter}%
		\expandafter\prevdepth\the\prevdepth
	}%
}
\def\namedlabel#1#2{\begingroup
	\def\@currentlabel{#2}%
	\label{#1}\endgroup
}
\title[Energy decay of multi-term nonlocal MGT equations]{Energy decay of some multi-term nonlocal-in-time Moore--Gibson--Thompson equations}
\subjclass[2020]{35A01, 35A02, 35B40, 35D30, 35L05, 35L35, 35R11}
\keywords{Moore--Gibson--Thompson equation, fractional calculus, wave equations, energy decay} 
\author[M. Meliani and B. Said-Houari]{Mostafa Meliani \and Belkacem Said-Houari}
\address{ 
	Department of Mathematics \\ 
	Radboud University   \\ 
	Heyendaalseweg 135,
	6525 AJ Nijmegen, The Netherlands}
\email{m.meliani@math.ru.nl} 
\address{ 
	Department of Mathematics \\ 
	College of Sciences\\
	University of Sharjah
	\\
	P. O. Box: 27272, Sharjah, UAE}
\email{bhouari@sharjah.ac.ae}
\begin{document}

\begin{abstract}
	This paper aims to explore the long-term behavior of some nonlocal high-order-in-time wave equations. These equations, which have come to be known as Moore--Gibson--Thompson equations, arise in the context of acoustic wave propagation when taking into account thermal relaxation mechanisms in complex media such as human tissue. While the long-term behavior of linear local-in-time acoustic equations is well understood, their nonlocal counterparts still retain many mysteries. We establish here a set of assumptions that ensures exponential decay of the energy of the system. These assumptions are then shown to be verified by a large class of rapidly decaying memory kernels. Under weaker assumptions on the kernel we show that one may still obtain that the    energy vanishes but without a rate of convergence. Furthermore, we refine previous results on the local well-posedness of the studied equation and establish a necessary initial-data compatibility condition for the solvability of the problem.
\end{abstract}
\maketitle

\section{Introduction}

Motivated by acoustic wave propagation in complex media exhibiting anomalous diffusion, we consider a linear equation of the form
\begin{subequations}\label{Main_System}
	\begin{equation}\label{eq:first}
		\begin{aligned}
			\tau (\genk\Lconv \psi_t)_{tt}+ \psi_{tt} - \gamma  \genk \Lconv \Delta \psi_t - c^2 \Delta \psi - \nu \Delta\psi_{t}   = 0,\quad \text{in}\quad \Omega\times (0,T)
		\end{aligned}
	\end{equation}
	where $\Omega\subset \R^d$ is a smooth bounded domain.  
	Above $\tau>0$ is the thermal relaxation time, $c>0$, the wave propagation speed, the constants $\gamma>0$ and $\nu>0$ can be understood as damping parameters.
	
	We supplement \eqref{eq:first} with the initial conditions 
	\begin{eqnarray}\label{Initial_Conditions}
		\big(\psi, \psi_t, (\genk\Lconv\psi_t)_t\big)\Big|_{t=0}  = (\psi_0,\psi_1,\psi_2^{\genk})  
	\end{eqnarray}
	and boundary condition:  
	\begin{eqnarray}\label{Boundary_Conditions}
		\psi|_{\partial \Omega}=0. 
	\end{eqnarray}
	The convolution terms in \eqref{eq:first} are defined as follows: for $g\in \{\psi_t, \Delta \psi_t\}$, we write   
	\begin{align}
		(\mathfrak{K}\Lconv g)(t)=\int_0^t  g(t-s)\,\mathfrak{K}(\ds), 
	\end{align} 
\end{subequations}
where we assume that the kernel $\mathfrak{K}$ is a Radon measure and has a resolvent $\tgenk$ (i.e.,  $\genk\Lconv\tgenk =1$) which can be written as the sum
\begin{equation}\label{Kernel_K}
	\tgenk = A\delta_0 + \mathsf{r} \qquad \textrm{with } A\in\R \, \textrm{ and } \ \mathsf{r} \in L^q_{\textup{loc}}(\R^+),\quad q>1.
\end{equation}

When $\genk$ is  the Dirac delta pulse $\delta_0$, equation~\eqref{eq:first} reduces to  the integer-order Moore--Gibson--Thompson (MGT) \begin{equation}\label{eq:MGT}
	\begin{aligned}
		\tau \psi_{ttt}+ \psi_{tt}  - c^2 \Delta \psi - (\gamma+\nu) \Delta \psi_t= 0.
	\end{aligned}
\end{equation}
This equation arises in  acoustics ~\cite{thompson,moore1960propagation} 
as the linearization of the so-called Jordan-Moore-Gibson-Thompson equation, an alternative model to the well-known Kuznetsov equation to describe the vibrations of thermally relaxing fluids, when the heat conduction of the medium is modeled using the Maxwell--Cattaneo law instead of the classical Fourier law; see for instance~\cite{KatLasPos_2012}. 
Surprisingly,  equation \eqref{eq:MGT} also arises in the modeling of the so-called standard linear solid model of viscoelasticity~\cite{Gorain_Bose_1998,Bose_Gorai_1998} and as a model for the heat conduction after introducing a relaxation parameter in the type III Green-Naghdi heat conduction model (see, e.g., \cite{quintanilla2019moore,conti2020thermoelasticity}). Furthermore, the very same MGT equation appears in the context of photo-acoustics under the name of Nachman--Smith--Waag equation, as well as in a fractional form that corresponds to \eqref{eq:first} (by setting, e.g., $\alpha_0 =0$ in~\cite[Equation (4.88)]{ammari2011mathematical}).

In the context of acoustics, the importance of studying \eqref{eq:first} stems from mounting evidence that acoustic propagation in biomedical tissue exhibits anomalous diffusion~\cite{parker2022power}. Thus, a newer trend in the field is to take into account more realistic heat exchange and dissipation laws in the derivation and analysis of the acoustic models.  We refer the reader to, e.g.,~\cite{holm2019waves,wismer1995explicit,holm2011causal} for various models accounting for the power-law attenuation of acoustic waves. Time-fractional MGT equations were first derived in \cite{kaltenbacher2022time} by modifying the entropy equation in the system of governing equations to capture anomalous heat exchange described by Compte--Metzler laws~\cite{compte1997generalized}. Specifically, equation \eqref{eq:first} arises when closing the set of governing equations of sound motion by considering the heat flux law 
\begin{equation}\label{eq:heat_flux}
	\bfq + \tau \genk \Lconv \bfq_t = -\kappa_1 \nabla \theta - \kappa_2 \genk \Lconv \nabla \theta,
\end{equation}
which describes the evolution of the heat flux $\bfq$ in the medium as a function of the temperature gradient $\nabla \theta$.
This law can be viewed as an averaging of two of the nonlocal Compte--Metzler laws proposed in~\cite{compte1997generalized}. 
The constants $\kappa_1$ and $\kappa_2$ are medium-dependent thermal conductivity coefficients and are related to the quantities $\nu$ and $\gamma-\tau c^2$ in \eqref{eq:first}. 

\subsection*{Related results for the local and nonlocal MGT equation}
Equation \eqref{eq:MGT} has been by now widely studied in the mathematical literature, see for  example~\cite{kaltenbacher2011wellposedness,lasiecka2015moore,dell2017moore,pellicer2019wellposedness,bucci2020regularity,Trigg_et_al,P-SM-2019} and the references contained therein.  In \cite{kaltenbacher2011wellposedness} the authors considered \eqref{eq:MGT} in a bounded domain and show that the linear dynamics is described by a strongly continuous
semigroup, which is exponentially stable provided the dissipativity
condition 
\begin{equation}\label{Stab_Cond}
	(\nu+\gamma)-\tau c^2>0
\end{equation}
is fulfilled. More precisely, they proved under the assumption \eqref{Stab_Cond} that the energy: 
\begin{equation}\label{Energy_Norm}
	\mathsf{E}[\psi](t)=\|\psi_t+\tau\psi_{tt}\|_{L^2}^2+\|\nabla (\psi+\tau\psi_t)\|_{L^2}^2+\|\nabla \psi_t\|_{L^2}^2
\end{equation}
decays at an exponential rate. The result in \cite{kaltenbacher2011wellposedness} was extended to the whole space $\R^d$ by Pellicer and Said-Houari in \cite{Pellicer:2021aa}, where they showed the well-posedness and investigated the decay rate of the solutions of \eqref{eq:MGT}.  They used the energy method in the Fourier space to show that under the assumption  \eqref{Stab_Cond}, the energy norm \eqref{Energy_Norm} satisfies 
\begin{equation}
	\mathsf{E}[\psi](t)\lesssim (1+t)^{-d/2}. 
\end{equation}   
We also refer to the paper by Chen and Ikehata  \cite{Chen_Ikehata_2021} for additional decay results.   

In the case $\nu=0$, the well-posedness analysis of \eqref{Main_System} with $\genk$ being the Abel fractional  derivative kernel:
\begin{equation}\label{Abel_Kernel}
	\genk =\frac{t^{\alpha-1}}{\Gamma(\alpha)}, \qquad  0<\alpha<1
\end{equation}
was discussed by Kaltenbacher and Nikoli\'c~in \cite[Section 7]{kaltenbacher2022time}. In particular, they showed that the equation is well-posed for initial data $(\psi_0,\psi_1,\psitwo)\in H^2(\Omega)\cap H_0^1(\Om)\times \{0\} \times L^2(\Om)$. In addition, they also studied the limit as $\alpha\to1^-$, and  showed  that the solution converges to that of the MGT equation \eqref{eq:MGT}. Parameter-asymptotic analysis of the model as $\tau \to 0^+$ has been the subject of study in \cite{meliani2023unified}. Specifically, it was shown that uniform-in-$\tau$ well-posedness is achieved for initial data in $(\psi_0,\psi_1,\psitwo) \in H_0^1(\Om)\times \{0\} \times L^2(\Om)$ and that as the thermal relaxation time $\tau$ tends  to $0$, the solution of \eqref{Main_System} converges to that of a fractionally damped second-order equation at a linear rate in $\tau$. However, no results are known on the long-term behavior of the nonlocal-in-time MGT equations.
\subsection*{Main Contributions}
Our first contribution   involves  demonstrating  well-posedness  provided that the initial data satisfy 
\[ (\psi_0,\psi_1,\psi_2^{\genk}) \in H_0^1(\Omega) \times L^2(\Omega) \times L^2(\Om)\] 
and the kernel $\genk$  is of  the form \eqref{Kernel_K}. (See Section~\ref{Sec:Main_Results} for the precise assumptions on $\genk$).  

While much progress has been made in recent years in this area~\cite{vergara2008lyapunov,vergara2015optimal,fritz2022equivalence}, many questions relating to the dissipation of energy of evolution equations with a nonlocal leading term are open. In particular, it is not clear what the best approach is to show the energy decay of higher-order-in-time multi-term nonlocal equations. 
Hence, after showing that solutions exist for all time $T>0$, we tackle the question of asymptotic behavior.
Using the energy method, we construct a suitable Lyapunov functional that leads to the exponential decay of the solution. Constructing such functionals requires introducing various functionals to capture the dissipation of different energy components.  The crucial step in the proof is identifying these functionals and appropriately combining them. 
We will also need to assume that the equation parameters satisfy
\begin{equation}\label{ineqs:condition_coefficients}
	\nu > 0 \quad  \textrm{and}
	\quad \gamma - \tau c^2>0.
\end{equation}
This condition is compatible with observations that have been made in \cite{Pellicer:2021aa,kaltenbacher2011wellposedness} for the MGT equation (for $\genk=\delta_0$) showing that 
the condition $\nu + \gamma -\tau c^2 >0$ is necessary for the long-term stability of the solution. The necessity for positivity of the individual components $\nu$ and $\gamma-\tau c^2$ stems from the need to show decay of essentially two quantities $\psi$ and $\genk\Lconv \psit$ (or to be more precise, their spatial and temporal derivatives), such that each of $\nu$ and $\gamma-\tau c^2$ contribute to achieving this goal.

It is worth mentioning here that acoustic and viscoelastic wave equations with nonlocal dissipation have been studied by a number of authors, see, e.g.,~\cite{conti2007decay,messaoudi2007global,lasiecka2015moore,hrusa1988model,munoz1996decay} and the references contained therein. These equations have a leading term of integer order and have, typically, the form
\[u_{tt} -c^2 \Delta u - \genk\Lconv \D u =0,\]
or, for their MGT counterparts~\cite{lasiecka2015moore,conti2021Moore}
\[\tau u_{ttt} + u_{tt}  -\gamma \Delta u_t -c^2 \Delta u - \genk\Lconv \D(\alpha u_t+ \beta u) =0.\]
While some of the ideas can be borrowed from these studies, their results do not inform us \emph{a priori} on the long-term behavior of \eqref{Main_System}. 

\subsection*{Organization of the paper}
The rest of the paper is organized as follows: First, we introduce the notations that we will use throughout as well as the relevant function spaces for the analysis. In Section~\ref{Sec:Main_Results}, we present  and discuss  the main results of the paper, which are given in the form of two theorems: Theorem~\ref{thm:wellp} and Theorem~\ref{thm:decay_Lyap}. In Section~\ref{Sec:Kernel_Verification}, we discuss the verification of the assumptions on the kernel needed for the well-posedness theory.  We also provide there a number of examples of kernels satisfying these hypotheses. In Section~\ref{Section_Local_Existence}, we provide the proof of Theorem~\ref{thm:wellp}. Section~ \ref{Section_Asymptotic_Behavior_1} is devoted to the asymptotic behavior of solutions. More precisely, we prove, under an appropriate assumption on the coefficients, that energy norms of the solution decay exponentially fast in time. In Section~\ref{sec:weak_decay}, we show that under weaker assumptions on the memory kernel, the vanishing of the energy can still be obtained, however, we will not be able to provide a decay rate. Attached to the current work are two appendices: Appendix~\ref{Sec:Preliminaries} is dedicated to proving certain compactness results on the space of solutions needed for the proof of Theorem~\ref{thm:wellp}. In Appendix~\ref{appendix:solvability_semi_discrete}, we give details about the solvability of the semi-discrete initial-boundary value problem  which is needed in the local well-posedness proof.
\subsection*{Notation}
Throughout the paper, we will use the following notations.
\begin{itemize}
	\item Below, we will use the notation $A\lesssim B$ for $A\leq C\, B$ with a constant $C>0$ that may depend on the spatial domain $\Omega$, which we assume bounded and Lipschitz-regular in $\R^d$.
	Here $d\geq1$ is the dimension of the space.
	\item Let $X$ and $Y$ be two Banach spaces. We write
	$X \hookrightarrow Y$ (respectively $X\dhookrightarrow Y$) for the continuous (respectively compact) embedding of $X$ into $Y$.
	\item In this work, $\mm(\R^+)$ stands for the space of finite Radon measures on $\R^+$, while $\mm_{\textup{loc}}(\R^+)$ is the space of finite measures on every compact subset of $\R^+$. For a subset $J\subset \R^+$, we will use $\|\cdot\|_{\mm(J)}$ to denote the total variation norm associated to $\mm(J)$ which is defined for $\genk\in\mm(J)$ as
	\[ \|\genk\|_{\mm(J)}  = \sup_{\phi \in C(J)} \left|\int_{J} \genk(ds) \phi (s) \right|\qquad \textrm{with} \  |\phi(s)|\leq 1 \textrm{ for all} \ \ s \in J;   \]
	see, e.g, \cite[Ch. 3 Theorem 5.4]{gripenberg1990volterra} and \cite[Theorem 6.19]{Rudin1987real}. Below, we will also often use $\|\genk\|_{\mm(\{0\})}$ to capture the presence of a point mass at $0$ in the kernel $\genk$ (in which case $\|\genk\|_{\mm(\{0\})} \neq 0$).
	
	\item We will use $\Lconv$ to denote the Laplace convolution, which should be interpreted as
	\begin{align}
		&(\mathfrak{K}\Lconv g)(t)=\int_0^t \mathfrak{K} (s)\,g(t-s)\ds \quad &&\textrm{if }\genk, g \in L^1(0,t),\\
		&(\mathfrak{K}\Lconv g)(t)=\int_0^t \mathfrak{K}(\textup{d}s)\,g(t-s) \quad &&\textrm{if }\genk \in \mm(0,t),\ g \in L^1(0,t).
	\end{align} 
	For the definition of the Laplace convolution of an integrable function and a Radon measure, we refer the reader to \cite[Ch. 3]{gripenberg1990volterra}.
	\item
	Let the final time $T>0$. Along the lines of \cite{meliani2023unified}, we define the Sobolev-like space
	\begin{equation} \label{def_Xfp}
		H^1 \cap X_\genk^\infty(0,T) = \{u \in L^\infty(0,T) \ |\ u_t\in L^2(0,T), \ \genk \Lconv u_t \in L^\infty(0,T)\},
	\end{equation}
	with the norm
	$$\|\cdot\|_{H^1\cap X_\genk^\infty(0,T)} = \max \big(\|u\|_{L^\infty(0,T)},\|u_t\|_{L^2(0,T)}, \|\genk\Lconv u_t\|_{L^\infty(0,T)} \big).$$
	Additionally, we define
	\begin{equation} \label{def_Yfp}
		Y_\genk^p(0,T) = \{u \in L^p(0,T) \ |\ (\genk \Lconv u)_t \in L^p(0,T)\}, \qquad \textrm{where } \, 1 \leq p \leq \infty,
	\end{equation}
	with the norm
	$$\|\cdot\|_{Y_\genk^p(0,T)} = \big(\|u\|_{L^p(0,T)}^p + \|(\genk\Lconv u)_t\|_{L^p(0,T)}^p \big)^{1/p},$$ with the usual modification for $p=\infty$. Completeness and sequential compactness questions relating to the space $Y_\genk^p$ were the subject of \cite[Lemma 3]{meliani2023unified}.
	
	Furthermore, the space
	\begin{equation}\label{V_R_Space}
		V_{\genk}(0,T) = \Big\{u\in L^2(0,T)\, \big|\, u_{t}\in L^2(0,T),\, (\genk\Lconv u_{t}+u)_{tt} \in L^2(0,T) \Big\},
	\end{equation}
	endowed with the norm 
	\begin{equation}\label{V_R_Norm}
		\|u\|_{V_{\genk}(0,T)} = \big(\|u\|_{L^2(0,T)}^2 + \|u_t\|_{L^2(0,T)}^2+  \|(\genk\Lconv u_{t}+u)_{tt}\|_{L^2(0,T)}^2\big)^{1/2}
	\end{equation}
	is important towards defining the space of solutions
	\begin{equation}
		H_\psi =  \Big\{ \psi \in H^1 \cap X_{\genk}^\infty(0,T; \Honezero)\cap V_{\genk}(0,T;H^{-1}(\Om))\,\Big|\, \psit \in Y_{\genk}^\infty(0,T; L^2(\Omega)) \Big\}.
	\end{equation}
\end{itemize}
\section{Main results}\label{Sec:Main_Results}
In this section, we state the main results of this paper. The well-posedness  result is summarized   in Theorem \ref{thm:wellp}, while the result on the decay estimate is stated in Theorem \ref{thm:decay_Lyap}.

\subsection{{Well-posedness}} 
In order to state the {result on well-posedness} of system \eqref{Main_System}, we make the following assumptions on the kernel $\genk$.

\begin{assumption}\label{Assumption_Kernel}	
	We assume that: 
	\begin{enumerate}
		\item[($\mathcal{A}0$) \namedlabel{assu0}{($\mathcal{A}0$)}] $\genk  \in \mm_\textup{loc}(\R^+).$
		
	\item[($\mathcal{A}1$) \namedlabel{assu1}{($\mathcal{A}1$)}]We assume that $\genk$ verifies $$\intt (\genk\Lconv y, y)_{L^2}\ds\geq 0
	$$ for all $t\in \R^+$ and $y\in L^2(0,t;L^2(\Om))$.

	\item[($\mathcal{A}2$) \namedlabel{assu2}{($\mathcal{A}2$)}]	Further, assume that 
	\begin{equation}
		\intt\big((\genk \Lconv y)_t, y\big)_{L^2}\ds \geq -C_{\mathcal{A}2} \|y(0)\|_{L^2(\Om)}^2
	\end{equation}
	for all $t\in \R^+$ and $y\in Y_\genk^2(0,t;L^2(\Om))$, with $C_{\mathcal{A}2} \geq 0$.
	\item[($\mathcal{A}3$) \namedlabel{assu3}{($\mathcal{A}3$)}] Additionally, we will require that $\genk$ has a resolvent measure, $\tgenk$ (with $\genk\Lconv\tgenk=1$), which can be written as the sum 
	\begin{equation}\label{K_form}
		\begin{aligned}
			&\tgenk = A\delta_0 + \mathsf{r} \qquad \textrm{with } A\in\R^+ \, \textrm{ and } \ \mathsf{r} \in L^q_{\textup{loc}}(\R^+)
			, \quad q>1,\\
		\end{aligned}
	\end{equation}
	where $\delta_0$ is the Dirac delta pulse at zero.  Additionally, if $\|\genk\|_{\mm(\{0\})} \neq 0 $, we assume that $\mathsf{r} \in W^{1,q}(0,T)$ for some $q>1$.
\end{enumerate}
\end{assumption}
A discussion on the verification of Assumptions~\ref{assu0}--\ref{assu3} is given in Section~\ref{Sec:Kernel_Verification} where we also provide some examples of kernels satisfying the above assumptions.

Now, we state the well-posedness theorem. 
\begin{theorem}[Wellposedness]\label{thm:wellp}
Let $T>0$, and let $\tau,\,\nu>0$ and $\gamma\geq \tau c^2$. 
Assume that {Set of Assumptions \ref{Assumption_Kernel}} holds.
Then, given initial data
\begin{equation}
	(\psi_0,\psi_1,\psi_2^{\genk})  \in \begin{cases}
		H_0^1(\Omega) \times L^2(\Omega) \times L^2(\Om)  \qquad \ \ \text{if }\quad  \|\genk\|_{\mm(\{0\})} =C_{\mathcal{A}_2} =0, \\[2mm]
		H_0^1(\Omega) \times H^1_0(\Omega) \times L^2(\Om) \qquad \ \text{if }\quad \|\genk\|_{\mm(\{0\})} \neq 0 \textrm{ or } C_{\mathcal{A}2} \neq 0, 
	\end{cases}
\end{equation}
verifying 
\begin{equation}\label{compatibility_condition}
	\psi_1 =A\psitwo \qquad \textrm{when} \qquad \|\genk\|_{\mm(\{0\})} =0,
\end{equation}
there is a unique $\psi \in H_\psi$ which solves 
\begin{equation}\label{eqn:ibvp_eqn}
	\begin{aligned}
		\begin{multlined}[t] 
			\intT \big<(\tau\genk * \psit + \psi)_{tt}, v\big>_{H^{-1}\times H^1}\ds 
			+ c^2  \intT(\nabla \psi, \nabla v)_{L^2}\ds \\
			+\gamma \intT\big(\genk \Lconv \nabla \psit , \nabla v\big)_{L^2}\ds + \nu \intT\big(\nabla \psit , \nabla v\big)_{L^2}\ds 
			= 0 
		\end{multlined}	
	\end{aligned}		
\end{equation}
for all $v\in L^2(0,T;H_0^1(\Omega))$, with
\[ \big(\psi, \psit, (\genk\Lconv\psi_t)_t\big) |_{t=0} = \big(\psi_0,\psi_1,\psitwo\big).\]
Furthermore, the solution verifies the following energy estimate:
\begin{equation}\label{ineq:energy_wellp_thm}
	\begin{multlined}
		\|\tau (\genk\Lconv\psi_{t})_t(t) \|^2_{L^{2}(\Omega)}+
		\|\psi_{t}(t)\|^2_{L^2(\Omega)}+  \|\tau \genk\Lconv\nabla\psi_t(t)\|^2_{L^2(\Omega)}+\| \nabla \psi(t)\|^2_{L^2(\Omega)} \\+ \nu \intt \|\nabla\psi_t\|_{L^2(\Om)}^2\ds \lesssim \|\tau \psitwo\|^2_{L^2(\Omega)}+\|\psi_1\|^2_{L^2(\Omega)}\\+ (\|\genk\|_{\mm(\{0\})}^2 + C_{\mathcal{A}2})\, \| \tau \nabla\psi_0\|^2_{L^2(\Omega)}+\|\nabla\psi_0\|^2_{L^2(\Omega)},
	\end{multlined}
\end{equation} 
for all $t\in(0,T]$. The constants $C_{\mathcal{A}2}$ and $A$ above are as in \ref{assu2} and \ref{assu3} respectively.
\end{theorem}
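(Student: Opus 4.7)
\textbf{Plan for the proof of Theorem~\ref{thm:wellp}.}
The plan is a Faedo--Galerkin approximation. Let $\{w_j\}_{j\geq 1}$ be the $L^2$-orthonormal basis of Dirichlet eigenfunctions of $-\Delta$ and set $V_n = \textup{span}\{w_1,\dots,w_n\}$. The semi-discrete problem seeks $\psi^n\in V_n$ satisfying the Galerkin projection of \eqref{eqn:ibvp_eqn}, with initial data given by the $L^2$-projections of $(\psi_0,\psi_1,\psitwo)$. Solvability of the resulting integro-differential system is not immediate because of the nonlocal leading term $\tau(\genk\Lconv\psi^n_t)_{tt}$; this is where Assumption~\ref{assu3} enters, since convolving formally with the resolvent measure $\tgenk = A\delta_0+\mathsf{r}$ yields an equation with leading coefficient $(\tau + A)\psi^n_{tt}$, converting the system into a second-order Volterra ODE to which a contraction mapping on a small interval applies (the details being the content of Appendix~\ref{appendix:solvability_semi_discrete}).

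The uniform energy estimate is produced by testing with $v=\psi^n_t$ and integrating in time. The $\psi^n_{tt}$ and $c^2$ contributions produce $\tfrac{1}{2}\|\psi^n_t\|_{L^2}^2$ and $\tfrac{c^2}{2}\|\nabla\psi^n\|_{L^2}^2$, the $\nu$ term yields the dissipation $\nu\intt\|\nabla\psi^n_s\|_{L^2}^2\ds$, and the $\gamma$-convolution integral is non-negative by Assumption~\ref{assu1}. The delicate contribution is $\tau\intt\langle(\genk\Lconv\psi^n_s)_{ss},\psi^n_s\rangle \ds$. The key trick is to integrate this by parts once in time and then substitute the expression of $\psi^n_{ss}$ from the semi-discrete equation itself; after rearrangement, the resulting terms reorganize into the perfect squares
\[\tfrac{1}{2}\|\psi^n_t(t) + \tau(\genk\Lconv\psi^n_t)_t(t)\|_{L^2}^2 + \tfrac{c^2}{2}\|\nabla\psi^n(t) + \tau\,\genk\Lconv\nabla\psi^n_t(t)\|_{L^2}^2 + \tfrac{\tau(\gamma-\tau c^2)}{2}\|\genk\Lconv\nabla\psi^n_t(t)\|_{L^2}^2, \]
plus the nonnegative $(\gamma-\tau c^2)\intt(\genk\Lconv\nabla\psi^n_s,\nabla\psi^n_s)_{L^2}\ds$ coming once more from Assumption~\ref{assu1}, and the remainder $\tau\nu\intt((\genk\Lconv\nabla\psi^n_s)_s,\nabla\psi^n_s)_{L^2}\ds$, which Assumption~\ref{assu2} bounds from below by $-\tau\nu C_{\mathcal{A}2}\|\nabla\psi^n_1\|_{L^2}^2$. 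These identities close under the threshold $\gamma\geq \tau c^2$ and, together with the boundary terms arising in the integrations by parts, reproduce \eqref{ineq:energy_wellp_thm}. The extra regularity $\psi_1\in\Honezero$ demanded when $C_{\mathcal{A}2}\neq 0$ or $\|\genk\|_{\mm(\{0\})}\neq 0$ is precisely what is needed to absorb, respectively, the $C_{\mathcal{A}2}\|\nabla\psi^n_1\|_{L^2}^2$ residue from Assumption~\ref{assu2} and the point-mass boundary terms $\|\genk\|_{\mm(\{0\})}\|\nabla\psi^n_1\|_{L^2}^2$ generated by the convolution initial values.

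Armed with this uniform estimate I would extract a weak-$\ast$ limit of $(\psi^n)$ in $H_\psi$ and use the compactness results of Appendix~\ref{Sec:Preliminaries} to upgrade convergence of $\genk\Lconv\psi^n_t$ into a strong mode, sufficient to pass to the limit in every term of the linear weak formulation \eqref{eqn:ibvp_eqn}. The embeddings $\psi\in C([0,T];L^2(\Om))$, $\psi_t\in C([0,T];H^{-1}(\Om))$ afforded by the solution space identify $\psi(0)=\psi_0$ and $\psi_t(0)=\psi_1$, while convolving the convergent quantities with $\tgenk$ recovers $(\genk\Lconv\psi_t)_t(0)=\psitwo$; the constraint $\psi_1=A\psitwo$ in the case $\|\genk\|_{\mm(\{0\})}=0$ is the compatibility read off from the identity $\psi_t = \tgenk\Lconv (\genk\Lconv\psi_t)_t$ evaluated at $t=0^+$, since there the point mass $A\delta_0$ of $\tgenk$ is the only surviving contribution. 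Uniqueness is then immediate from linearity and \eqref{ineq:energy_wellp_thm} applied to the difference of two solutions. I expect the principal obstacle to lie in the second paragraph: the substitution of $\psi^n_{ss}$ from the equation to produce the perfect-square structure is the core computation, and recognizing that it singles out precisely the threshold $\gamma\geq\tau c^2$ under the nonlocal framework of Assumption~\ref{Assumption_Kernel} is what closes the argument.
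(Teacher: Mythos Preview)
Your overall architecture (Galerkin approximation, energy identity with the ``perfect squares'' $E[\psi^n]$, compactness, limit, initial data, uniqueness) matches the paper's, and your route to the energy identity --- testing with $\psi^n_t$, integrating the $\tau$-term by parts, then substituting $\psi^n_{ss}$ from the equation --- is algebraically equivalent to the paper's cleaner device of first rewriting the equation as a wave equation in the unknown $\tau\genk\Lconv\psi_t+\psi$ and testing with its time derivative. So the core computation is fine.

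There are, however, two genuine gaps.

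\textbf{Missing bootstrap to individual bounds.} The perfect squares you obtain control only the \emph{combined} quantities $\|\psi^n_t+\tau(\genk\Lconv\psi^n_t)_t\|_{L^2}$ and $\|\nabla\psi^n+\tau\genk\Lconv\nabla\psi^n_t\|_{L^2}$ (and, when $\gamma>\tau c^2$, also $\|\genk\Lconv\nabla\psi^n_t\|_{L^2}$). They do \emph{not} directly yield \eqref{ineq:energy_wellp_thm}, which bounds $\|\psi_t\|_{L^2}$, $\|\tau(\genk\Lconv\psi_t)_t\|_{L^2}$, $\|\nabla\psi\|_{L^2}$, $\|\tau\genk\Lconv\nabla\psi_t\|_{L^2}$ \emph{separately}; nor do they place $\psi^n$ in a bounded set of $H_\psi$, which is what you need to pass to the limit. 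The paper closes this via a Volterra bootstrap: from $(\tau\genk\Lconv\psi_t)_t+\psi_t=z\in L^\infty(0,T;L^2)$ one convolves with $\tgenk=A\delta_0+\mathsf r$ to obtain the second-kind Volterra equation $(\tau+A)\psi_t+\mathsf r\Lconv\psi_t=Az+\mathsf r\Lconv z$, whence $\|\psi_t\|_{L^\infty(L^2)}\lesssim\|z\|_{L^\infty(L^2)}$ and then $\|(\genk\Lconv\psi_t)_t\|_{L^\infty(L^2)}$ follows. An analogous argument on $\tau\genk\Lconv\nabla\psi_t+\nabla\psi=y$ recovers $\|\nabla\psi\|_{L^\infty(L^2)}$ in the borderline case $\gamma=\tau c^2$. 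This is the second (and essential) use of \ref{assu3}; you invoke \ref{assu3} only for solvability of the semi-discrete ODE and miss this step entirely.

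\textbf{Uniqueness.} You write that uniqueness is ``immediate from linearity and \eqref{ineq:energy_wellp_thm} applied to the difference of two solutions.'' But \eqref{ineq:energy_wellp_thm} was derived by testing with $(\tau\genk\Lconv\psi_t+\psi)_t$, which for a general $\psi\in H_\psi$ lies only in $L^\infty(0,T;L^2(\Omega))$ and is \emph{not} an admissible test function in \eqref{eqn:ibvp_eqn} (which requires $v\in L^2(0,T;H_0^1(\Omega))$). The energy identity is available only at the Galerkin level. The paper therefore proves uniqueness by the Evans-type dual test function: for fixed $t'$ one sets $w(t)=\int_t^{t'}\psi(s)\ds$ (and an analogous $w_\genk$), which \emph{is} in $L^2(0,T;H_0^1(\Omega))$, and tests against that.
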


The proof of Theorem \ref{thm:wellp} is given in Section~\ref{Section_Local_Existence} and it is based on combining energy estimates with compactness properties of the space of solutions $H_\psi$. Before presenting the results on the asymptotic behavior of the solutions, we will briefly provide some comments and remarks related to Theorem \ref{thm:wellp}.  
\begin{enumerate}
\item[1.] Theorem \ref{thm:wellp} allows us to cover, among others, a large class of completely monotone kernels. We refer the reader to~\cite[Ch. 5, Theorem 5.4]{gripenberg1990volterra} for a result on the resolvent measures of completely monotone functions. 

\item[2.]The assumption on the compatibility of the initial data \eqref{compatibility_condition} is justified by the discussion on unique solvability of the semi-discrete problem in Appendix~\ref{appendix:solvability_semi_discrete}. 
For the fractional derivative kernels given by  \eqref{Abel_Kernel}, 
the compatibility condition \eqref{compatibility_condition} implies that $\psi_1 =0$ (this is due to its resolvent $\tgenk$ having no point mass at 0; see~\cite[Ch. 5, Theorem 5.4]{gripenberg1990volterra}). If we take the example of the exponential kernel 
$\genk =e^{-\beta t},$
with $\beta>0$, this condition dictates that $\psi_1 = \psitwo$ (this is due to its resolvent being $\tgenk = \delta_0 +\beta$). 
\item[3.] Note that under the stronger assumptions that  $\psit(0)=\psi_1=0$ and that $\genk$ has a regular enough resolvent $\tgenk$ (i.e., $A=0$  in \eqref{K_form}),
the well-posedness theory of system~\eqref{Main_System} as well as its $\tau$-limiting behavior follow from the analysis of \cite{meliani2023unified}.

\item[4.] {Set of Assumptions~}\ref{Assumption_Kernel} on the kernel is weaker than those made in \cite{meliani2023unified,nikolic2023nonlinear}. Thus, the cases of, e.g., the exponential kernel
\begin{equation}\label{Exponential_Kernel}
	\genk(t) = e^{-\beta t}, \qquad \beta>0
\end{equation}
or the polynomial kernel 
\begin{equation}\label{Polynomial_Kernel}
	\genk(t) = \frac{1}{(1+t)^{p}}, \qquad p>1
\end{equation}
satisfy {Set of Assumptions~}\ref{Assumption_Kernel} but not the assumptions in \cite{meliani2023unified,nikolic2023nonlinear}. Specifically, their resolvent contains a point mass at 0 (see, \cite[Ch. 5, Theorem 5.4]{gripenberg1990volterra}) and is thus not regular enough in the sense needed in \cite{meliani2023unified,nikolic2023nonlinear}.
This relaxation of the theoretical requirements regarding the regularity of the resolvent measure kernel is made possible by the presence of the $\nu$-term (with $\nu>0$), which gives us, after standard  
testing, control over $\|\nabla \psi_t\|_{L^2(0,T;L^2(\Om))}$ which we in turn leverage to consider ``nicer" solution time-spaces:
$$ H^1\cap X_\genk^\infty(0,T) = \{u \in L^\infty(0,T) \ |\ u_t\in L^2(0,T), \ \genk \Lconv u_t \in L^\infty(0,T)\}.$$
Indeed, the above-space is complete with no extra conditions on the regularity of the resolvent of $\tgenk$; see Lemma~\ref{Lemma:hybrid_Caputo_seq_compact} below.
\end{enumerate}

\subsection{Asymptotic behavior}
The second main result of this paper is to show the exponential decay of the energy
\begin{equation}\label{E_psi_def}
\begin{multlined}
	E[\psi](t):=\frac{1}{2} \Big(\|\tau (\genk\Lconv \psi_t)_t+\psi_t\|_{L^2(\Om)}^2+c^2\|\nabla(\tau \genk\Lconv \psi_t+\psi)\|_{L^2(\Om)}^2
	\\\hphantom{\|\psi_t\|_{L^2(\Om)}^2+c^2\|\nabla(\tau \genk\Lconv \psi_t\|_{L^2(\Om)}^2}+ \tau(\gamma-\tau c^2) \|\nabla \genk\Lconv \psi_t\|_{L^2(\Om)}^2\Big) \quad \textrm{for} \quad t\geq 0. 
\end{multlined}
\end{equation}
This will be done under stronger assumptions on the kernel than those made in {Set of Assumptions~}\ref{Assumption_Kernel}. The reason for that is the need to extract nice properties from the kernel, which will allow us to recover the dissipation for  some components of the above energy norm. 
It is clear from the definition of $E[\psi](t)$ that the assumption $\gamma-\tau c^2 \geq 0$ is needed for the coercivity of the energy functional. In fact, this is also needed to show that the above energy is dissipative, especially in the case $\nu=0$; see \eqref{ddt_Energy} below. 

\begin{assumption}\label{Assumption_Kernel_Exponential_Decay}
\begin{enumerate} Assume that:
	\item[($\mathcal{A}4$) \namedlabel{assu4}{($\mathcal{A}4$)}] There exist constants $\lambda_{sup}>0$ and  $\tilde{c}>0$ such that for all $0\leq\lambda\leq\lambda_{sup}$
	\begin{equation}
		\intt e^{\lambda s} (\genk\Lconv y, y)_{L^2}\ds\geq \tilde{c} \intt  e^{\lambda s}\|\genk\Lconv y\|_{L^2(\Om)}^2\ds,
	\end{equation}
	for all $t\in \R^+$ and $y\in L^2(0,t;L^2(\Om))$.
	\item[($\mathcal{A}5$) \namedlabel{assu5}{($\mathcal{A}5$)}]  Further, assume that 
	\begin{equation} 
		\intt e^{\lambda s} (\genk\Lconv y_t, y)_{L^2}\ds\geq  -\frac\lambda2 \int_0^t e^{\lambda s} (\genk\Lconv y, y)_{L^2}\ds
	\end{equation}
	for all $t\in \R^+$ and $y\in H^1\cap X_\genk(0,t;L^2(\Om))$ such that $y(0) = 0$. 
\end{enumerate}
\end{assumption}
Section~\ref{Sec:Verifying_exponential_kernels} provides a discussion on verifying Assumptions~\ref{assu4} and \ref{assu5}. Now, we state the decay estimate. 

\begin{theorem}[Exponential decay]\label{thm:decay_Lyap}
Suppose that Assumptions~\ref{Assumption_Kernel} and~\ref{Assumption_Kernel_Exponential_Decay} on the kernel hold. Assume in addition that  \eqref{ineqs:condition_coefficients} holds and that initial data are chosen as in Theorem~\ref{thm:wellp}, and that $\psi_1=0$. Then, there exists $\lambda>0$ such that for all $t\geq 0$
\begin{equation}\label{Integral_Ineq_L}
	E[\psi](t) \lesssim E[\psi](0) e^{-\lambda t} .  
\end{equation}
\end{theorem}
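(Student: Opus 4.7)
The plan is to apply the Lyapunov functional method via a weighted-in-time energy argument: I work with $e^{\lambda s}E[\psi]$ and close a Gronwall-type estimate using the coercive Assumptions~\ref{assu4}--\ref{assu5}. Throughout I abbreviate $\kappa := \genk\Lconv\psi_t$ and $w := \tau\kappa + \psi$, so that $w_t = \tau\kappa_t + \psi_t$, $\nabla w = \tau\nabla\kappa + \nabla\psi$, and $E[\psi] = \tfrac{1}{2}\|w_t\|_{L^2}^2 + \tfrac{c^2}{2}\|\nabla w\|_{L^2}^2 + \tfrac{\tau(\gamma-\tau c^2)}{2}\|\nabla\kappa\|_{L^2}^2$.

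The starting point is an energy identity obtained by testing \eqref{eqn:ibvp_eqn} against the multiplier $w_t$ and integrating by parts. Using $\nabla w = \tau\nabla\kappa + \nabla\psi$, the cross terms $\pm c^2\tau(\nabla\psi, \nabla\kappa_t)_{L^2}$ telescope and the coefficient of $\tfrac{d}{dt}\|\nabla\kappa\|_{L^2}^2$ reduces exactly to $\tfrac{\tau(\gamma-\tau c^2)}{2}$, yielding
\[
\tfrac{d}{dt}E[\psi](t) = -\nu\|\nabla\psi_t\|_{L^2}^2 - (\gamma-\tau c^2)(\genk\Lconv\nabla\psi_t, \nabla\psi_t)_{L^2} - \tau\nu\bigl((\genk\Lconv\nabla\psi_t)_t, \nabla\psi_t\bigr)_{L^2}.
\]
Multiplying by $e^{\lambda s}$ and integrating over $(0,t)$, Assumption~\ref{assu5} (applicable because $\nabla\psi_t(0) = \nabla\psi_1 = 0$) controls the last term by a $\tfrac{\tau\nu\lambda}{2}$-fraction of the coercive pairing, and Assumption~\ref{assu4} turns the latter into $\|\genk\Lconv\nabla\psi_t\|_{L^2}^2$. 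For $\lambda \in (0, \min(\lambda_{sup}, 2(\gamma-\tau c^2)/(\tau\nu)))$, this produces
\[
e^{\lambda t}E[\psi](t) + c_0\!\int_0^t\! e^{\lambda s}\bigl(\|\nabla\psi_t\|_{L^2}^2 + \|\genk\Lconv\nabla\psi_t\|_{L^2}^2\bigr)ds \leq E[\psi](0) + \lambda\!\int_0^t\! e^{\lambda s}E[\psi]\,ds.
\]

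The third component of $E[\psi]$ is pointwise dominated by the dissipation on the left. For the remaining components I introduce the perturbations
\[
\Phi_1(t) := (w_t, \psi)_{L^2}, \qquad \Phi_2(t) := (w_t, w)_{L^2},
\]
whose time derivatives---computed from the rearranged equation $\tau w_{tt} - \gamma\Delta w + (\gamma-\tau c^2)\Delta\psi = \tau\nu\Delta\psi_t$---produce $-c^2\|\nabla\psi\|_{L^2}^2$ and $-c^2\|\nabla w\|_{L^2}^2$ respectively, modulo cross terms treated by Young. The parasitic $+\|w_t\|_{L^2}^2$ arising in $\tfrac{d}{dt}\Phi_2$ is the main snag; it is absorbed by splitting $\|w_t\|_{L^2}^2 \leq 2\tau^2\|\kappa_t\|_{L^2}^2 + 2\|\psi_t\|_{L^2}^2$, dominating $\|\psi_t\|_{L^2}^2$ via Poincaré against $\nu\|\nabla\psi_t\|_{L^2}^2$, and controlling $\|\kappa_t\|_{L^2}$ in the weighted-$L^2_t$ norm via the resolvent representation $\psi_t = A\kappa_t + \mathsf{r}\Lconv\kappa_t$ supplied by \ref{assu3} (using $\kappa(0)=0$, together with Young's convolution inequality for $\mathsf{r}\in L^q_{\textup{loc}}$). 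Assembling $\mathcal{L}(t) := E[\psi](t) + \epsilon_1\Phi_1(t) + \epsilon_2\Phi_2(t)$, Cauchy--Schwarz and Poincaré yield $|\Phi_i(t)| \lesssim E[\psi](t)$, hence $\mathcal{L} \sim E[\psi]$ for $\epsilon_i$ small; choosing $\epsilon_i, \lambda$ small enough to absorb $\lambda\!\int\! e^{\lambda s}E[\psi]\,ds$ into the strengthened dissipation closes the estimate and yields \eqref{Integral_Ineq_L}.

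The principal obstacle is reconciling the third-order-in-time coupling with the memory. The cross term $-\tau\nu((\genk\Lconv\nabla\psi_t)_t, \nabla\psi_t)_{L^2}$ in the energy identity would derail dissipativity without Assumption~\ref{assu5}---which is precisely why the compatibility $\psi_1 = 0$ is needed---and absorbing the parasitic $+\|w_t\|_{L^2}^2$ from $\Phi_2$ forces the strict inequality $\gamma - \tau c^2 > 0$ in \eqref{ineqs:condition_coefficients}, stronger than the non-strict bound $\gamma \geq \tau c^2$ assumed for well-posedness in Theorem~\ref{thm:wellp}.
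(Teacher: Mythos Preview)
Your weighted energy identity and the use of Assumptions~\ref{assu4}--\ref{assu5} to arrive at the first displayed inequality are correct and match the paper's Lemma~\ref{lem:main_est} exactly. Your functional $\Phi_2=(w_t,w)_{L^2}$ is the paper's $F_1$, and its treatment is fine. The genuine gap is in how you absorb the parasitic term $\int_0^t e^{\lambda s}\|w_t\|_{L^2}^2\,ds$.

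Your plan is to split $\|w_t\|^2\leq 2\tau^2\|\kappa_t\|^2+2\|\psi_t\|^2$ and then bound $\int_0^t e^{\lambda s}\|\kappa_t\|_{L^2}^2\,ds$ by inverting the relation $\psi_t=A\kappa_t+\mathsf{r}\Lconv\kappa_t$. This does not close uniformly in $t$. Assumption~\ref{assu3} gives only $\mathsf{r}\in L^q_{\textup{loc}}(\R^+)$, so Young's convolution inequality produces constants involving $\|\mathsf{r}\|_{L^1(0,t)}$ which in general grow without bound (already for the exponential kernel $\genk=e^{-\beta t}$ one has $\mathsf{r}\equiv\beta$ and $\|\mathsf{r}\|_{L^1(0,t)}=\beta t$). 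When $A=0$ the situation is worse: the Volterra relation becomes first-kind and no pointwise extraction of $\kappa_t$ is possible. In short, the resolvent structure of~\ref{assu3} is a local-in-time tool; it cannot deliver the $t$-uniform estimate needed for exponential decay. Your auxiliary functional $\Phi_1=(w_t,\psi)_{L^2}$ does not help here either: its derivative contributes $(w_t,\psi_t)_{L^2}$, which after Young's inequality still leaves a residual $\varepsilon\|w_t\|^2$ on the wrong side.

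The paper closes this gap differently: it introduces the functional $F_2(t)=-(\tau\kappa,\,w_t)_{L^2}$, whose time derivative produces $-\|w_t\|_{L^2}^2$ directly (via the identity $-\tau\kappa_t=-w_t+\psi_t$), together with cross terms that are controlled by $\|\nabla\psi_t\|^2$, $\|\nabla\kappa\|^2$, and a small multiple of $\|\nabla w\|^2$. This is the missing ingredient; once $F_2$ is included in the Lyapunov combination $\mathcal{L}=N_0E[\psi]+F_1+N_1F_2$ (your $\Phi_1$ is unnecessary), the argument closes by choosing $N_0,N_1,\varepsilon_i,\lambda$ in the right order.
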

The proof of Theorem \ref{thm:decay_Lyap} is given in Section~\ref{Section_Asymptotic_Behavior_1}, and its idea is based on the construction of a suitable Lyapunov functional that is shown to decay exponentially under the kernel Assumptions~\ref{Assumption_Kernel_Exponential_Decay}. We briefly provide here some comments and remarks related to Theorem \ref{thm:decay_Lyap}.
\begin{enumerate}
\item[1.] The assumption that $\psi_1=0$ is essential in the nonlocal case ($\genk \neq \delta_0$) and allows us to write 
$$(\genk\Lconv\psi_t)_t= \genk\Lconv\psitt.$$
This is needed in order to be able to use the coercivity  assumption \ref{assu5} to control certain terms that will arise in the analysis; see, for instance, inequality~\eqref{why_psi1_zero} below.
We refer the reader to \cite[Ch. 3, Corollary 7.3]{gripenberg1990volterra} for details on the differentiation properties of convolution with finite Radon measures. Naturally, when $\genk =\delta_0$, the relationship above always holds and we do not have to require $\psi_1=0$.
\item[2.] In the case $\genk = \delta_0$, the condition \eqref{ineqs:condition_coefficients} can be weakened to 
\eqref{Stab_Cond},
which is a necessary condition for the asymptotic stability of \eqref{eq:MGT}. The reason for this is that in this case, $\genk\Lconv\psit$ and $\psit$ coincide, such that in, e.g, estimate~\eqref{E_Main_Estimate} only the sum $\nu+\gamma -\tau c^2$ need to be positive
(the constant $\tilde c =1$ in \ref{assu4} when $\genk =\delta_0$). Hence, our stability result covers the stability results obtained for the solution of \eqref{eq:MGT}. 
\item[3.] A finer decay result is given in Section~\ref{Sec:Finer_Decay_Result}, where we show decay of the individual quantities $\psit$ and $(\genk\Lconv \psit)_t$ (and not simply their sum as in the definition of the energy $E[\psi
]$); see Proposition~\ref{thm:decay_all_quants}.
\end{enumerate}

\section{Examples of covered kernels}\label{Sec:Kernel_Verification}
We give in this sections examples of kernels verifying {Set of Assumptions~}\ref{Assumption_Kernel} which we rely on to prove Theorem~\ref{thm:wellp}. One obvious example is given by:
\begin{example}\label{ex:dirac}
$\genk=\delta_0$, the Dirac pulse at 0. In which case, \eqref{eq:first} coincides with \eqref{eq:MGT}. For this kernel, \ref{assu0} is verified on account of $\|\genk\|_{\mm{(\R^+)}} =1$. Additionally Assumptions~\ref{assu1}--\ref{assu2} are verified and the constant in Assumption~\ref{assu2} is given by $C_{\mathcal{A} 2 } =1$. The resolvent $\tgenk =\mathsf{r}$ is the constant function $t \mapsto 1$ and fulfills the regularity assumptions of \ref{assu3}.
\end{example}

The Dirac delta pulse is not the only kernel that satisfies {Set of Assumptions~}\ref{Assumption_Kernel}. We provide in what follows a collection of results that allows us to verify {Set of Assumptions~}\ref{Assumption_Kernel} for certain completely monotone functions; see Examples~\ref{ex:exp}--\ref{ex:poly_decay} below.  

\subsection*{Verifying Assumption~\ref{assu1}}

Thanks to \cite{nohel1976frequency}, we know that \ref{assu1} holds, among others, for all kernels $\genk \in L^1_{\textup{loc}}(\R^+)$ such that $$(-1)^n\ddtn\genk\geq0 \quad \textrm {for} \quad n \in \{0,1,2\}.$$ 

Assumption~\ref{assu1} can alternatively be checked by means of Fourier transform, using Plancherel theorem. Indeed, \ref{assu1} is verified if the Fourier transform of $\genk$ is a.e non-negative.
\subsection*{Verifying Assumption~\ref{assu2}}
In order to verify Assumption~\ref{assu2} for completely monotone kernels, we prove the following lemma.
\begin{lemma}\label{lemma:Zacher_adapted}
Let $\genk \in L^1_{\textup{loc}}(\R^+)$ such that $$(-1)^n\ddtn\genk\geq0 \quad \textrm {for} \quad n \in \{0,1\},$$ 
and $y \in L^2(0,T;L^2(\Om))$. Then for $t>0$, the following holds
\begin{equation}
	\intt\big((\genk \Lconv y)_t, y\big)_{L^2}\ds \geq 0.
\end{equation}
\end{lemma}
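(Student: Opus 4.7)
The plan is to establish an Alikhanov-type pointwise identity for smooth data and then transfer it to the general case by approximation. I would first work under the temporary assumption that $\genk \in C^1([0,T])$ and $y \in C^1([0,T]; L^2(\Om))$, so that the classical differentiation formula
\[
(\genk \Lconv y)_t(s) = \genk(0)\,y(s) + \int_0^s \genk'(s-r)\,y(r)\,dr
\]
is valid pointwise. Substituting this into the inner product, applying the elementary algebraic identity $2(a,b)_{L^2} - \|b\|_{L^2}^2 = \|a\|_{L^2}^2 - \|a-b\|_{L^2}^2$ with $a = y(s)$, $b = y(r)$ under the convolution integral, and using $\int_0^s \genk'(s-r)\,dr = \genk(s)-\genk(0)$, I arrive at the key pointwise identity
\[
2\bigl(y(s), (\genk \Lconv y)_t(s)\bigr)_{L^2} = \frac{d}{ds}\bigl(\genk \Lconv \|y\|_{L^2}^2\bigr)(s) + \genk(s)\,\|y(s)\|_{L^2}^2 + \int_0^s \bigl(-\genk'(s-r)\bigr)\,\|y(s)-y(r)\|_{L^2}^2\,dr.
\]

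After integration in $s$ over $(0,t)$, all three right-hand terms are manifestly non-negative: the first telescopes to $(\genk \Lconv \|y\|_{L^2}^2)(t) \geq 0$ by $\genk \geq 0$; the second by $\genk \geq 0$; and the third by $-\genk' \geq 0$. This settles the lemma under the smoothness assumption. To lift the restriction, I would mollify $\genk$ by $\genk_\epsilon := \genk \Fconv \phi_\epsilon$ with a non-negative smooth mollifier: since $\genk \geq 0$ and its distributional derivative is non-positive, $\genk_\epsilon$ is smooth, non-negative, non-increasing, and $\genk_\epsilon \to \genk$ in $L^1_{\textup{loc}}(\R^+)$; simultaneously, $y$ is approximated by $y_\epsilon \in C^1([0,T]; L^2(\Om))$ in $L^2(0,T; L^2(\Om))$. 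The smooth identity yields $\int_0^t \bigl((\genk_\epsilon \Lconv y_\epsilon)_t, y_\epsilon\bigr)_{L^2}\ds \geq 0$, which must then be passed to the limit.

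The main obstacle is precisely this passage to the limit: the pointwise identity explicitly involves $\genk(0)$, which is infinite for the kernels of greatest interest such as the Abel kernel $\genk(s) = s^{\alpha-1}/\Gamma(\alpha)$, $\alpha \in (0,1)$. This is circumvented by integrating the identity in $s$ \emph{before} regularising; in the integrated form, the singular contribution $\genk(0)\|y(s)\|_{L^2}^2$ is reabsorbed into the well-defined quantity $(\genk \Lconv \|y\|_{L^2}^2)(t)$, which makes sense for any $\genk \in L^1_{\textup{loc}}(\R^+)$. The remaining limit in $\int_0^t ((\genk_\epsilon \Lconv y_\epsilon)_t, y_\epsilon)_{L^2}\,ds$ can then be justified by a Volterra integration-by-parts that trades the $t$-derivative on the convolution for one on $y_\epsilon$, combined with the $L^1_{\textup{loc}}$-continuity of the Laplace convolution; the preserved non-negativity delivers the claimed inequality for arbitrary $y \in L^2(0,T;L^2(\Om))$.
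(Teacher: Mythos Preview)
Your approach is essentially the same as the paper's: both rely on the Alikhanov-type pointwise identity, a regularization of the kernel preserving the sign conditions $\genk_n\geq 0$ and $\genk_n'\leq 0$, time-integration to turn the identity into a sign statement, and a final passage to the limit. The only minor difference is that the paper regularizes $\genk$ alone (citing that the identity already holds for $y\in L^2$ once the kernel is $W^{1,\infty}$), whereas you also smooth $y$; this is harmless and your limit-passage discussion is adequate.
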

\begin{proof}
The idea of the proof relies on successive approximations of $\genk$ using an approximating sequence 
$\{\genk_n\}_{n\in \N}\subseteq W^{1,\infty}(0, T)$ which verifies: 
\[\genk_n \geq 0, \qquad \frac{\textup{d}}{\textup{d}t} \genk_n \leq 0.\]
Such an approximating sequence is constructed along the lines of \cite[Lemma 5.1]{kaltenbacher2022limiting}. This allows exploiting the following identity:
\begin{equation}
	\big((\genk \Lconv y)_t,y\big)_{L^2}  = \frac12 \frac{\textup{d}}{\textup{d} t }\left(\genk \Lconv \|y\|_{L^2(\Om)}^2 \right) (t) + \frac 12 \frac{\textup{d}}{\textup{d} t } \genk(t) \|y(t)\|_{L^2(\Om)}^2 + \intt [-\frac{\textup{d}}{\textup{d}t}\genk (s)] \|y(t) - y(t-s)\|^2_{L^2(\Om)} \ds.
\end{equation}
See, e.g., \cite[Lemma 2.3.2]{zacher2010giorgi} and \cite[Lemma 3.1]{van2021existence}.

Integrating in time and using the assumption on the sign of $\genk_n$ and $\dfrac{\textup{d}}{\textup{d}t}\genk_n$ yields the desired result first for $\genk_n$, and by taking the limit $n\to\infty$, for $\genk$.

\end{proof}

\subsection*{Verifying Assumption~\ref{assu3}}
Assumption \ref{assu3} is designed to allow for smooth regular kernels. Indeed, if we were to assume the resolvent to be simply regular, say $\tgenk\in L^q(0,T)$ for some $q>1$ as was done in \cite{meliani2023unified}, then we would exclude large families of kernels. For example, it is known from \cite[Ch. 5, Theorem 5.4]{gripenberg1990volterra} that completely monotone kernels only admit a regular resolvent (i.e., $A=0$) if $\displaystyle\lim_{t\to0+}\genk =\infty$. When $\genk$ is continuous we can more precisely state
\[ A= \displaystyle\lim_{t\to0+} \frac 1 {\genk}. \]

The requirement that $q>1$ comes from the fact that the space $L^1(0,T)$ has poor sequential compactness properties as it is not reflexive~\cite[Ch. 4]{brezis2010functional}. Thus to prove attainment of the initial data we will rely on the weak sequential compactness of the unit ball in $L^q(0,T)$ for $q>1$  combined with Lemma~\ref{lem:continuous_embedding} in the course of the proof of Theorem~\ref{thm:wellp}. ~\\

Assumption~\ref{assu3} is verified, up to showing that $q>1$, among others, by completely monotone kernels according to~\cite[Ch. 5, Theorem 5.4]{gripenberg1990volterra}. Indeed, the result in \cite{gripenberg1990volterra} only guarantees the existence of resolvents that have an $L^1$-regular component $\mathsf{r}$. One may further construct examples where the requirement $q>1$ would not be satisfied along the lines of \cite[Example 2.2.2]{zacher2010giorgi}. However, in practice, we expect the requirement that $q>1$ is not overly restrictive and in fact will be shown to be verified for Examples~\ref{ex:exp}--\ref{ex:poly_decay} considered below.

In Appendix~\ref{App_sufficient_condition}, we furthermore provide a result which gives a sufficient condition for the resolvent $\tgenk = \mathsf{r} \in W^{1,q}(0,T)$ when $$\genk = B \delta_0 + \mathfrak{s},$$
with $B\in \R\setminus\{0\}$ and $\mathfrak{s} \in L^q(0,T)$ for $q> 1$.

From above, we conclude that completely monotone kernels verify \ref{assu0}--\ref{assu3} (up to showing that the resolvent $\tgenk$ has a regular enough component $\mathsf{r}$). In particular, we mention the following relevant kernel examples:

\begin{example}\label{ex:exp}
The classical exponential kernel \eqref{Exponential_Kernel} 
whose resolvent is given by $\delta_0 + \beta$. The function $t \mapsto \beta$ is a constant function and has thus the desired regularity.
\end{example} 
\begin{example}
The fractional derivative (Abel) kernel  \eqref{Abel_Kernel}, whose resolvent is given by $t \mapsto \dfrac{t^{-\alpha}}{\Gamma(1-\alpha)}$ and is thus in $L^{\frac{1-\varepsilon}{\alpha}}(0,T)$ for $0\leq \varepsilon<1-\alpha$ (with naturally $\dfrac{1-\varepsilon}{\alpha} >1$).
\end{example}  
\begin{example}\label{ex:Abel_reg}
The exponentially regularized Abel kernel (along the lines of that found in~\cite{messaoudi2007global})
$$\genk =\frac{t^{\alpha-1}e^{-\beta t}}{\Gamma(\alpha)},$$
with $0<\alpha<1,$ $\beta>0$. The regularity of the resolvent is ensured by combining the fact that through \cite[Ch. 5, Theorem 5.4]{gripenberg1990volterra}, {the resolvent is in $C^\infty(0,\infty)$}, with the local (in a neighborhoud of $0$) behavior of this kernel which follows that of the Abel kernel;
\end{example}  
\begin{example}
the fractional Mittag-Leffler kernels encountered in the study of fractional second order wave equations in complex media in~\cite{kaltenbacher2022limiting}
$$\genk =\frac{t^{\beta-1}}{\Gamma(1-\alpha)}E_{\alpha,\beta}(-t^\alpha),$$ 
with $0<\alpha\leq\beta\leq1$, where $E_{\alpha,\beta}$ is the two-parametric Mittag-Leffler function. The sufficient regularity of the resolvent is argued similarly to that of Example~\ref{ex:Abel_reg};
\end{example}  
\begin{example}\label{ex:poly_decay}
and the polynomially decaying kernel  \eqref{Polynomial_Kernel} arising in viscoelasticity, see, e.g.,~\cite{munoz1996decay}. The sufficient regularity of the resolvent is again argued by combining an argument on the global smoothness of the resolvent on $(0,\infty)$ and its local behavior near $0$.
\end{example} 

For the kernel Examples \ref{ex:exp}--\ref{ex:poly_decay}, the constant in Assumption~\ref{assu2} is given by $C_{\mathcal{A} 2 } =0$ thanks to Lemma~\ref{lemma:Zacher_adapted}.

\section{Proof of Theorem~\ref{thm:wellp}: Global existence of solutions}\label{Section_Local_Existence}
The aim of this section is to prove Theorem~\ref{thm:wellp}. Similarly to the study in \cite{meliani2023unified}, we take advantage of the fact that \eqref{eq:first} can be conveniently rewritten in the form
\begin{equation}\label{Wave_Equation}
(\tau (\genk\Lconv \psi_t)+ \psi)_{tt} - c^2\Delta ( \tau \genk \Lconv \psi_t +\psi) -(\gamma-c^2\tau)\genk\Lconv \Delta\psi_t- \nu \Delta\psi_{t}   = 0
\end{equation}
which is then effectively a wave equation for the unknown $\tau (\genk\Lconv \psi_t)+ \psi$ with extra terms that we intend to control using the assumptions on the kernel \ref{assu1} and \ref{assu2}.
\begin{proof}[Proof of Theorem \ref{thm:wellp}] 
	The well-posedness result follows from a standard Galerkin procedure along the lines of the one performed in \cite{meliani2023unified}. Details on the unique solvability of the semi-discrete problem are given in Appendix~\ref{appendix:solvability_semi_discrete}. To simplify the notation we omit the dependence of the solution $\psi$ on the Galerkin discretization parameter $n$ in the energy estimates below. 
	The proof will be divided into four steps.
	
	\begin{itemize}
		\item[(i)] We establish, on the approximate solutions given by the Galerkin procedure, \emph{a priori} estimates;
		\item[(ii)] We pass to the limit, thanks to compactness properties of the space of solutions. See Appendix~\ref{Sec:Preliminaries};
		\item[(iii)] We show that the solution obtained by the limiting procedure attains the initial data;
		\item[(iv)] We prove uniqueness of the solution in $H_\psi$.
	\end{itemize}
	
	\subsubsection*{A priori energy estimates}
	{In Step (i)}, we establish energy estimates on the approximate solutions obtained by the Galerkin method.
	Multiplying \eqref{Wave_Equation} by $\big(\tau \genk\Lconv \psi_t+ \psi\big)_t$ and integrating by parts over $\Omega$, we get 
	\begin{equation}\label{ddt_Energy}
		\begin{aligned}
			\ddt E[\psi](t)+\nu \|\nabla \psi_t\|_{L^2(\Om)}^2=&\,-(\gamma-c^2\tau) (\nabla \genk\Lconv \psi_t,\nabla \psi_t)_{L^2}\\
			&-\nu (\tau \nabla(\genk\Lconv \psi_t)_t, \nabla \psi_t )_{L^2},
		\end{aligned}  
	\end{equation}
	where the energy $E[\psi]$ is defined in \eqref{E_psi_def}. 
	Subsequently integrating in time on $(0,t)\subset(0,T)$ and using \ref{assu1} and \ref{assu2}, we obtain
	\begin{equation}\label{eq:energy_est_low}
		\begin{aligned}
			2 E[\psi](t)+ 2\nu \intt \|\nabla \psi_t\|_{L^2(\Om)}^2 \ds\leq&\,\|\tau\psitwo + \psi_1\|_{L^2(\Om)}^2+ c^2\| \nabla\psi_0\|_{L^2(\Om)}^2\\ 
			&+(c^2\tau^2  \|\genk\|_{\mm(\{0\})}^2 				  + \nu \tau C_{\mathcal{A}2})\, \| \nabla\psi_1\|_{L^2(\Om)}^2 .
		\end{aligned}  
	\end{equation}
	In order to be able to pass to the limit in the Galerkin procedure, we need to establish bounds for individual terms. To this end, consider the auxiliary time-evolution problem
	\[(\tau\genk \Lconv \psi_t)_t + \psi_t = z \quad \textrm{a.e. in } \Omega,\] with $z \in L^\infty(0,T;L^2(\Om))$. 
	Using \ref{assu3} and convolving with the resolvent measure $\tgenk$ yields the following Volterra integral equation of the second kind:
	\begin{equation}\label{eq:bootstrap_eq}
		(\tau+A) \psi_t+  {\mathsf{r}} \Lconv \psi_t = A z +  {\mathsf{r}} \Lconv z.
	\end{equation}
	Thus, according to existence theory of Volterra equations of the second kind~\cite[Ch. 2, Theorem 3.5]{gripenberg1990volterra}, equation \eqref{eq:bootstrap_eq} has a unique solution which satisfies:
	\begin{equation}
		\|\psi_t\|_{L^{\infty}(0,T;L^2(\Omega))} \lesssim \|z\|_{L^{\infty}(0,T;L^2(\Omega))},
	\end{equation}
	and subsequently 
	\begin{equation}
		\|\tau (\genk\Lconv \psi_t)_t\|_{L^{\infty}(0,T;L^2(\Omega))} \lesssim\|\psi_t\|_{L^{\infty}(0,T;L^2(\Omega))} + \|z\|_{L^{\infty}(0,T;L^2(\Omega))} 
		\lesssim \|z\|_{L^{\infty}(0,T;L^2(\Omega))}.
	\end{equation}
	If $\gamma > \tau c^2$, then a bound on $\|\genk\Lconv \nabla \psit(t)\|_{L^2(\Om)}$ is available in \eqref{eq:energy_est_low}, and the triangle inequality
	$$ \|\nabla \psi(t)\|_{L^2(\Om)} \lesssim \|\nabla(\tau (\genk\Lconv \psi_t)+\psi)\|_{L^2(\Om)}+ \|\nabla (\genk\Lconv \psi_t)\|_{L^2(\Om)}$$	
	allows us to get a bound  on $\|\nabla\psi(t)\|_{L^2(\Om)}$. If $\gamma = \tau c^2$, we can repeat the same argument as before by considering the auxiliary problem
	\[\tau\genk \Lconv \nabla\psi_t + \nabla\psi = y \quad\textrm{a.e. in } \Omega,\] with $y \in L^\infty(0,T;L^2(\Om))$.
	
	{Note that the right-hand side of 
		\begin{equation}
			(\tau (\genk\Lconv \psi_t)+ \psi)_{tt} = c^2\Delta ( \tau \genk \Lconv \psi_t +\psi) + (\gamma-c^2\tau)\genk\Lconv \Delta\psi_t + \nu \Delta\psi_{t}   
		\end{equation}
		is in $L^2(0,T;H^{-1}(\Omega))$.} This is thanks to the established estimates in \eqref{eq:energy_est_low} and the fact that the $H^{-1}(\Omega)$ norm of the Laplacian can be estimated as follows:
	\begin{equation}
		\|-\D u\|_{H^{-1}(\Omega)} =\sup_{\substack{v \in H_0^1(\Om) \\ \|v\|_{H_0^1(\Om)} \leq 1}} (-\D u, v)_{L^2(\Om)} = \sup_{\substack{v \in H_0^1(\Om) \\ \|v\|_{H_0^1(\Om)} \leq 1}} (\nabla u, \nabla v)_{L^2(\Om)} \leq \|\nabla u\|_{L^2(\Om)}.
	\end{equation}
	We then conclude that $\tau (\genk\Lconv \psi_t)_{tt} + \psitt \in L^2(0,T; H^{-1}(\Om))$.
	
	This implies that $\psi$ is bounded in $V_\genk(0,T; H^{-1}(\Om))$ and in turn 
	$(\genk\Lconv\psi_t)_{tt}$ and $\psi_{tt}$ are bounded in $L^p(0,T; H^{-1}(\Om))$ (with $p =\min \{2,q\}>1$)
	through the continuous embedding result given in Lemma~\ref{lem:continuous_embedding}. 
	
	\subsubsection*{Passage to the limit in the Galerkin procedure} The passage to the limit in the weak form is straightforward using the compactness properties of the spaces $Y_\genk^\infty$ and $H^1\cap X_\genk^\infty$, and $V_\genk$. Refer to \cite[Lemma 3]{meliani2023unified}, Lemma~\ref{Lemma:hybrid_Caputo_seq_compact}, and Lemma~\ref{lem:sq_compactness_highest_der} for respective compactness results.
	
	\subsubsection*{Attainment of initial data}
	In this part of the proof, we use the $\psi^n$ to emphasize the dependence of the solution at the semi-discrete level on the Galerkin discretization parameter $n$.
	
	Attainment of initial data is obtained by testing the weak form in the semi-discrete setting with $v\in C^\infty((0,T)\times \Omega)$ such that $v(T) =0$:
	\begin{equation}
		\begin{aligned}
			\begin{multlined}[t] 
				\intT \big<(\tau\genk * \psit^n)_{tt} , v\big>_{H^{-1}\times H^1}\ds + \intT \big<\psi^n_{tt}  , v\big>_{H^{-1}\times H^1}\ds + 
				c^2  \intT(\nabla \psi^n, \nabla v)_{L^2}\ds \\
				+\gamma \intT\big(\genk \Lconv \nabla \psit^n , \nabla v\big)_{L^2}\ds + \nu \intT\big(\nabla \psit^n , \nabla v\big)_{L^2}\ds 
				= 0, 
			\end{multlined}	
		\end{aligned}		
	\end{equation}
	where splitting of the first term is allowed given the combined smoothness and regularity of the test functions $v$ as well as the approximate solution $\psi^n$.
	Then using the idea of, e.g.,~\cite[Theorem 9.12]{salsa2016partial}, we integrate the first term in time and pass to the limit in the Galerkin discretization	using the sequential compactness of the unit ball in $L^p(0,T;H^{-1}(\Om))$ with $p>1$. This yields
	\begin{equation}
		\begin{aligned}
			\begin{multlined}[t] 
				-\intT \big<(\tau\genk * \psit)_{t} , v_t\big>_{H^{-1}\times H^1}\ds + \intT \big<\psi_{tt}  , v\big>_{H^{-1}\times H^1}\ds  
				+ c^2  \intT(\nabla \psi, \nabla v)_{L^2}\ds \\
				+\gamma \intT\big(\genk \Lconv \nabla \psit , \nabla v\big)_{L^2}\ds + \nu \intT\big(\nabla \psit , \nabla v\big)_{L^2}\ds 
				= - \big(\tau\psitwo , v(0)\big)_{L^2}.
			\end{multlined}	
		\end{aligned}		
	\end{equation}
	On the other hand, taking first the limit in the Galerkin procedure then integrating by parts in time in the first term yields
	\begin{equation}
		\begin{aligned}
			\begin{multlined}[t] 
				-\intT \big<(\tau\genk * \psit)_{t} , v_t\big>_{H^{-1}\times H^1}\ds + \intT \big<\psi_{tt}  , v\big>_{H^{-1}\times H^1}\ds  
				+ c^2  \intT(\nabla \psi, \nabla v)_{L^2}\ds \\
				+\gamma \intT\big(\genk \Lconv \nabla \psit , \nabla v\big)_{L^2}\ds + \nu \intT\big(\nabla \psit , \nabla v\big)_{L^2}\ds 
				= - \big((\tau\genk * \psit)_{t}(0) , v(0)\big)_{L^2},
			\end{multlined}	
		\end{aligned}		
	\end{equation}
	thus necessarily $\tau\psitwo=(\tau\genk * \psit)_{t}(0)$.
	The rest of the initial data are shown to be attained similarly. We omit the details here.
	\subsubsection*{Uniqueness} Uniqueness of the solution in $H_\psi$ is readily shown by assuming the initial data are given by $\psi_0 = \psi_1 = \psitwo = 0$. We want to show that necessarily $\psi = 0$ to prove uniqueness. 
	Following the approach of \cite[Theorem 7.2.4]{evans2010partial} where the linear wave equation is studied,
	we introduce valid test functions.
	Fix $ 0\leq t' \leq T$ and set 
	\begin{equation}
		w (t)=
		\left\{\begin{array}{ll}
			\int_t^{t'} \psi(s) \ds \quad  & \textrm{ if} \ \ 0\leq t\leq t'\\
			0 \quad  & \textrm{ if} \ \ t'\leq t\leq T.
		\end{array}\right.
	\end{equation} 
	We define the convolution-term analogous to $w$ as
	\begin{equation}
		w_{\genk} (t)= 
		\left\{\begin{array}{ll}
			\int_t^{t'} \genk\Lconv\psi_t(s) \ds \quad  & \textrm{ if} \ \ 0\leq t\leq t'\\
			0 \quad  & \textrm{ if} \ \ t'\leq t\leq T.
		\end{array}\right.
	\end{equation}  
	Both $w$, $w_{\genk} \in L^2(0,T;\Honezero)$ and are thus valid test functions in the sense of \eqref{eqn:ibvp_eqn}. The proof of uniqueness follows then similarly to \cite[Theorem 1]{meliani2023unified}. The details are omitted.
\end{proof}

\begin{remark}[On the case $\nu =0$]\label{rem:rem1} 
	It is possible to relax the condition $\nu > 0$ and allow for $\nu=0$. However, we would need $\genk$ to have a regular enough resolvent $\tgenk \in L^1_\textup{loc}(\R^+)$ so as to be able to use  the compactness result of~\cite[Lemma 1]{meliani2023unified} to pass to the limit in the Galerkin procedure. Additionally, to show attainment of the initial data, one would need to rely on the compact embedding result given in \cite[Lemma 2]{meliani2023unified} such that we would need $\tgenk \in L^q_\textup{loc}(\R^+)$ with $q>1$.
	In which case the unique solution of \eqref{Main_System} would exist in the space
	\begin{equation} 
		\{ \psi \in L^{\infty}(0,T; \Honezero)\,|\, \genk\Lconv\psit\in L^{\infty}(0,T; \Honezero), \ \psit \in Y_{\genk}^\infty(0,T; L^2(\Omega)) 
		\}
	\end{equation}
	similar to the statement of \cite[Proposition 2]{meliani2023unified}. 
\end{remark}

	\begin{remark}[$\tau$-uniformity of estimate \eqref{ineq:energy_wellp_thm}]
		The thermal relaxation parameter $\tau$ is small in practice and it is of interest to study the asymptotic behavior of the equation as $\tau \to 0^+$ along the lines of the results available in \cite{meliani2023unified,kaltenbacher2020vanishing,kaltenbacher2023vanishing}. The key then is establishing uniform-wellposedness estimates with respect to the parameter of interest. While the established estimate \eqref{ineq:energy_wellp_thm} holds for $\tau=0$, we note that we need $\tau+A \neq 0 $ in order to use Volterra equations existence theory to solve \eqref{eq:bootstrap_eq}.  In particular if $A=0$ (i.e., $\displaystyle\lim_{t\to0+}\genk =\infty$), the estimate is not $\tau$-uniform owing to the variation of constants formula in~\cite[Ch. 2, Theorem 3.5]{gripenberg1990volterra}.
	\end{remark}

Theorem~\ref{thm:wellp} provides an upper bound on the energy of the system by that of its initial state. One may even infer from \eqref{eq:energy_est_low} that since 
$$\lim_{T\to\infty}\nu \intT \|\nabla\psi_t\|^2_{L^2(\Om)}\ds<\infty,$$ 
then $\|\nabla\psi_t(t)\|_{L^2(\Om)} \to 0$ as $t\to \infty$ provided $\nu>0$. However, it neither guarantees vanishing of all components of the energy of the system nor provides a rate of decay. 

\section{Exponential decay of the energy}\label{sec:asympto_behav_1}\label{Section_Asymptotic_Behavior_1}
While Theorem~\ref{thm:wellp} provides local well-posedness for arbitrary final time $T>0$ with $\gamma \geq \tau c^2$, global well-posedness relies on the strict inequality $\gamma>\tau c^2$. This is to be expected from \cite{Pellicer:2021aa,kaltenbacher2011wellposedness} for the integer-order case. In the nonlocal case, we need to additionally assume $\psi_1 = 0$~\cite[Proposition 2]{meliani2023unified} (for the case $\nu=0$, but the testing there allows also for $\nu>0$). 

Our goal in this section is then to show, under the assumptions
\begin{equation}
	\nu > 0, \quad 
	\quad \gamma - \tau c^2>0, \quad  \textrm{and}\quad  \psi_t|_{t=0}=\psi_1=0,
\end{equation}
that the norm of some conveniently chosen unknowns converges to the steady state exponentially fast.
We prove this using the Lyapunov functional method. The idea is to introduce well designed functionals that are bounded by the energy norm and that capture the dissipation of some of the components of the energy \eqref{E_psi_def}. Through a weighted sum, we put these functionals together which   leads to a Lyapunov functional, denoted by $\mathcal{L} =\mathcal{L}(t)$, equivalent to the energy $E[\psi]=E[\psi](t)$, defined in \eqref{E_psi_def}
and which, we will prove, to satisfy  a differential inequality of the form 
\begin{equation}\label{Ineq_Lyap}
	e^{\lambda t}\mathcal{L}(t)+\int_0^t e^{\lambda s}E[\psi](s)\ds\lesssim E[\psi](0)\quad \textrm{for } t\geq 0, 
\end{equation} 
for some small positive constant $\lambda>0$. Clearly, proving \eqref{Ineq_Lyap} yields exponential decay of $E[\psi]$. Towards showing \eqref{Ineq_Lyap}, we need to extract enough decay from the kernel $\genk$. Specifically, we assume that \ref{assu4} and \ref{assu5} hold. We show in what follows how one can verify these assumptions for exponentially decaying kernels.

\subsection{Examples of kernels verifying Assumptions~\ref{Assumption_Kernel}--\ref{Assumption_Kernel_Exponential_Decay}}\label{Sec:Verifying_exponential_kernels}

We show here that we can construct a large family of satifactory kernels (with respect to {Set of Assumptions~}\ref{Assumption_Kernel_Exponential_Decay}). 
\begin{itemize}
	\item 
	Inspired by \cite{messaoudi2007global}, let $$\genk = k(t)\exp(-\beta t)$$ with $k \in L^1(\R^+)$, positive, nonincreasing, and convex, and let $\beta>0$. Then $\genk$ verifies \ref{assu4} for all $\lambda \leq 2\beta$. 
	Indeed, let $y\in L^2(0,t;L^2(\Om))$, we have
	\begin{align}
		\intt e^{\lambda s} (\genk\Lconv y, y)_{L^2}\ds &= \intt e^{\lambda s} \int^s_0 (k(s-\mu)e^{-\beta (s-\mu)} y(\mu), y(s))_{L^2}\textup{d}\,\mu\ds \\
		& = \intt \int^s_0 (k(s-\mu)e^{-(\beta-\lambda/2) (s-\mu)} e^{\lambda/2 \mu}y(\mu), e^{\lambda/2 s}y(s))_{L^2}\textup{d}\,\mu\ds \\
		&=\intt(k e^{-(\beta-\lambda/2) \, \cdot}\Lconv e^{\lambda/2\,\cdot}y,  e^{\lambda/2\,\cdot} y)_{L^2}\ds.
	\end{align}
	Notice that the kernel  $k(t)\exp(-(\beta-\lambda/2) t)$ verifies the assumptions of \cite[Theorem 2 (iii)]{staffans1976inequality}. We therefore obtain the inequality
	\begin{equation} \label{ineq:assu4_verif}
		\begin{aligned}
			\intt e^{\lambda s} (\genk\Lconv y, y)_{L^2}\ds &\geq \tilde c_\lambda \intt\|k e^{-(\beta-\lambda/2) \, \cdot}\Lconv e^{\lambda/2\,\cdot}y\|^2_{L^2}\ds \\ & =  \tilde{c}_\lambda \intt  e^{\lambda s}\|\genk\Lconv y\|_{L^2(\Om)}^2\ds,
		\end{aligned}
	\end{equation}
	with $\tilde c_\lambda>0$ depending on the kernel $k e^{-(\beta-\lambda/2)}$. The last equality follows from a direct computation.
	In the course of the proof of Proposition~\ref{thm:decay_all_quants}, we will have to adjust $\lambda$ so as to balance the energy of the system, therefore we need the coercivity constant $\tilde c_\lambda$ in \ref{ineq:assu4_verif}  to be made independent of $\lambda$. In order to ensure that, we rely on the fact that for all $0\leq\lambda\leq2\beta,$ we have
	\[0\leq k(t)e^{-(\beta-\lambda/2)t}\leq k(t) \in L^1(\R^+).\]
	Thus by Lebegue's dominated convergence theorem we get
	\[\|k(t)e^{-(\beta-\lambda/2)t} - k(t)e^{-\beta t}\|_{L^1(\R^+)}\xrightarrow[\lambda \to 0]{} 0.\]
	This ensures continuity of $\hat k_\lambda$, the Fourier transform of $k(t)e^{-(\beta-\lambda/2)t}$, with respect to the parameter $\lambda$. 
	Since 
	\[\tilde c_\lambda = \sup_{z\in \R} \, \frac{|\hat k_\lambda(z)|^2}{\Re \hat k_\lambda(z)},\]
	$\tilde c_\lambda$ depends continuously on $\lambda$. By picking 
	\[\tilde c = \min_{\lambda \in [0,2\beta]} \tilde c_\lambda,\]
	inequality \eqref{ineq:assu4_verif} yields 
	\begin{equation} 
		\intt e^{\lambda s} (\genk\Lconv y, y)_{L^2}\ds \geq \tilde{c} \intt  e^{\lambda s}\|(\genk\Lconv y)\|_{L^2(\Om)}^2\ds,
	\end{equation}
	for all $\lambda \in [0, 2\beta]$.
	
	\item 
	Similarly, writing
	\begin{align}
		\intt e^{\lambda s} (\genk\Lconv y_t, y)_{L^2}\ds &= \intt e^{\lambda s} \int^s_0 (k(s-\mu)e^{-\beta (s-\mu)} y_t(\mu), y(s))_{L^2}\textup{d}\,\mu\ds \\
		& = \intt \int^s_0 (k(s-\mu)e^{-(\beta-\lambda/2) (s-\mu)} e^{\lambda/2 \mu}y_t(\mu), e^{\lambda/2 s}y(s))_{L^2}\textup{d}\,\mu\ds \\
		& = \intt \int^s_0 (k(s-\mu)e^{-(\beta-\lambda/2) (s-\mu)} (e^{\lambda/2 \mu}y)_t(\mu), e^{\lambda/2 s}y(s))_{L^2}\textup{d}\,\mu\ds \\& \ \ - \frac \lambda2\intt e^{\lambda s} \int^s_0 (k(s-\mu)e^{-\beta (s-\mu)} y(\mu), y(s))_{L^2}\textup{d}\,\mu\ds,
	\end{align}
	we notice that the kernel
	$$t \mapsto k(t)\exp(-(\beta-\lambda/2) t)$$ verifies the assumption of \cite[Lemma 5.1]{kaltenbacher2022limiting} which yields \ref{assu5}.
\end{itemize}

Note that fractional kernels \eqref{Abel_Kernel} do not verify \ref{assu4} as per the Fourier transform analysis performed (with $\lambda =0$) in \cite{kaltenbacher2022limiting}.

The exponentially decaying kernels Example~\ref{ex:exp} and \ref{ex:Abel_reg} mentioned above fall under the category of tempered kernels which arise, e.g., when imposing a low frequency limit to fractional wave models; see~\cite{holm2023adding}.
\subsection{A first exponential energy inequality}

Our goal in this section is to construct a Lyapunov function that captures the dissipation of all the components in energy $E[\psi]= E[\psi](t)$. But first, let us state a lemma which gives us an energy inequality which we shall build on towards proving exponential decay of the energy. 
Below in Lemma \ref{Lemma_1_Decay}, we show that under the assumption $\gamma-\tau c^2>0$, the energy $E[\psi]$ (defined in \eqref{E_psi_def}) controls $ \|\nabla \psi\|_{L^2(\Om)}^2$.

\begin{lemma}\label{Lemma_1_Decay}
	Assume that $\gamma-\tau c^2>0$. Then, it holds that for all $t\geq 0$
	\begin{equation}\label{ineq:control_nabla_psit}
		\|\nabla \psi(t)\|_{L^2(\Om)}^2\lesssim E[\psi](t). 
	\end{equation}
\end{lemma}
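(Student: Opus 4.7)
The idea is simply to decompose $\nabla \psi$ as a difference of the two gradient quantities that are already controlled by $E[\psi]$, namely $\nabla(\tau\genk\Lconv\psi_t+\psi)$ and $\nabla(\genk\Lconv\psi_t)$. Writing
\[
\nabla\psi = \nabla(\tau\genk\Lconv\psi_t+\psi) - \tau\,\nabla(\genk\Lconv\psi_t),
\]
the triangle inequality followed by the elementary bound $(a+b)^2\leq 2a^2+2b^2$ yields
\[
\|\nabla\psi(t)\|_{L^2(\Om)}^2 \leq 2\|\nabla(\tau\genk\Lconv\psi_t+\psi)(t)\|_{L^2(\Om)}^2 + 2\tau^2\|\nabla(\genk\Lconv\psi_t)(t)\|_{L^2(\Om)}^2.
\]

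The first term on the right-hand side is bounded by $\tfrac{4}{c^2}E[\psi](t)$ since $c^2>0$, while the second term is rewritten as
\[
2\tau^2\|\nabla(\genk\Lconv\psi_t)(t)\|_{L^2(\Om)}^2 = \frac{2\tau}{\gamma-\tau c^2}\cdot \tau(\gamma-\tau c^2)\|\nabla(\genk\Lconv\psi_t)(t)\|_{L^2(\Om)}^2,
\]
which is bounded by $\frac{4\tau}{\gamma-\tau c^2}E[\psi](t)$. Adding the two gives
\[
\|\nabla\psi(t)\|_{L^2(\Om)}^2 \leq \left(\tfrac{4}{c^2}+\tfrac{4\tau}{\gamma-\tau c^2}\right) E[\psi](t),
\]
which is precisely \eqref{ineq:control_nabla_psit}.

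The only point requiring care is the \emph{strict} positivity $\gamma-\tau c^2>0$, which is used to divide by this coefficient when absorbing the $\|\nabla(\genk\Lconv\psi_t)\|_{L^2}^2$ contribution into $E[\psi]$; at the borderline case $\gamma=\tau c^2$ this term drops out of $E[\psi]$ and no control of $\|\nabla(\genk\Lconv\psi_t)\|_{L^2}^2$ (and therefore of $\|\nabla\psi\|_{L^2}^2$) would be available from the energy alone. Since the hypothesis of the lemma excludes this degenerate case, the argument goes through, and no further ingredients (not even the kernel assumptions) are needed.
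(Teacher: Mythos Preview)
Your proof is correct and follows essentially the same route as the paper: both decompose $\nabla\psi=\nabla(\tau\genk\Lconv\psi_t+\psi)-\tau\nabla(\genk\Lconv\psi_t)$ and apply the triangle inequality, using the strict positivity of $\gamma-\tau c^2$ to absorb the second term into $E[\psi]$. The only cosmetic difference is that the paper carries the factor $c^2$ through and splits into the cases $\gamma\geq 2\tau c^2$ and $\tau c^2<\gamma<2\tau c^2$ to obtain a slightly sharper explicit constant, whereas you bound each term separately and get the uniform constant $\tfrac{4}{c^2}+\tfrac{4\tau}{\gamma-\tau c^2}$.
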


\begin{proof}
	This can be shown by writing the identity $$ c\nabla \psi = c \nabla \psi + c \tau \nabla( \genk\Lconv \psi_t) - c \tau \nabla( \genk\Lconv  \psi_t)$$ and using the triangle inequlity to see that
	\begin{equation}
		\begin{aligned}
			c^2 \|\nabla \psi\|_{L^2(\Om)}^2  &\leq 2 c^2\|\nabla(\tau \genk\Lconv \psi_t+\psi)\|_{L^2(\Om)}^2+ 2 {\tau^2 c^2} \|\nabla \genk\Lconv \psi_t\|_{L^2(\Om)}^2 \\
			&\begin{multlined} \leq 2\Big(c^2\|\nabla(\tau \genk\Lconv \psi_t+\psi)\|_{L^2(\Om)}^2
				+ \tau(\gamma-\tau c^2) \|\nabla \genk\Lconv \psi_t\|_{L^2(\Om)}^2 \Big) \\+ 2  \tau (2\tau c^2 -\gamma)\|\nabla \genk\Lconv \psi_t\|_{L^2(\Om)}^2
			\end{multlined}
		\end{aligned}
	\end{equation}
	for all $t\geq0$.
	If $\gamma\geq 2 \tau c^2$, then $c^2 \|\nabla \psi\|_{L^2(\Om)}^2\leq 2 E[\psi]$ and we are done. In general, for $\gamma> \tau c^2$, we can write   
	\begin{equation}
		\begin{aligned}
			&c^2 \|\nabla \psi(t)\|_{L^2(\Om)}^2 \leq 2\Big(1 + \max\Big\{0,\frac{2\tau c^2 -\gamma}{\gamma-\tau c^2}\Big\}\Big) E[\psi](t) \quad \textrm{for}\quad  t\geq 0.
		\end{aligned}
	\end{equation}
	This yields \eqref{Lemma_1_Decay}. 
\end{proof}
\begin{lemma}\label{lem:main_est} 
	Suppose that Assumptions ~\ref{Assumption_Kernel}--\ref{Assumption_Kernel_Exponential_Decay} on the kernel hold. Assume in addition that 
	\eqref{ineqs:condition_coefficients} holds and that initial data are chosen as in Theorem~\ref{thm:wellp}, such  that $\psi_1=0$. Then, for all $t\geq 0$, we have  
	\begin{equation}\label{E_Main_Estimate}
		\begin{multlined}
			e^{\lambda t}E[\psi](t)+(\gamma-\tau c^2)\tilde{c}\int_0^t e^{\lambda s}\|\genk\Lconv \nabla \psi_t\|_{L^2(\Om)}^2\ds+ \nu \intt e^{\lambda s}\|\nabla \psi_t\|_{L^2(\Om)}^2\ds\\\lesssim E[\psi](0) + \lambda  \intt e^{\lambda s} E[\psi](s)\ds, 
		\end{multlined}
	\end{equation}
	for $0\leq\lambda\leq\lambda_{sup}$, where $\lambda_{sup}>0$ and $\tilde c>0$ are  given by the properties of the kernel through \ref{assu4} and \ref{assu5}. The hidden constant is independent of $t$. 
\end{lemma}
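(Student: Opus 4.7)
The strategy is to turn the differential energy identity \eqref{ddt_Energy} into the claimed weighted integral inequality using $e^{\lambda s}$ as a multiplier. First, I would multiply both sides of \eqref{ddt_Energy} by $e^{\lambda s}$ and integrate on $(0,t)$. The key algebraic step is the integration by parts in time
\[
\intt e^{\lambda s}\ddt E[\psi](s)\ds = e^{\lambda t}E[\psi](t) - E[\psi](0) - \lambda\intt e^{\lambda s}E[\psi](s)\ds,
\]
which produces exactly the $\lambda\intt e^{\lambda s}E[\psi](s)\ds$ term that must appear on the right-hand side of the conclusion. After moving both convolution-type terms on the right of \eqref{ddt_Energy} to the left, the identity takes the form
\[
\begin{aligned}
&e^{\lambda t}E[\psi](t) + \nu\intt e^{\lambda s}\|\nabla\psi_t\|_{L^2(\Om)}^2\ds \\
&\quad + (\gamma-\tau c^2)\intt e^{\lambda s}(\nabla\genk\Lconv\psi_t,\nabla\psi_t)_{L^2}\ds + \nu\tau\intt e^{\lambda s}(\nabla(\genk\Lconv\psi_t)_t,\nabla\psi_t)_{L^2}\ds \\
&= E[\psi](0) + \lambda\intt e^{\lambda s}E[\psi](s)\ds.
\end{aligned}
\]
It then remains to extract positive information from the two convolution terms on the left.

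Next, I would apply the kernel assumptions. For the first convolution term, \ref{assu4} used with $y = \nabla\psi_t$ yields the coercive bound
\[
(\gamma-\tau c^2)\intt e^{\lambda s}(\nabla\genk\Lconv\psi_t,\nabla\psi_t)_{L^2}\ds \geq (\gamma-\tau c^2)\tilde c\intt e^{\lambda s}\|\genk\Lconv\nabla\psi_t\|_{L^2(\Om)}^2\ds,
\]
which is precisely the coercive term appearing in the target estimate. For the second convolution term, I would use the hypothesis $\psi_t(0) = \psi_1 = 0$ (as flagged by Remark~1 after Theorem~\ref{thm:decay_Lyap}) to commute the time derivative with the convolution and write $(\genk\Lconv\psi_t)_t = \genk\Lconv\psi_{tt}$, invoking \cite[Ch.~3, Corollary~7.3]{gripenberg1990volterra}. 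Then \ref{assu5}, applied with $y = \nabla\psi_t$ (which vanishes at $s=0$), gives
\[
\nu\tau\intt e^{\lambda s}(\genk\Lconv\nabla\psi_{tt},\nabla\psi_t)_{L^2}\ds \geq -\frac{\nu\tau\lambda}{2}\intt e^{\lambda s}(\genk\Lconv\nabla\psi_t,\nabla\psi_t)_{L^2}\ds.
\]

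The remaining task, which I expect to be the main obstacle, is to absorb the residual cross term $\frac{\nu\tau\lambda}{2}\intt e^{\lambda s}(\genk\Lconv\nabla\psi_t,\nabla\psi_t)_{L^2}\ds$. I would use Young's inequality to split it into a piece controlled by $\|\genk\Lconv\nabla\psi_t\|_{L^2(\Om)}^2$ and one controlled by $\|\nabla\psi_t\|_{L^2(\Om)}^2$. The first piece is then absorbed into the coercive $(\gamma-\tau c^2)\tilde c$-term on the left, and the second into the $\nu\intt e^{\lambda s}\|\nabla\psi_t\|^2\ds$ dissipation on the left. Both absorptions require $\lambda \leq \lambda_{sup}$ with $\lambda_{sup}$ taken small enough in terms of $\nu$, $\tau$, $\gamma-\tau c^2$ and $\tilde c$; the main bookkeeping difficulty is to verify that such a positive $\lambda_{sup}$ exists that is also compatible with the upper bound coming from \ref{assu4}--\ref{assu5}. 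Once this is secured, the conclusion \eqref{E_Main_Estimate} follows with a constant (hidden in $\lesssim$) depending on the equation parameters and the kernel but independent of $t$.
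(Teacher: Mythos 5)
Your proposal is correct and follows essentially the same route as the paper: weight the energy identity \eqref{ddt_Energy} by $e^{\lambda s}$, integrate by parts in time to produce the $\lambda\intt e^{\lambda s}E[\psi](s)\ds$ term, apply \ref{assu4} to the $(\gamma-\tau c^2)$-term and (using $\psi_1=0$ to commute the derivative with the convolution) \ref{assu5} to the $\nu\tau$-term, and absorb the residual cross term for $\lambda\leq\lambda_{sup}$ small enough. The only point worth adding is that, as the paper notes, the testing must be justified at the semi-discrete (Galerkin) level because $(\genk\Lconv\psi_t)_t$ lacks the regularity to be a valid test function; your final absorption step, which the paper leaves implicit, is handled correctly.
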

\begin{remark}
	Note that if $A \neq 0$ and $\|\genk\|_{\mm(\{0\})} = 0$, then setting $\psi_1 =0 $ imposes, through the statement of Theorem~\ref{thm:wellp} that $\psitwo =0 $ as well. 
\end{remark}
\begin{proof}[Proof of Lemma \ref{lem:main_est}]
	We intend here to test with $e^{\lambda t}\big(\tau \genk\Lconv \psi_t+ \psi\big)_t$. However, the term $\tau (\genk\Lconv \psi_t)_t$ lacks the necessary regularity in time to be a valid test function in the sense of \eqref{eqn:ibvp_eqn}. Therefore, similarly to the idea of, e.g., \cite[Theorem 8.3]{conti2021Moore}, we will carry out the testing in a  semi-discrete setting, such that the differential inequality~\eqref{E_Main_Estimate} will make sense at first for the Galerkin approximations of the equation. Taking the limit in the Galerkin procedure will then ensure that the inequality also holds for the solution of the infinite dimensional problem. 
	
	Thus, in a sufficiently regular Galerkin setting, multiplying \eqref{Wave_Equation} by $e^{\lambda t}\big(\tau \genk\Lconv \psi_t+ \psi\big)_t$  and integrating (by parts) over $\Omega$, we get 
	\begin{equation}\label{eq:Main_intermediate_1}
		\begin{aligned}
			e^{\lambda t}\frac{1}{2}\ddt E[\psi](t)+\nu e^{\lambda t} \|\nabla \psi_t\|_{L^2(\Om)}^2 + e^{\lambda t} (\gamma-c^2\tau) (\nabla (\genk\Lconv \psi_t),\nabla \psi_t)_{L^2}\\
			\hphantom{\nu e^{\lambda t} \|\nabla \psi_t\|_{L^2(\Om)}^2 + e^{\lambda t} (\gamma-c^2\tau) (\nabla (\genk\Lconv \psi_t),\nabla \psi_t)}+\nu e^{\lambda t}(\tau (\nabla(\genk\Lconv \psi_t)_t, \nabla \psi_t)_{L^2} =0.
		\end{aligned}  
	\end{equation}
	Integrating in time on $(0,t)$ the first term yields
	\begin{equation}
		\begin{multlined}
			\intt e^{\lambda s}\ddt E[\psi](s)\ds = 
			\intt \ddt e^{\lambda s} E[\psi](s)\ds 
			-\lambda \intt e^{\lambda s} E[\psi](s)\ds.
		\end{multlined}
	\end{equation}
	Using Assumption \ref{assu4}, we can treat the third term in \eqref{eq:Main_intermediate_1} 
	\begin{equation}
		\intt e^{\lambda s} (\gamma-c^2\tau) (\nabla \genk\Lconv \psi_t,\nabla \psi_t)_{L^2} \ds \geq (\gamma-c^2\tau) \tilde c\intt e^{\lambda s} \|\genk\Lconv \nabla\psi_t\|_{L^2(\Om)}^2 \ds.
	\end{equation}  
	While for the last term, we use the fact that $\psi_1=0$ (ensuring $\nabla(\genk\Lconv \psi_t)_t = \nabla(\genk\Lconv \psitt)$)
	and then \ref{assu5} to conclude
	\begin{equation}\label{why_psi1_zero}
		\begin{aligned}
			\intt \nu e^{\lambda s}(\tau \nabla(\genk\Lconv \psi_t)_t, \nabla \psi_t)_{L^2}\ds &=\, \intt \nu e^{\lambda s}(\tau \nabla\genk\Lconv \psitt, \nabla \psi_t)_{L^2}\ds \\ & \geq -\nu \frac\lambda2 \int_0^t e^{\lambda s} (\genk\Lconv \nabla \psi_t, \nabla \psi_t)_{L^2}\ds.
		\end{aligned}
	\end{equation}
	
	This yields the desired result in the semi-discrete setting. Taking the limit in the Galerkin procedure yields~\eqref{E_Main_Estimate}.
\end{proof}

\subsection{Constructing the Lyapunov functional}
If we are able to control the right-hand-side term $\lambda  \intt e^{\lambda s} E[\psi](s)\ds$, then inequality~\eqref{E_Main_Estimate} would give us exponential decay of the energy. However the left-hand-side of \eqref{E_Main_Estimate} only ensures exponential control of one component of $\intt e^{\lambda s} E[\psi](s)\ds$, namely,
$\intt e^{\lambda s}\|\nabla \genk \Lconv \psi_t\|_{L^2(\Om)}^2\ds$. To be able to control the other components of the energy, we need to define two functionals, the first of which is
\begin{equation}
	F_1(t)=\int_{\Omega} \Big((\tau (\genk\Lconv \psi_t)_t+\psi_t)(\tau \genk\Lconv \psi_t+\psi)\Big)(t)\dx \quad \textrm{for}\quad  t\geq 0. 
\end{equation}
This functional is designed to give us control over the component $\|\nabla(\tau \genk\Lconv \psi_t+\psi)\|_{L^2(\Om)}^2$ of the energy $E[\psi]$.
\begin{lemma}\label{lem:F1_est}
	Let the assumptions of Lemma~\ref{lem:main_est} hold. Then, for all $t\geq 0$, it holds that
	\begin{equation}\label{Est_F_1_E}
		|F_1(t)|\lesssim E[\psi](t) 
	\end{equation} 
	and 
	\begin{equation}\label{F_1_Main_Est_2}
		\begin{aligned}
			& e^{\lambda t }F_1(t)\,+\,(c^2-\varepsilon_1)\int_0^t e^{\lambda s } \|\nabla(\tau \genk\Lconv \psi_t+\psi)\|_{L^2(\Om)}^2\ds\\
			\lesssim& \,E[\psi](0)+\lambda \int_0^t e^{\lambda s} E[\psi](s)\ds \\
			&+ C(\varepsilon_1) \intt e^{\lambda s } \|\nabla \genk\Lconv \psi_t\|_{L^2(\Om)}^2 \ds+ \intt e^{\lambda s }\|\tau (\genk\Lconv \psi_t)_t+\psi_t\|_{L^2(\Om)}^2 \ds \\ &+C(\varepsilon_1 ) \intt e^{\lambda s } \|\nabla \psi_t\|_{L^2(\Om)}^2\ds
		\end{aligned}
	\end{equation}\\
	for $0\leq\lambda\leq\lambda_{sup}$ for some $\lambda_{sup}>0$. Here $\varepsilon_1>0$ is an arbitrary constant. The hidden constant is independent of $t$.
\end{lemma}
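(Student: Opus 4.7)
The plan has two stages. For the pointwise bound \eqref{Est_F_1_E}, I would apply Cauchy--Schwarz and Poincaré's inequality (using that $\tau \genk\Lconv \psi_t + \psi$ vanishes on $\partial\Omega$), followed by Young's inequality, to obtain
\begin{equation*}
|F_1(t)| \leq \|\tau(\genk\Lconv \psi_t)_t + \psi_t\|_{L^2(\Om)} \, \|\tau \genk\Lconv \psi_t + \psi\|_{L^2(\Om)} \lesssim \|\tau(\genk\Lconv \psi_t)_t + \psi_t\|_{L^2(\Om)}^2 + \|\nabla(\tau \genk\Lconv \psi_t + \psi)\|_{L^2(\Om)}^2,
\end{equation*}
and both summands are visibly controlled by $E[\psi](t)$ via the definition \eqref{E_psi_def}.

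For the differential inequality \eqref{F_1_Main_Est_2}, the idea is to differentiate $F_1$ in time, substitute the equation \eqref{Wave_Equation} to eliminate the second-time-derivative factor, multiply by $e^{\lambda s}$, and integrate on $(0,t)$. As in Lemma~\ref{lem:main_est}, this formal computation will be carried out at the Galerkin level and then transferred to the limit using the same compactness/weak-convergence tools employed in the proof of Theorem~\ref{thm:wellp}. Direct differentiation yields
\begin{equation*}
\ddt F_1(t) = \big((\tau \genk\Lconv \psi_t + \psi)_{tt}, \, \tau \genk\Lconv \psi_t + \psi\big)_{L^2} + \|\tau(\genk\Lconv \psi_t)_t + \psi_t\|_{L^2(\Om)}^2,
\end{equation*}
and inserting \eqref{Wave_Equation} followed by integration by parts in space (using the Dirichlet condition on $\psi$) produces the principal dissipative term $-c^2\|\nabla(\tau \genk\Lconv \psi_t + \psi)\|_{L^2(\Om)}^2$ together with two cross terms, in which $\nabla(\tau \genk\Lconv \psi_t + \psi)$ is paired against $\genk\Lconv \nabla\psi_t$ (with coefficient $-(\gamma - \tau c^2)$) and against $\nabla\psi_t$ (with coefficient $-\nu$).

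After multiplying by $e^{\lambda s}$ and integrating, I would use the product rule $\ddt(e^{\lambda s}F_1(s)) = e^{\lambda s}\ddt F_1(s) + \lambda e^{\lambda s}F_1(s)$ to telescope the left-hand side into $e^{\lambda t}F_1(t) - F_1(0) - \lambda \int_0^t e^{\lambda s}F_1(s)\ds$. The two cross terms will then be absorbed via Young's inequality with parameter $\varepsilon_1 > 0$: each contributes a piece of size $\frac{\varepsilon_1}{2}\|\nabla(\tau \genk\Lconv \psi_t + \psi)\|_{L^2(\Om)}^2$ that is moved to the left-hand side (so that the coefficient of the leading gradient dissipation drops from $c^2$ to $c^2 - \varepsilon_1$), together with a $C(\varepsilon_1)$ factor multiplying either $\|\genk\Lconv \nabla\psi_t\|_{L^2(\Om)}^2$ or $\|\nabla\psi_t\|_{L^2(\Om)}^2$. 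The leftover $\lambda \int_0^t e^{\lambda s}F_1(s)\ds$ will be majorized by $\lambda \int_0^t e^{\lambda s}E[\psi](s)\ds$ through the already-established bound \eqref{Est_F_1_E}. The initial value collapses pleasantly: since $\psi_1 = 0$ and $(\genk\Lconv \psi_t)(0) = 0$ by the convention on Laplace convolutions, one has $F_1(0) = \tau(\psitwo,\psi_0)_{L^2}$, which is $\lesssim E[\psi](0)$ by Young's and Poincaré's inequalities.

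The main technical hurdle I anticipate is the regularity issue already encountered in Lemma~\ref{lem:main_est}: the factor $(\tau(\genk\Lconv \psi_t)_t + \psi_t)_t$ implicit in $\ddt F_1$ is not \emph{a priori} meaningful for the limit solution $\psi \in H_\psi$. The resolution is to perform the computation on Galerkin approximations, where all functions are smooth in time, and then pass to the limit in the resulting integral inequality rather than in its differential form.
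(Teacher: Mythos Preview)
Your proposal is correct and follows essentially the same approach as the paper's proof: differentiate $F_1$, substitute \eqref{Wave_Equation}, integrate by parts in space to expose the $-c^2\|\nabla(\tau\genk\Lconv\psi_t+\psi)\|_{L^2}^2$ term and the two cross terms, absorb the latter via Young's inequality with parameter $\varepsilon_1$, then multiply by $e^{\lambda s}$, integrate in time, and invoke \eqref{Est_F_1_E} to control both $F_1(0)$ and the $\lambda\int_0^t e^{\lambda s}F_1(s)\ds$ remainder. The paper likewise carries out the computation at the Galerkin level to sidestep the regularity obstacle you flag.
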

\begin{proof}
	Inequality \eqref{Est_F_1_E} holds 
	by using Young's inequality together with Poincar\'e's inequality. 
	Next, to prove \eqref{F_1_Main_Est_2},   we compute  the derivative of $F_1(t)$ with respect to time by making use of \eqref{Wave_Equation},  we obtain 
	\begin{equation}
		\begin{aligned}
			\ddt F_1(t)+c^2\|\nabla(\tau \genk\Lconv \psi_t&+\psi)\|_{L^2(\Om)}^2=\|\tau (\genk\Lconv \psi_t)_t+\psi_t\|_{L^2(\Om)}^2\\
			+&\,\int_{\Omega}\Big((\gamma-c^2\tau)\Delta(\genk\Lconv \psi_t)+ \nu \Delta\psi_{t}\Big)(\tau (\genk\Lconv \psi_t)+\psi)\dx.
		\end{aligned}
	\end{equation}  
	Again, this makes sense only in the semi-discrete setting, which we use as a regularization method for the argument. 
	Integrating by parts, we then obtain
	\begin{equation}\label{F_Est_1_2}
		\begin{aligned}
			&\ddt F_1(t)+c^2\|\nabla(\tau (\genk\Lconv \psi_t)+\psi)\|_{L^2(\Om)}^2\\
			=&\,\|\tau (\genk\Lconv \psi_t)_t+\psi_t\|_{L^2(\Om)}^2
			-\nu \int_{\Omega}\nabla \psi_{t}\cdot\nabla(\tau \genk\Lconv \psi_t+\psi)\dx\\
			& - (\gamma-c^2\tau)\int_{\Om}  (\genk\Lconv \nabla \psit) \cdot \nabla (\tau \genk\Lconv \psi_t+\psi)\dx .
		\end{aligned}
	\end{equation}
	We estimate the last two terms as  
	\begin{equation}
		\begin{multlined}
			\left| (\gamma-c^2\tau)\int_{\Om}  (\genk\Lconv \nabla \psit) \cdot \nabla (\tau \genk\Lconv \psi_t+\psi)\dx\right|\\ \hphantom{\left| \int_{\Om}   (\tau \genk\Lconv \psi_t)+\psi)\dx\right|}\leq \frac{\varepsilon_1}{2}\|\nabla(\tau (\genk\Lconv \psi_t)+\psi)\|_{L^2(\Om)}^2 +C(\varepsilon_1)\|\nabla (\genk\Lconv \psi_t)\|_{L^2(\Om)}^2 , 
		\end{multlined}
	\end{equation}
	and 
	\begin{equation}
		\left|\nu \int_{\Omega}\nabla \psi_{t} \cdot \nabla(\tau \genk\Lconv \psi_t+\psi)\dx\right|\leq\,\frac{\varepsilon_1}{2} \|\nabla(\tau \genk\Lconv \psi_t+\psi)\|_{L^2(\Om)}^2+C(\varepsilon_1 ) \|\nabla \psi_t\|_{L^2(\Om)}^2,
	\end{equation}
	where $\varepsilon_1>0$ is an arbitrarily small constant.
	
	Plugging the above two estimates into \eqref{F_Est_1_2}, we obtain  
	\begin{equation}\label{F_Est_1_3}
		\begin{aligned}
			&\ddt F_1(t)+(c^2-\varepsilon_1)\|\nabla(\tau \genk\Lconv \psi_t+\psi)\|_{L^2(\Om)}^2\\
			\leq &\,\|\tau (\genk\Lconv \psi_t)_t+\psi_t\|_{L^2(\Om)}^2
			+C(\varepsilon_1)\|\nabla \genk\Lconv \psi_t\|_{L^2(\Om)}^2+C(\varepsilon_1 ) \|\nabla \psi_t\|_{L^2(\Om)}^2. 
		\end{aligned}
	\end{equation}
	Multiplying \eqref{F_Est_1_3} by $e^{\lambda s}$ and integrating with respect to time on $(0,t)$, and using \eqref{Est_F_1_E}, leads to \eqref{F_1_Main_Est_2}.
\end{proof}

Next, to capture the dissipation term $\|\tau (\genk\Lconv \psi_t)_t+\psi_t\|_{L^2(\Om)}^2$, we define 
\begin{equation} \label{eq:functional_F2}
	F_2(t)=-\int_{\Omega}\tau \genk\Lconv \psi_t \, (\tau (\genk\Lconv \psi_t)_t+\psi_t)(t)\dx \quad \textrm{for}\quad  t\geq 0.
\end{equation}
The functional $F_2$ satisfies the following lemma.
\begin{lemma}\label{lem:F2_est}
	Let the assumptions of Lemma~\ref{lem:main_est} hold. Then, for all $t\geq 0$, it holds that
	\begin{equation}\label{Est_F_2_E}
		|F_2(t)|\lesssim E[\psi](t) 
	\end{equation} 
	and
	\begin{equation}\label{F_2_Main}
		\begin{aligned}
			&e^{\lambda t}F_2(t)+(1-\varepsilon_2)\int_0^t e^{\lambda s} \|\tau (\genk\Lconv \psi_t)_t+\psi_t\|_{L^2(\Om)}^2\ds \\
			\lesssim&\, E[\psi](0)+\lambda \int_0^t e^{\lambda s} E[\psi](s)\ds+ C(\varepsilon_2, \varepsilon_3) \intt e^{\lambda s}\|\nabla \psi_t\|_{L^2(\Om)}^2 \ds\\
			&+ C(\varepsilon_3) \intt e^{\lambda s} \|\nabla \genk\Lconv \psi_t\|_{L^2(\Om)}^2 \ds 
			+ \varepsilon_3 \intt e^{\lambda s} \|\nabla(\tau \genk\Lconv \psi_t+\psi)\|_{L^2(\Om)}^2\ds
		\end{aligned}
	\end{equation}
	for $0\leq\lambda\leq\lambda_{sup}$ for some $\lambda_{sup}>0$, and $\varepsilon_2, \varepsilon_3>0$ are arbitrary. The hidden constant is independent of $t$.
	
\end{lemma}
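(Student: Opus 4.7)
The plan is to mirror the approach used in the proof of Lemma~\ref{lem:F1_est}: perform the differentiation of $F_2$ at the Galerkin semi-discrete level so that substituting from \eqref{Wave_Equation} is legitimate, and then transfer the resulting differential inequality to $\psi$ by passing to the limit. The bound \eqref{Est_F_2_E} is immediate from Cauchy--Schwarz and Young's inequality applied to the two factors in the definition of $F_2$: the factor $\|\tau(\genk\Lconv\psi_t)_t+\psi_t\|_{L^2(\Om)}$ appears verbatim in $E[\psi]$, while $\|\tau\genk\Lconv\psi_t\|_{L^2(\Om)}$ is controlled by $\|\tau\nabla\genk\Lconv\psi_t\|_{L^2(\Om)}$ via Poincaré's inequality, and the latter sits inside $E[\psi]$ thanks to the assumed strict positivity of $\gamma-\tau c^2$.

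For \eqref{F_2_Main}, I would differentiate $F_2$ in time and use the identity $(\tau(\genk\Lconv\psi_t)_t+\psi_t)_t=(\tau\genk\Lconv\psi_t+\psi)_{tt}$ together with \eqref{Wave_Equation} to substitute the second time derivative. After integrating by parts on the three Laplacian terms (the homogeneous Dirichlet boundary condition on $\psi$ killing the boundary contributions), this produces
\begin{equation*}
\begin{aligned}
\ddt F_2(t) =\,& -\int_\Om (\tau\genk\Lconv\psi_t)_t\bigl(\tau(\genk\Lconv\psi_t)_t+\psi_t\bigr)\dx + (\gamma-c^2\tau)\tau\|\nabla\genk\Lconv\psi_t\|_{L^2(\Om)}^2\\
&+ c^2\int_\Om \tau\,\nabla\genk\Lconv\psi_t\cdot\nabla(\tau\genk\Lconv\psi_t+\psi)\dx + \nu\int_\Om \tau\,\nabla\genk\Lconv\psi_t\cdot\nabla\psi_t\dx.
\end{aligned}
\end{equation*}
The key algebraic observation is the pointwise identity
\begin{equation*}
-(\tau\genk\Lconv\psi_t)_t\,\bigl(\tau(\genk\Lconv\psi_t)_t+\psi_t\bigr) = -\bigl(\tau(\genk\Lconv\psi_t)_t+\psi_t\bigr)^{2} + \psi_t\bigl(\tau(\genk\Lconv\psi_t)_t+\psi_t\bigr),
\end{equation*}
which isolates the desired dissipative term $-\|\tau(\genk\Lconv\psi_t)_t+\psi_t\|_{L^2(\Om)}^2$ on the right-hand side, up to a harmless cross term.

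The remaining estimates are routine. The cross term $\int_\Om \psi_t(\tau(\genk\Lconv\psi_t)_t+\psi_t)\dx$ is split by Young's inequality with parameter $\varepsilon_2$ into a fraction of the dissipative norm and a contribution $C(\varepsilon_2)\|\psi_t\|_{L^2(\Om)}^2$ controlled by $\|\nabla\psi_t\|_{L^2(\Om)}^2$ through Poincaré; the $c^2$-term is split via Young with $\varepsilon_3$ into $\varepsilon_3\|\nabla(\tau\genk\Lconv\psi_t+\psi)\|_{L^2(\Om)}^2$ and $C(\varepsilon_3)\|\nabla\genk\Lconv\psi_t\|_{L^2(\Om)}^2$; the $(\gamma-c^2\tau)\tau$-term is of the form $\|\nabla\genk\Lconv\psi_t\|_{L^2(\Om)}^2$ and is absorbed into the $C(\varepsilon_3)$ constant; and the $\nu$-term is handled by Young's inequality with $\varepsilon_3$, producing additional $\|\nabla\psi_t\|_{L^2(\Om)}^2$ and $\|\nabla\genk\Lconv\psi_t\|_{L^2(\Om)}^2$ contributions and explaining the joint $C(\varepsilon_2,\varepsilon_3)$ dependence. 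Multiplying the resulting pointwise differential inequality by $e^{\lambda s}$, integrating on $(0,t)$, using
\begin{equation*}
\intt e^{\lambda s}\ddt F_2(s)\ds = e^{\lambda t}F_2(t) - F_2(0) - \lambda\intt e^{\lambda s}F_2(s)\ds,
\end{equation*}
and invoking \eqref{Est_F_2_E} on $F_2(0)$ and on $F_2(s)$ inside the integral, then relabeling the small parameter, yields \eqref{F_2_Main}. The only delicate step, already present in Lemmas~\ref{lem:main_est}--\ref{lem:F1_est}, is that $(\genk\Lconv\psi_t)_{tt}$ is only pointwise available in time at the Galerkin level; the inequality is therefore first established for the finite-dimensional approximations and transferred to $\psi$ by the same limiting argument used for the energy identity, which is where I expect the main bookkeeping effort to lie.
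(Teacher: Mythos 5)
Your proposal is correct and follows essentially the same route as the paper: the same algebraic splitting $-\tau(\genk\Lconv\psi_t)_t\,(\tau(\genk\Lconv\psi_t)_t+\psi_t)=-(\tau(\genk\Lconv\psi_t)_t+\psi_t)^2+\psi_t(\tau(\genk\Lconv\psi_t)_t+\psi_t)$, the same substitution of \eqref{Wave_Equation} after integrating by parts, the same Young/Poincar\'e estimates with parameters $\varepsilon_2,\varepsilon_3$, and the same $e^{\lambda s}$-weighted time integration combined with \eqref{Est_F_2_E}, all justified at the Galerkin level as in Lemmas~\ref{lem:main_est}--\ref{lem:F1_est}. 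No gaps.
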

\begin{proof}
	The estimate \eqref{Est_F_2_E} is obvious. Next, to show \eqref{F_2_Main}, we take the derivative of $F_2$ with respect to $t$, using \eqref{Wave_Equation} we obtain 
	\begin{equation}
		\begin{aligned}
			\ddt F_2(t)=&\,-\int_{\Omega}\tau (\genk\Lconv \psi_t)_t (\tau (\genk\Lconv \psi_t)_t+\psi_t)\dx-\int_{\Omega}\tau (\genk\Lconv \psi_t) (\tau (\genk\Lconv \psi_t)_t+\psi_t)_{t}\dx\\
			=&\,-\int_{\Omega}|\tau (\genk\Lconv \psi_t)_t+\psi_t|^2\dx+\int_{\Omega}\psi_t (\tau (\genk\Lconv \psi_t)_t+\psi_t)\dx\\
			&+ \tau c^2\int_{\Omega}\nabla ( \tau \genk \Lconv \psi_t +\psi)\cdot \nabla(\genk\Lconv \psi_t)\dx +\tau\int_\Omega(\gamma-c^2\tau)|\nabla(\genk\Lconv \psi_t)|^2\dx\\
			&+\tau \nu \int_\Omega\nabla \psi_t \cdot \nabla(\genk\Lconv \psi_t) \dx. 
		\end{aligned}
	\end{equation}
	Applying Young's inequality and Poincar\'e's inequality, we have for any $\varepsilon_2, \varepsilon_3>0$ 
	\begin{equation}
		\begin{aligned}
			&\ddt F_2(t)+(1-\varepsilon_2)\|\tau (\genk\Lconv \psi_t)_t+\psi_t\|_{L^2(\Om)}^2\\
			\leq &\,C(\varepsilon_2, \varepsilon_3)\|\nabla \psi_t\|_{L^2(\Om)}^2+\varepsilon_3\|\nabla(\tau \genk\Lconv \psi_t+\psi)\|_{L^2(\Om)}^2+C(\varepsilon_3)\|\nabla \genk\Lconv \psi_t\|_{L^2(\Om)}^2. 
		\end{aligned}       
	\end{equation}
	Multiplying by $e^{\lambda s}$ then integrating with respect to time on $(0,t)$, and using \eqref{Est_F_2_E}, we obtain \eqref{F_2_Main}. 
\end{proof}

Taking advantage of Lemmas~\ref{lem:main_est}, \ref{lem:F1_est}, and \ref{lem:F2_est}, we are now in a position to define a suitable Lyapunov functional $\mathcal{L}$ as 
\begin{equation}\label{Lyapunov}
	\mathcal{L}(t):=N_0 E[\psi](t)+F_1(t)+N_1F_2(t) \quad \textrm{for}\quad  t\geq 0,
\end{equation} 
where $N_0$ and $N_1$ are large positive constants that will be fixed later.  

First, it is clear that for $N_0$ large enough, there exists two constants $c_1, c_2>0$ such that for all $t\geq 0$, it holds that 
\begin{equation}\label{Equiv_E_L}
	c_1E(t)\leq \mathcal{L}(t)\leq c_2E[\psi](t). 
\end{equation}
Indeed, we know from \eqref{Lyapunov}, and by using \eqref{Est_F_1_E} and \eqref{Est_F_2_E}, that 
\begin{equation}
	\begin{aligned}
		|\mathcal{L}(t)-N_0E[\psi](t)|\leq&\, |F_1(t)|+N_1|F_2(t)|\\
		\leq&\, c_0E[\psi](t), 
	\end{aligned}
\end{equation}
with $c_0>0$. Hence, for $N_0$ large enough, we can take $c_1=N_0-c_0$ and $c_2=N_0+c_0$ and then \eqref{Equiv_E_L} holds.
\subsection{Proof of Theorem~\ref{thm:decay_Lyap}}
We are now ready to prove Theorem~\ref{thm:decay_Lyap}.
Using \eqref{Lyapunov} together with the estimates \eqref{E_Main_Estimate},  \eqref{F_1_Main_Est_2} and \eqref{F_2_Main}, we obtain 
\begin{equation}  
	\begin{aligned}\label{eq:intermediary}
		&e^{\lambda t}\mathcal{L}(t)+\Big(N_0(\gamma-\tau c^2)\tilde{c}-C(\varepsilon_1)-N_1C(\varepsilon_3)\Big)\intt e^{\lambda s} \|(\genk\Lconv \nabla \psi_t)\|_{L^2(\Om)}^2\ds\\
		&+ \Big(N_0\nu- C(\varepsilon_1) - N_1 C(\varepsilon_2,\varepsilon_3)\Big) \intt e^{\lambda s}\|\nabla \psi_t\|_{L^2(\Om)}^2\ds\\
		&+(c^2-\varepsilon_1-N_1\varepsilon_3)\intt e^{\lambda s} \|\nabla(\tau \genk\Lconv \psi_t+\psi)\|_{L^2(\Om)}^2\ds\\
		&+\Big(N_1(1-\varepsilon_2)-1\Big)\intt e^{\lambda s} \|\tau (\genk\Lconv \psi_t)_t+\psi_t\|_{L^2(\Om)}^2\ds \lesssim E[\psi](0) + \lambda \intt e^{\lambda s} E[\psi](s)\ds,
	\end{aligned}
\end{equation}
for all $0<\lambda\leq \lambda_{sup}$, where the constant $\tilde c$ comes from \ref{assu4}.
We fix $\varepsilon_1$ and $\varepsilon_2$ small enough such that $\varepsilon_1<c^2$ and $\varepsilon_2<1$. Once $\varepsilon_2$ is fixed, we select $N_1$ large enough such that 
\begin{equation}
	N_1(1-\varepsilon_2)>1. 
\end{equation}
After that, we take $\varepsilon_3$ small enough such that
\begin{equation}
	\varepsilon_3<\frac{c^2-\varepsilon_1}{N_1}. 
\end{equation}
Once all the above constants are fixed, we take $N_0$ large enough such that the coefficients in the first two integral terms are positive.  
To finish, we pick 
\begin{equation}
	\begin{multlined}
		C^* \lambda = \min\Big\{N_0(\gamma-\tau c^2)\tilde{c}-C(\varepsilon_1)-N_1C(\varepsilon_3),N_0\nu- C(\varepsilon_1) - N_1 C(\varepsilon_2,\varepsilon_3),\\
		c^2-\varepsilon_1-N_1\varepsilon_3,N_1(1-\varepsilon_2)-1,C^*\lambda_{sup}\Big\}>0,
	\end{multlined}
\end{equation}
where $C^*$ is the hidden constant in \eqref{eq:intermediary} once the constants $\varepsilon_{1,2,3}>0$ and $N_{0,1}>0$ have been fixed.
Hence, by recalling \eqref{E_psi_def} and  using \eqref{Equiv_E_L}, we obtain \eqref{Integral_Ineq_L}. This ends the proof of Theorem \ref{thm:decay_Lyap}. 

\subsection{Decay of the pressure quantities: $\psit$ and $(\genk\Lconv \psit)_t$}\label{Sec:Finer_Decay_Result}
In the context of acoustics, $\psi$ in \eqref{eq:first} is the acoustic velocity potential. It is related to the acoustic pressure $p$ through the relationship
\[p = \rho \psit,\]
where $\rho$ is the medium density. 
Theorem~\ref{thm:decay_Lyap} shows the decay of $E[\psi]$. In turn, this guarantees decay of the `velocity' quantities $\nabla\psi$ and $\genk\Lconv\nabla\psit$ (the first is obtained by using the fact that $E[\psi]$ controls $c^2 \|\nabla \psi\|_{L^2(\Om)}^2$, see~\eqref{ineq:control_nabla_psit}). 
While it is possible that one obtains only decay of lumped up quantities and not of individual contributions (see, e.g., \cite{pellicer2019wellposedness}), we show here that we can also obtain decay of
of the following norm 
\begin{equation}\label{def:psi_mod}
	||\psi||_{\rm{mod}}:=\| (\genk\Lconv \psit)_t\|_{L^2(\Om)}^2 + \| \psit\|_{L^2(\Om)}^2+\| \nabla \genk\Lconv \psi_t\|_{L^2(\Om)}^2+\|\nabla\psi\|_{L^2(\Om)}^2. 
\end{equation}
\begin{proposition}\label{thm:decay_all_quants}
	Let the assumptions of Lemma~\ref{lem:main_est} hold. Then, there exists $\tilde\lambda>0$ such that for all $t\geq 0$
	\begin{equation}
		||\psi||_{\rm{mod}}\lesssim e^{-\tilde\lambda t} ||\psi(0)||_{\rm{mod}}.
	\end{equation}
\end{proposition}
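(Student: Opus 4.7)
The plan proceeds in three stages, with two of the four terms in $||\psi||_{\mathrm{mod}}$ coming for free from the already-established energy decay, while the remaining two require a Volterra inversion argument.

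First, Theorem~\ref{thm:decay_Lyap} gives exponential decay of $E[\psi](t)$, which directly controls $\|\nabla \genk\Lconv \psit\|_{L^2(\Om)}^2$ and, via Lemma~\ref{Lemma_1_Decay}, $\|\nabla \psi\|_{L^2(\Om)}^2$ as well. So these two summands of $||\psi||_{\mathrm{mod}}$ already decay like $e^{-\lambda t}$ at the rate supplied by Theorem~\ref{thm:decay_Lyap}.

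The remaining task is to disentangle $\|\psit\|_{L^2(\Om)}^2$ and $\|(\genk\Lconv\psit)_t\|_{L^2(\Om)}^2$ from the combined decaying quantity $\|z(t)\|_{L^2(\Om)}^2 := \|\tau(\genk\Lconv\psit)_t + \psit\|_{L^2(\Om)}^2$, whose decay is part of $E[\psi]$. The idea is to view the relation $\tau(\genk\Lconv\psit)_t + \psit = z$ as a Volterra equation of convolution type for $\psit$. Since $\psi_1 = 0$ implies $(\genk\Lconv\psit)(0) = 0$, the differentiation rules for convolution with a Radon measure (see \cite[Ch.~3, Cor.~7.3]{gripenberg1990volterra}) permit the clean computation $\tgenk \Lconv (\genk\Lconv\psit)_t = (\tgenk\Lconv\genk\Lconv\psit)_t = (1\Lconv\psit)_t = \psit$ after convolving with the resolvent $\tgenk = A\delta_0 + \mathsf{r}$ guaranteed by \ref{assu3}. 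This yields the Volterra equation of the second kind
\begin{equation*}
(\tau + A)\psit + \mathsf{r}\Lconv\psit = \tgenk\Lconv z \quad \text{in } L^2(\Om).
\end{equation*}
Classical Volterra resolvent theory (\cite[Ch.~2, Theorem 3.5]{gripenberg1990volterra}) inverts this relation, expressing $\psit$ as a convolution of an appropriate resolvent kernel against $\tgenk\Lconv z$.

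The principal technical obstacle is the \emph{propagation} of exponential decay under these convolutions: given $\|z(t)\|_{L^2(\Om)}^2 \lesssim e^{-\lambda t}$, one must verify that $\tgenk\Lconv z$, and the subsequent action of the Volterra resolvent associated with $\mathsf{r}/(\tau+A)$, both preserve exponential decay — possibly at a reduced rate $\tilde\lambda>0$. This is a Paley--Wiener type statement whose verification hinges on the specific structure of the kernels from Section~\ref{Sec:Kernel_Verification}: for $\genk = e^{-\beta t}$ the identity $\tgenk = \delta_0 + \beta$ allows everything to be computed explicitly (in fact one then obtains $(\tau+1)\psit = z + \beta\tau\,\genk\Lconv\psit$, whose right-hand side decays exponentially thanks to the first stage combined with Poincar\'e's inequality), while for the exponentially regularized Abel and Mittag--Leffler kernels one appeals to the pointwise exponential bound on $\mathsf{r}$. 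Once $\|\psit(t)\|_{L^2(\Om)}^2 \lesssim e^{-\tilde\lambda t}$ is secured in this way, the decay of $(\genk\Lconv\psit)_t$ follows immediately from the triangle inequality
\begin{equation*}
\tau \|(\genk\Lconv\psit)_t(t)\|_{L^2(\Om)} \leq \|z(t)\|_{L^2(\Om)} + \|\psit(t)\|_{L^2(\Om)},
\end{equation*}
completing the proof of the proposition after possibly shrinking $\tilde\lambda$.
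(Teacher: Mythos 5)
Your first stage is fine and agrees with the paper: exponential decay of $E[\psi]$ from Theorem~\ref{thm:decay_Lyap} together with Lemma~\ref{Lemma_1_Decay} handles $\|\nabla\genk\Lconv\psit\|_{L^2(\Om)}$ and $\|\nabla\psi\|_{L^2(\Om)}$. The gap is in the second stage. Your plan requires that exponential decay of $z=\tau(\genk\Lconv\psit)_t+\psit$ propagate through the convolution with $\tgenk=A\delta_0+\mathsf{r}$ and through the Volterra resolvent of $\mathsf{r}/(\tau+A)$, but nothing in Assumptions~\ref{Assumption_Kernel} or \ref{Assumption_Kernel_Exponential_Decay} gives you decay of $\mathsf{r}$. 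Indeed the model example $\genk=e^{-\beta t}$ already defeats the general mechanism: there $\mathsf{r}\equiv\beta$ is a nonvanishing constant, so $\|(\mathsf{r}\Lconv z)(t)\|_{L^2(\Om)}$ does not inherit any decay from $\|z(t)\|_{L^2(\Om)}\lesssim e^{-\lambda t/2}$ alone (the convolution tends to $\beta\int_0^\infty z$). You rescue that one case by an explicit algebraic identity, and for the remaining admissible kernels you state that the verification ``hinges on the specific structure of the kernels'' --- i.e., on a pointwise exponential bound for $\mathsf{r}$ or a shifted Paley--Wiener condition that is nowhere among the hypotheses of the proposition. As written, this is therefore a kernel-by-kernel argument with an unproved (and unprovable from the stated assumptions) key step, not a proof of the proposition under the assumptions of Lemma~\ref{lem:main_est}.

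The paper avoids the inversion entirely: it introduces the modified energy $E_\textup{mod}[\psi]$ in \eqref{mod_energy}, shows that $E[\psi]+E_\textup{mod}[\psi]$ is equivalent to $\|\psi\|_{\rm mod}^2$ when $\gamma>\tau c^2$, and derives a second differential inequality (Lemma~\ref{lem:main_est_2}) by rewriting \eqref{Wave_Equation} as \eqref{eq:first_3} and testing with $e^{\tilde\lambda s}\bigl(\tau(\genk\Lconv\psit)_t+\tfrac{\tau c^2}{\gamma}\psit\bigr)$ in the Galerkin setting. The new cross term $((\genk\Lconv\psit)_t,\psitt)_{L^2}$ is handled by writing it as $(\genk\Lconv\psitt,\psitt)_{L^2}$ (using $\psi_1=0$) and invoking \ref{assu4}--\ref{assu5} applied to $\psitt$ itself --- exactly the hypotheses you already have. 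The Lyapunov construction is then rerun with $\tilde{\mathcal L}=N_0(E+E_\textup{mod})+F_1+N_1F_2$. If you want to keep your structure, you would need to either add an explicit decay hypothesis on $\mathsf{r}$ (strictly strengthening the proposition's assumptions) or replace the Volterra step with an energy identity of this type.
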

\begin{remark}
	When $\genk=\delta_0,$ the result of Proposition \ref{thm:decay_all_quants} coincides with the classical exponential decay result of \cite{kaltenbacher2011wellposedness}. Here, we had to assume that $\psi_1=0$, but as explained before in the case of $\genk=\delta_0$ this assumption can be removed and the result of Theorem~\ref{thm:decay_Lyap} is valid for $$(\psi_0,\psi_1,\psitwo) \in H^1_0(\Om)\times H^1_0(\Om)\times L^2(\Om).$$
\end{remark}

To prove Proposition \ref{thm:decay_all_quants}, we first introduce the 
{following modified energy
	\begin{equation}\label{mod_energy} 
		\begin{multlined}
			E_\textup{mod}[\psi](t)= \|\tau ( \genk\Lconv \psi_t)_t+\frac{\tau c^2} \gamma\psi_t\|_{L^2(\Om)}^2 \\+ \frac{\tau c^2}{\gamma}\frac{\gamma-\tau c^2}\gamma\| \psit\|_{L^2(\Om)}^2+\frac{\gamma}\tau\| \tau\nabla (\genk\Lconv \psi_t)+\frac{\tau c^2}\gamma\nabla\psi\|_{L^2(\Om)}^2\quad \textrm{for} \quad t\geq 0.
		\end{multlined}
	\end{equation}
}
We show that by using the modified energy~\eqref{mod_energy} together with the one defined in \eqref{E_psi_def}, one can control all the unknowns of interest (i.e., $||\psi||_{\rm{mod}}$) when $\gamma>\tau c^2$. 

The Proposition \ref{thm:decay_all_quants}, will be given through three lemmas. 
\begin{lemma}\label{Equiv_Lemma}
	Assume that $\gamma>\tau c^2$. Then, there exists two positive constants $d_1$ and $d_2$ such that for all $t\geq 0$
	\begin{equation}
		d_1 ||\psi(t)||_{\rm{mod}}^2\leq E[\psi](t)+ {E_\textup{mod}}[\psi](t)\leq d_2 ||\psi(t)||_{\rm{mod}}^2.
	\end{equation}
\end{lemma}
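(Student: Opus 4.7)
The plan is to establish the two-sided bound by treating each of the four quantities appearing in $\|\psi\|_{\textup{mod}}^2$ separately, extracting them from the combined energy $E[\psi] + E_{\textup{mod}}[\psi]$ via linear combinations and the assumption $\gamma > \tau c^2$.

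For the upper bound $E[\psi] + E_{\textup{mod}}[\psi] \leq d_2 \|\psi\|_{\textup{mod}}^2$, I would simply expand every squared-norm-of-a-sum via $\|a+b\|_{L^2}^2 \leq 2\|a\|_{L^2}^2 + 2\|b\|_{L^2}^2$. Each of the resulting terms is a positive multiple of one of the four components of $\|\psi\|_{\textup{mod}}^2$, with a coefficient depending only on the fixed constants $\tau, c, \gamma$. Taking $d_2$ to be the sum of these coefficients finishes this direction.

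For the more interesting lower bound, the strategy is to recover each of the four unknowns $(\genk\Lconv\psit)_t$, $\psit$, $\nabla \genk\Lconv\psit$, $\nabla \psi$ individually from $E[\psi] + E_{\textup{mod}}[\psi]$. First, the gradient term $\|\nabla \genk\Lconv\psit\|_{L^2}^2$ is controlled directly via the last summand $\tau(\gamma - \tau c^2)\|\nabla\genk\Lconv\psit\|_{L^2}^2$ of $2E[\psi]$, which is admissible precisely because $\gamma > \tau c^2$. Second, $\|\psit\|_{L^2}^2$ is controlled directly by the middle summand $\frac{\tau c^2}{\gamma}\frac{\gamma-\tau c^2}{\gamma}\|\psit\|_{L^2}^2$ of $E_{\textup{mod}}[\psi]$, again using $\gamma > \tau c^2$.

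The remaining two unknowns $(\genk\Lconv\psit)_t$ and $\nabla \psi$ require a linear combination argument. Writing $X = \tau(\genk\Lconv\psit)_t$ and $Y = \psit$, the energy $E[\psi]$ controls $\|X + Y\|_{L^2}^2$ while $E_{\textup{mod}}[\psi]$ controls $\|X + \tfrac{\tau c^2}{\gamma} Y\|_{L^2}^2$. Since $\|Y\|_{L^2}^2$ is already controlled, a simple triangle inequality applied to $X = (X + Y) - Y$ then yields control of $\|(\genk\Lconv\psit)_t\|_{L^2}^2$. An entirely analogous argument, with $U = \tau\nabla\genk\Lconv\psit$ and $V = \nabla\psi$, using the two gradient summands $c^2\|U + V\|_{L^2}^2$ in $2E[\psi]$ and $\frac{\gamma}{\tau}\|U + \tfrac{\tau c^2}{\gamma}V\|_{L^2}^2$ in $E_{\textup{mod}}[\psi]$, together with the already-established control of $\|U\|_{L^2}^2$, yields control of $\|\nabla \psi\|_{L^2}^2$. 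Summing the four bounds and choosing $d_1$ as the reciprocal of the total gives the claim.

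There is no serious obstacle here, as the entire argument is linear algebra on squared norms; the only point requiring care is tracking the coefficients to confirm that the constants $d_1$ and $d_2$ are strictly positive, for which the hypothesis $\gamma > \tau c^2$ is used twice (in the extraction of $\|\nabla\genk\Lconv\psit\|_{L^2}^2$ from $E[\psi]$ and of $\|\psit\|_{L^2}^2$ from $E_{\textup{mod}}[\psi]$). The role of $E_{\textup{mod}}[\psi]$, beyond $E[\psi]$ alone, is precisely to provide the second linear combination needed to decouple $(\genk\Lconv\psit)_t$ from $\psit$ (and similarly their spatial analogues), which $E[\psi]$ alone cannot do since it only sees their lumped sums.
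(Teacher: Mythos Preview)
Your proposal is correct and follows essentially the same approach as the paper: the upper bound is dispatched by triangle inequalities, and for the lower bound you extract $\|\nabla\genk\Lconv\psit\|_{L^2}^2$ and $\|\psit\|_{L^2}^2$ directly from the standalone terms in $E[\psi]$ and $E_{\textup{mod}}[\psi]$ respectively, then recover $\|(\genk\Lconv\psit)_t\|_{L^2}^2$ and $\|\nabla\psi\|_{L^2}^2$ via the identity $X=(X+Y)-Y$ and a triangle inequality, exactly as the paper does in Lemma~\ref{Lemma_1_Decay} and the lemma immediately following Lemma~\ref{Equiv_Lemma}. The only cosmetic difference is that you mention both linear combinations $\|X+Y\|^2$ and $\|X+\tfrac{\tau c^2}{\gamma}Y\|^2$ as available, whereas the paper uses just one of them together with the already-controlled $\|Y\|^2$; your actual argument does the same.
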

\begin{proof}
	The inequality $E[\psi](t)+ {E_\textup{mod}}[\psi](t)\leq d_2 ||\psi(t)||_{\rm{mod}}^2$ is readily shown using triangle inequalities. 
	The first part of the inequality on the other hand relies on the fact that $E[\psi]$ controls the following components  $\|\nabla \genk\Lconv \psi_t\|_{L^2(\Om)}^2$ (from the definition in \eqref{E_psi_def}) and $\|\nabla \psi\|_{L^2(\Om)}^2$ (see Lemma~\ref{Lemma_1_Decay}). The component $\|\psi_t\|_{L^2(\Om)}^2$ is similarly controlled thanks to the definition of ${E_\textup{mod}}[\psi]$. Thus, there remains one component of $||\psi(t)||^2_{\rm{mod}}$ which needs to be controlled by ${E_\textup{mod}}[\psi]$. This is the subject of the next lemma.
\end{proof}

	\begin{lemma}
		Assume that $\gamma>\tau c^2$. Then, we have for all $t\geq 0$
		\begin{equation}\label{Est_psi_E}
			\|\tau ( \genk\Lconv \psi_t)_t\|_{L^2(\Om)}^2\lesssim {E_\textup{mod}}[\psi](t). 
		\end{equation}
	\end{lemma}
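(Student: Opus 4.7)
The plan is to observe that both terms needed to recover $\|\tau(\genk\Lconv\psi_t)_t\|_{L^2(\Om)}^2$ via the triangle inequality are already present in the modified energy $E_\textup{mod}[\psi]$, and then to use the strict positivity of $\gamma-\tau c^2$ to absorb the resulting constants.

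More concretely, first I would write the decomposition
\begin{equation*}
    \tau(\genk\Lconv\psi_t)_t = \Big(\tau(\genk\Lconv\psi_t)_t+\tfrac{\tau c^2}{\gamma}\psi_t\Big) - \tfrac{\tau c^2}{\gamma}\psi_t,
\end{equation*}
and apply the triangle inequality together with the elementary inequality $(a+b)^2\leq 2a^2+2b^2$ to obtain
\begin{equation*}
    \|\tau(\genk\Lconv\psi_t)_t\|_{L^2(\Om)}^2 \leq 2\,\Big\|\tau(\genk\Lconv\psi_t)_t+\tfrac{\tau c^2}{\gamma}\psi_t\Big\|_{L^2(\Om)}^2 + 2\Big(\tfrac{\tau c^2}{\gamma}\Big)^2\|\psi_t\|_{L^2(\Om)}^2.
\end{equation*}
The first term on the right is exactly the first summand in the definition of $E_\textup{mod}[\psi]$, so it is controlled by $E_\textup{mod}[\psi](t)$. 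For the second, I would match it against the second summand of $E_\textup{mod}[\psi]$, namely $\tfrac{\tau c^2}{\gamma}\tfrac{\gamma-\tau c^2}{\gamma}\|\psi_t\|_{L^2(\Om)}^2$, which is strictly positive precisely because of the standing assumption $\gamma>\tau c^2$. This yields
\begin{equation*}
    \|\psi_t\|_{L^2(\Om)}^2 \leq \tfrac{\gamma^2}{\tau c^2(\gamma-\tau c^2)}\, E_\textup{mod}[\psi](t),
\end{equation*}
and combining the two bounds finishes the argument.

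There is no real obstacle here: the proof reduces to a triangle inequality after one realizes that the modified energy was defined precisely to make this work, with the coefficient $(\gamma-\tau c^2)/\gamma$ encoding the assumption needed to keep the constant finite. I would simply note that the hidden constant in the conclusion blows up as $\gamma\to \tau c^2$, which is consistent with the known necessity of the strict inequality for global asymptotic behavior.
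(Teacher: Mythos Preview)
Your proof is correct and follows essentially the same route as the paper: both decompose $\tau(\genk\Lconv\psi_t)_t$ by adding and subtracting $\tfrac{\tau c^2}{\gamma}\psi_t$, apply the triangle inequality, and then use the strict positivity of $\gamma-\tau c^2$ to control the $\|\psi_t\|_{L^2(\Om)}^2$ term via the second summand of $E_\textup{mod}[\psi]$. The paper additionally tracks the explicit constant $2\bigl(1+\max\{0,\tfrac{2\tau c^2-\gamma}{\gamma-\tau c^2}\}\bigr)$, but this is only a cosmetic refinement of your argument.
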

	
	\begin{proof}
		First observe that writing the identity 
		$$ \tau ( \genk\Lconv \psi_t)_t  = \tau ( \genk\Lconv \psi_t)_t + \dfrac{\tau c^2} \gamma\psi_t - \dfrac{\tau c^2} \gamma\psi_t,$$ 
		and applying the triangle inequality on the right-hand-side, one obtains
		\begin{equation}
			\begin{aligned}
				\|\tau ( \genk\Lconv \psi_t)_t\|_{L^2(\Om)}^2  \leq&\, 2 \|\tau ( \genk\Lconv \psi_t)_t+\frac{\tau c^2} \gamma\psi_t\|_{L^2(\Om)}^2 + 2 \Big(\frac{\tau c^2}{\gamma}\Big)^2 \|\psit\|_{L^2(\Om)}^2 \\
				\leq &\,2 \Big(\|\tau ( \genk\Lconv \psi_t)_t+\frac{\tau c^2} \gamma\psi_t\|_{L^2(\Om)}^2 + \frac{\tau c^2}{\gamma}\frac{\gamma-\tau c^2}\gamma \|\psit\|_{L^2(\Om)}^2  \Big) \\
				&+ 2  \frac{\tau c^2}{\gamma} \frac{2\tau c^2 -\gamma}{\gamma}\|\psit\|_{L^2(\Om)}^2\\
				&\begin{multlined}\leq 2 \Big(\|\tau ( \genk\Lconv \psi_t)_t+\frac{\tau c^2} \gamma\psi_t\|_{L^2(\Om)}^2 + \frac{\tau c^2}{\gamma}\frac{\gamma-\tau c^2}\gamma \|\psit\|_{L^2(\Om)}^2  \Big)\\ +2 \frac{2\tau c^2 -\gamma}{\gamma-\tau c^2}  \frac{\tau c^2}{\gamma} \frac{\gamma-\tau c^2}{\gamma}\|\psit\|_{L^2(\Om)}^2
				\end{multlined}
			\end{aligned}
		\end{equation}
		Recalling \eqref{mod_energy}, we deduce that 
		\begin{equation}
			\begin{aligned}
				\|\tau ( \genk\Lconv \psi_t)_t\|_{L^2(\Om)}^2 &\leq 2\left(1 + \max\Big\{0,\frac{2\tau c^2 -\gamma}{\gamma-\tau c^2}\Big\}\right) {E_\textup{mod}}[\psi](t),
			\end{aligned}
		\end{equation}
		which yields \eqref{Est_psi_E}. 
	\end{proof}
	\begin{lemma}\label{lem:main_est_2}
		Let the assumptions of Lemma~\ref{lem:main_est} hold. Then, for all $t\geq 0$, we have  
		\begin{equation}\label{E_Main_Estimate_2}
			\begin{multlined}
				e^{\tilde \lambda t}{E_\textup{mod}}[\psi](t)+\tau\frac{\gamma-\tau c^2}\gamma\tilde{c}\int_0^t e^{\tilde \lambda s}\|\genk\Lconv \psitt\|_{L^2(\Om)}^2\ds+ \nu \frac{\tau c^2} \gamma \intt e^{\tilde \lambda s}\|\nabla \psi_t\|_{L^2(\Om)}^2\ds\\\leq {E_\textup{mod}}[\psi](0) + \tilde \lambda  \intt e^{\tilde \lambda s} {E_\textup{mod}}[\psi](s)\ds, 
			\end{multlined}
		\end{equation}
		for $0<\tilde\lambda\leq\lambda_{sup}$ for some $\lambda_{sup}>0$. The hidden constant is independent of $t$. 
	\end{lemma}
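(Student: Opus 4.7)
The plan is to mirror the strategy of Lemma~\ref{lem:main_est}: work in a sufficiently regular Galerkin approximation (where $\phi_t$ below is a legitimate test function) and test~\eqref{Wave_Equation} against
\[
\phi_t := \tau(\genk\Lconv\psi_t)_t + \tfrac{\tau c^2}{\gamma}\psi_t,\qquad \phi := \tau\genk\Lconv\psi_t + \tfrac{\tau c^2}{\gamma}\psi,
\]
which is the natural ``velocity'' variable for $E_\textup{mod}$. The decomposition $\tau\genk\Lconv\psi_t+\psi = \phi + \tfrac{\gamma-\tau c^2}{\gamma}\psi$ separates the $c^2\Delta$-term in~\eqref{Wave_Equation} into a principal ``$\phi$-part'' and a residual ``$\psi$-part.'' Throughout, the assumption $\psi_1=0$ allows one to freely interchange $(\genk\Lconv\psi_t)_t$ with $\genk\Lconv\psi_{tt}$ (and the spatial-gradient analogue), exactly as in~\eqref{why_psi1_zero}.

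The heart of the calculation is a careful cross-term bookkeeping. After integration by parts in space and an application of the identity $(\nabla\psi, \nabla\genk\Lconv\psi_{tt}) = \tfrac{d}{dt}(\nabla\psi, \nabla\genk\Lconv\psi_t) - (\nabla\psi_t, \nabla\genk\Lconv\psi_t)$ on the residual $\psi$-part, two key simplifications take place: (i) the ``leftover'' $-c^2\tfrac{\gamma-\tau c^2}{\gamma}\tau(\nabla\psi_t, \nabla\genk\Lconv\psi_t)$ cancels exactly against the cross-term $(\gamma-\tau c^2)\tfrac{\tau c^2}{\gamma}(\genk\Lconv\nabla\psi_t, \nabla\psi_t)$ produced by the $(\gamma-\tau c^2)\genk\Lconv\Delta\psi_t$ contribution; (ii) direct expansion shows that all remaining total-derivative terms sum precisely to $\tfrac{d}{dt}\bigl[\tfrac{1}{2}E_\textup{mod}[\psi]\bigr]$. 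One thus arrives at the clean pointwise identity
\begin{equation*}
\tfrac{1}{2}\tfrac{d}{dt}E_\textup{mod}[\psi] + \tfrac{(\gamma-\tau c^2)\tau}{\gamma}(\psi_{tt},\genk\Lconv\psi_{tt})_{L^2} + \nu\tau(\nabla\psi_t,\genk\Lconv\nabla\psi_{tt})_{L^2} + \nu\tfrac{\tau c^2}{\gamma}\|\nabla\psi_t\|_{L^2(\Om)}^2 = 0.
\end{equation*}

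From here the argument parallels Lemma~\ref{lem:main_est}: multiply by $e^{\tilde\lambda s}$ and integrate on $(0,t)$; use Assumption~\ref{assu4} with $y=\psi_{tt}$ to convert $(\psi_{tt},\genk\Lconv\psi_{tt})$ into the advertised dissipation $\tau\tfrac{\gamma-\tau c^2}{\gamma}\tilde c\|\genk\Lconv\psi_{tt}\|^2$; invoke Assumption~\ref{assu5} with $y=\nabla\psi_t$ (noting $y(0)=\nabla\psi_1=0$) to bound the residual $\nu\tau(\nabla\psi_t,\genk\Lconv\nabla\psi_{tt})$ from below by a term proportional to $-\tilde\lambda\int_0^t e^{\tilde\lambda s}(\genk\Lconv\nabla\psi_t,\nabla\psi_t)\ds$, which by Young's inequality is absorbed into the $\|\nabla\psi_t\|^2$-dissipation and the $\tilde\lambda\int_0^t e^{\tilde\lambda s}E_\textup{mod}\ds$ term on the right, provided $\tilde\lambda\in(0,\lambda_{sup}]$ is taken small enough. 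Passing to the limit in the Galerkin procedure completes the proof.

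The main obstacle is the cross-term bookkeeping described above. Specifically, the ``extra'' total-derivative $c^2\tfrac{\gamma-\tau c^2}{\gamma}\tau\tfrac{d}{dt}(\nabla\psi, \nabla\genk\Lconv\psi_t)$, arising precisely because $\tau\genk\Lconv\psi_t+\psi\neq\phi$, is what inflates the coefficient of $(\nabla\genk\Lconv\psi_t,\nabla\psi)$ from its ``natural'' value $\tau^2c^4/\gamma$ (produced by expanding $c^2\|\nabla\phi\|^2$) up to $\tau c^2$, matching the cross-coefficient hidden in the peculiar $\tfrac{\gamma}{\tau}\|\nabla\phi\|^2$ term of $E_\textup{mod}$. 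Without this identification, the time-derivative terms fail to reassemble into $E_\textup{mod}$ and the identity collapses.
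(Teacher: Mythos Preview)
Your proposal is correct and ultimately lands on the same pointwise identity and the same use of \ref{assu4}--\ref{assu5} as the paper. The difference is purely in how you reach that identity. You test \eqref{Wave_Equation} (written in the variable $\tau\genk\Lconv\psi_t+\psi$) against $\phi_t$, then spend the bulk of the argument on cross-term bookkeeping---splitting $\tau\genk\Lconv\psi_t+\psi=\phi+\tfrac{\gamma-\tau c^2}{\gamma}\psi$, integrating by parts the residual $(\nabla\psi,\nabla\genk\Lconv\psi_{tt})$, tracking a cancellation, and then reassembling four total-derivative pieces into $\tfrac{\gamma}{\tau}\|\nabla\phi\|^2$. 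The paper bypasses all of this by first rewriting the PDE itself as
\[
\phi_{tt}+\tfrac{\gamma-\tau c^2}{\gamma}\psi_{tt}-\tfrac{\gamma}{\tau}\Delta\phi-\nu\Delta\psi_t=0,
\]
which is an elementary algebraic regrouping of \eqref{eq:first}. Once the equation is in this form, testing with $\phi_t$ produces each term of $E_\textup{mod}$ directly---$\tfrac12\tfrac{d}{dt}\|\phi_t\|^2$, $\tfrac{\tau c^2}{\gamma}\tfrac{\gamma-\tau c^2}{\gamma}\tfrac12\tfrac{d}{dt}\|\psi_t\|^2$, and $\tfrac{\gamma}{2\tau}\tfrac{d}{dt}\|\nabla\phi\|^2$---with no expansion or recombination needed, and the two remaining terms are exactly the ones you then hit with \ref{assu4} and \ref{assu5}. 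Your route works, but the paper's rewriting explains \emph{why} the coefficients in $E_\textup{mod}$ (in particular the peculiar $\tfrac{\gamma}{\tau}$ in front of $\|\nabla\phi\|^2$) are what they are, rather than discovering them a posteriori through cancellation.
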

	
	\begin{proof}
		To show \eqref{E_Main_Estimate_2}, we rewrite equation \eqref{Wave_Equation} as 
		\begin{equation}\label{eq:first_3}
			\begin{aligned}
				(\tau \genk\Lconv \psi_t+ \frac{\tau c^2}\gamma \psi)_{tt} + \frac{\gamma -\tau c^2} \gamma \psitt - \frac{\gamma}{\tau}\Delta ( \tau \genk \Lconv \psi_t + \frac{\tau c^2}\gamma\psi) - \nu \Delta\psi_{t}   = 0,
			\end{aligned}
		\end{equation}
		which we want to test with $e^{ \tilde \lambda s}(\tau (\genk\Lconv \psi_t)_t+\frac{\tau c^2} \gamma\psi_t)$.
		Here, for the same reasons put forth in the proof of Lemma~\ref{lem:main_est}, we carry out the testing in the semi-discrete setting and obtain
		\begin{equation}
			\begin{aligned}
				&e^{\tilde \lambda t}\frac{1}{2}\ddt {E_\textup{mod}}[\psi]+\nu \frac{\tau c^2} \gamma e^{\tilde \lambda t} \|\nabla \psi_t\|_{L^2(\Om)}^2 + e^{\tilde \lambda t} \tau\frac{\gamma-\tau c^2}\gamma ((\genk\Lconv \psit)_t, \psitt)_{L^2} \\
				& +\nu e^{\tilde \lambda t}(\tau (\genk\Lconv \nabla\psi_t)_t, \nabla \psi_t)_{L^2} =0.
			\end{aligned}  
		\end{equation}
		We use the fact that $\psi_1=0$ to write
		\[
		((\genk\Lconv \psit)_t, \psitt)_{L^2} = (\genk\Lconv \psitt, \psitt)_{L^2}.
		\]
		Hence, integrating in time and using \ref{assu4}--\ref{assu5} yields the desired result. 
	\end{proof}

	\begin{proof}[Proof of Proposition \ref{thm:decay_all_quants}]
		We intend to show that Lemma~\ref{lem:main_est_2} combined with Lemmas~\ref{lem:main_est}--\ref{lem:F2_est} yields exponential decay of $E[\psi]+{E_\textup{mod}}[\psi]$ and thus the exponential decay of $||\psi(t)||_{\rm{mod}}$, by exploiting Lemma \ref{Equiv_Lemma}.  To this end, let us define the modified Lyapunov functional $\mathcal{\tilde L}=\mathcal{\tilde L}(t)$ as 
		\begin{equation}\label{Lyapunov_2}
			\mathcal{\tilde L}(t)= N_0 \big(E[\psi](t)+{E_\textup{mod}} E[\psi](t)\big)+F_1(t)+ N_1F_2(t),
		\end{equation} 
		where $N_0$ and $N_1$ are large positive constants that will be fixed later.  
		
		Similarly to \eqref{Equiv_E_L}, it is clear that for $N_0$ large enough, for all $t\geq 0$, it holds that 
		\begin{equation}\label{Equiv_E_L_2}
			\mathcal{\tilde L}(t)\sim  E[\psi](t) +{E_\textup{mod}}{E}[\psi](t)
			\sim ||\psi(t)||_{\rm{mod}}^2
		\end{equation}
		Using \eqref{Lyapunov_2} together with the estimates \eqref{E_Main_Estimate},  \eqref{F_1_Main_Est_2} \eqref{F_2_Main}, and \eqref{E_Main_Estimate_2}, we obtain 
		\begin{equation}  
			\begin{aligned}\label{eq:intermediary_2}
				&e^{\tilde \lambda t}\mathcal{\tilde L}(t)+\Big(N_0(\gamma-\tau c^2)\tilde{c}- C(\varepsilon_1)-N_1C(\varepsilon_3)\Big)\intt e^{\tilde \lambda s} \|\genk\Lconv \nabla \psi_t\|_{L^2(\Om)}^2\ds\\
				&
				+N_0\tau\frac{\gamma-\tau c^2}\gamma\tilde{c}\int_0^t e^{\tilde \lambda s}\|\genk\Lconv \psitt\|_{L^2(\Om)}^2\ds\\&+ \Big(N_0\nu\Big(1+\frac{\tau c^2} \gamma\Big)- C(\varepsilon_1)- N_1C(\varepsilon_2,\varepsilon_3)\Big) \intt e^{\tilde \lambda s}\|\nabla \psi_t\|_{L^2(\Om)}^2\ds\\
				&+(c^2-\varepsilon_1-N_1\varepsilon_3)\intt e^{\tilde \lambda s} \|\nabla(\tau \genk\Lconv \psi_t+\psi)\|_{L^2(\Om)}^2\ds\\
				&+\Big(N_1(1-\varepsilon_2)-1\Big)\intt e^{ \tilde \lambda s} \|\tau (\genk\Lconv \psi_t)_t+\psi_t\|_{L^2(\Om)}^2\ds \\
				&\qquad\qquad\qquad\lesssim \big(E[\psi](0)+{E_\textup{mod}}[\psi](0)\big) + \tilde \lambda \intt e^{\tilde \lambda s} \big(E[\psi](s)+{E_\textup{mod}}[\psi](s)\big)\ds,
			\end{aligned}
		\end{equation}
		for all $0<\tilde \lambda\leq \lambda_{sup}$, where $\tilde c$ comes from \ref{assu4}.
		We fix $\varepsilon_1$ and $\varepsilon_2$ small enough such that $\varepsilon_1<c^2$ and $\varepsilon_2<1$. Once, $\varepsilon_2$ is fixed, we select $N_1$ large enough such that 
		\begin{equation}
			N_1(1-\varepsilon_2)>1. 
		\end{equation}
		After that, we take $\varepsilon_3$ small enough such that
		\begin{equation}
			\varepsilon_3<\frac{c^2-\varepsilon_1}{N_1}. 
		\end{equation}
		Once, all the above constants are fixed, we take $N_0$ large enough such that the coefficients in the first two integral terms are positive.  
		To finish, we pick 
		\begin{equation}
			\begin{multlined}
				C^* \tilde \lambda = \min\Big\{N_0(\gamma-\tau c^2)\tilde{c}- C(\epsilon_1)-N_1C(\varepsilon_3),N_0\tau\frac{\gamma-\tau c^2}\gamma\tilde{c},N_0\nu\Big(1+\frac{\tau c^2} \gamma\Big)-\\ C(\varepsilon_1)- N_1C(\varepsilon_2,\varepsilon_3),
				c^2-\varepsilon_1-N_1\varepsilon_3,N_1(1-\varepsilon_2)-1,C^*\lambda_{sup}\Big\},
			\end{multlined}
		\end{equation}
		where $C^*$ is the hidden constant in \eqref{eq:intermediary} once the constants $\varepsilon_{1,2,3}>0$ and $N_{0,1}>0$ have been fixed.
		Hence, by recalling \eqref{Equiv_E_L_2}, we obtain the desired result. 
	\end{proof}
	
	\section{A decay result under weaker conditions on the kernel}\label{sec:weak_decay}
	In Section~\ref{sec:asympto_behav_1}, {Set of Assumptions~}\ref{Assumption_Kernel_Exponential_Decay} was rather restrictive. It was specifically designed to provide an exponential control of certain unknowns. It is possible to state a weaker version of this assumption that allow us to show a weaker decay result of the energy $\|\psi\|_{\rm{mod}}$ defined in \eqref{def:psi_mod}. This result is given in Proposition~\ref{prop:decay_Lyap_weak} below. The assumption needed is given here. 
	
	\begin{assumption}[Weak control of the energy]\label{Kernel_Properties_Weak_Decay}
		We assume that
		\begin{enumerate}
			\item[($\mathcal{A}4_\textup{weak}$) \namedlabel{assu4weak}{($\mathcal{A}4_\textup{weak}$)}] Let there exist a constant $\tilde{c}_0>0$ such that
			\begin{equation}
				\intt  (\genk\Lconv y, y)_{L^2}\ds\geq \tilde{c}_0 \intt  \|\genk\Lconv y\|_{L^2(\Om)}^2\ds,
			\end{equation}
			for all $t\in \R^+$ and $y\in L^2(0,t;L^2(\Om))$.
			
			\item[($\mathcal{A}5_\textup{weak}$) \namedlabel{assu5weak}{($\mathcal{A}5_\textup{weak}$)}]
			\begin{equation}
				\intt (\genk\Lconv y_t, y)_{L^2}\ds\geq0
			\end{equation}
			for all $t\in \R^+$ and $y\in H^1\cap X_\genk(0,t;L^2(\Om))$ such that $y(0) = 0$.
		\end{enumerate}
	\end{assumption}
	Then, we can state the following result related to the decay of the energy $\|\psi\|_{\rm{mod}}.$
	\begin{proposition}[Weak energy decay]\label{prop:decay_Lyap_weak}
		Suppose that Assumptions~\ref{Assumption_Kernel} and~\ref{Kernel_Properties_Weak_Decay} on the kernel hold. Assume in addition that \eqref{ineqs:condition_coefficients} holds and that initial data are chosen as in Theorem~\ref{thm:wellp}, and that $\psi_1=0$. Then,
		\[ \|\psi(t)\|_{\rm{mod}} \to 0 \quad \textrm{as}\quad t\to\infty.\]
	\end{proposition}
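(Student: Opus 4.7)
My plan is to adapt the Lyapunov-functional machinery of Section~\ref{Section_Asymptotic_Behavior_1} and \S\ref{Sec:Finer_Decay_Result}, setting the exponential weight to zero throughout and replacing \ref{assu4}--\ref{assu5} by their weakened counterparts \ref{assu4weak}--\ref{assu5weak}. Concretely, I first revisit the energy identities behind Lemmas~\ref{lem:main_est}, \ref{lem:F1_est}, \ref{lem:F2_est}, and \ref{lem:main_est_2}, and check that at $\lambda = \tilde\lambda = 0$ the weakened assumptions still yield the same conclusions. Indeed, \ref{assu4weak} is precisely \ref{assu4} specialized to $\lambda=0$, so the coercivity estimate for the $(\gamma-\tau c^2)$-term carries over unchanged. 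For the $\nu$-term arising in~\eqref{why_psi1_zero}, the assumption $\psi_1 = 0$ converts $(\genk\Lconv\psi_t)_t$ into $\genk\Lconv\psi_{tt}$, and \ref{assu5weak} yields the required non-negativity (with $\lambda = 0$ there is no right-hand correction term to worry about).

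Next, I form the same Lyapunov functional as in~\eqref{Lyapunov_2},
\[\mathcal{\tilde L}(t) = N_0\big(E[\psi](t) + E_\textup{mod}[\psi](t)\big) + F_1(t) + N_1 F_2(t),\]
with the same balancing of the constants $N_0$, $N_1$, $\varepsilon_1$, $\varepsilon_2$, $\varepsilon_3$ as in the proof of Proposition~\ref{thm:decay_all_quants}; those choices were made independently of $\tilde\lambda$. Combining the $\lambda=0$ analogues of \eqref{E_Main_Estimate}, \eqref{F_1_Main_Est_2}, \eqref{F_2_Main}, and \eqref{E_Main_Estimate_2}, and using Lemma~\ref{Equiv_Lemma} for the equivalence $\mathcal{\tilde L}(t) \sim \|\psi(t)\|_{\rm{mod}}$, I would aim for the clean estimate
\[\mathcal{\tilde L}(t) + c\int_0^t \|\psi(s)\|_{\rm{mod}}\,\textup{d}s \;\leq\; \mathcal{\tilde L}(0), \qquad t \geq 0,\]
for some $c > 0$. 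The essential check is that the combined dissipation on the left-hand side controls each term of $\|\psi\|_{\rm{mod}}$ pointwise: $\|\genk\Lconv\nabla\psit\|^2$ directly; $\|\psit\|^2$ via Poincar\'e from $\|\nabla\psit\|^2$; $\|\nabla\psi\|^2$ via the triangle inequality from $\|\nabla(\tau\genk\Lconv\psit+\psi)\|^2$ and $\|\genk\Lconv\nabla\psit\|^2$; and $\|(\genk\Lconv\psit)_t\|^2$ similarly from $\|\tau(\genk\Lconv\psit)_t+\psit\|^2$ and $\|\psit\|^2$.

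Granted the above, the conclusion follows at once. The map $t \mapsto \mathcal{\tilde L}(t)$ is non-negative, bounded by $\mathcal{\tilde L}(0)$, and non-increasing, so it admits a limit $\ell \in [0, \mathcal{\tilde L}(0)]$. The equivalence from Lemma~\ref{Equiv_Lemma} gives $\|\psi(t)\|_{\rm{mod}} \gtrsim \mathcal{\tilde L}(t) \geq \ell$ for every $t \geq 0$. Combined with $\|\psi\|_{\rm{mod}} \in L^1(0,\infty)$, which is a direct consequence of the displayed inequality, this forces $\ell = 0$ and hence $\|\psi(t)\|_{\rm{mod}} \to 0$ as $t \to \infty$.

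I expect the main obstacle to lie in the combining step. Without the exponential weight there is no slack to absorb lower-order cross-terms produced by the auxiliary functionals $F_1$ and $F_2$, so each coefficient in front of a dissipative quantity in the final inequality must come out strictly positive. The reassurance is that in the proof of Theorem~\ref{thm:decay_Lyap} (and its refinement, Proposition~\ref{thm:decay_all_quants}) the balancing of $\varepsilon_1$, $\varepsilon_2$, $\varepsilon_3$, $N_0$, $N_1$ is precisely what ensures positivity of those coefficients already at the $\lambda=0$ level; the exponential weight was only needed afterwards to absorb the residual $\lambda\mathcal{\tilde L}$ remainder on the right-hand side of~\eqref{eq:intermediary_2}. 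Setting $\lambda = 0$ simply removes that remainder and leaves the remaining combinatorics intact.
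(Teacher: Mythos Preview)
Your proposal follows exactly the route the paper takes: observe that \ref{assu4weak}--\ref{assu5weak} are the $\lambda=0$ specializations of \ref{assu4}--\ref{assu5}, set the exponential weight to zero in the combined estimate~\eqref{eq:intermediary_2}, balance $\varepsilon_{1,2,3}$ and $N_{0,1}$ as in the proof of Proposition~\ref{thm:decay_all_quants}, and deduce that $\|\psi(\cdot)\|_{\rm{mod}}$ is integrable on $(0,\infty)$. The paper's proof is precisely this, stated in a single paragraph.

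One point to watch in your concluding argument: the claim that $\mathcal{\tilde L}$ is \emph{non-increasing} does not follow from the displayed inequality $\mathcal{\tilde L}(t)+c\int_0^t\|\psi\|_{\rm{mod}}\,\textup{d}s\leq \mathcal{\tilde L}(0)$, which is anchored at $t_0=0$. The kernel hypotheses \ref{assu4weak}--\ref{assu5weak} only guarantee non-negativity of $\int_0^t(\genk\Lconv y,y)_{L^2}\,\textup{d}s$ and $\int_0^t(\genk\Lconv y_t,y)_{L^2}\,\textup{d}s$, not of $\int_{t_1}^{t_2}$, so the pointwise differential form of the combined estimate is not sign-definite and $\mathcal{\tilde L}$ need not be monotone. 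The paper's own proof is equally terse here---it passes directly from $\int_0^\infty\|\psi(s)\|_{\rm{mod}}^2\,\textup{d}s<\infty$ to $\|\psi(t)\|_{\rm{mod}}\to 0$ without further comment---so this is a shared loose end rather than a divergence in approach.
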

	The main advantage of {Set of Assumptions~}\ref{Kernel_Properties_Weak_Decay} (compared to {Set of Assumptions~}\ref{Assumption_Kernel_Exponential_Decay}) is that we do not have to rely on the exponential decay of the kernel to verify it. Indeed, it suffices that $\genk\in L^1(\R^+)$ be positive, nonincreasing and convex for it to verify the assumptions of \cite[Theorem 2 (iii)]{nohel1976frequency} and \cite[Lemma 5.1]{kaltenbacher2022limiting},  thus satisfying {Set of Assumptions~}\ref{Kernel_Properties_Weak_Decay}. For example, the polynomially decaying kernel \eqref{Polynomial_Kernel}
	and the Mittag-Leffler function:
	\begin{equation}
		\genk=\frac{t^{\alpha-1}}{\Gamma(1-\alpha)}E_{\alpha,\alpha}(-t^\alpha)\quad\textrm{with } 0<\alpha\leq 1,
	\end{equation}
	verify {Set of Assumptions~}\ref{Kernel_Properties_Weak_Decay} but not the stronger {Set of Assumptions~}\ref{Assumption_Kernel_Exponential_Decay}. 
	
	\begin{proof}[Proof of Proposition~\ref{prop:decay_Lyap_weak}]
		{Set of Assumptions~}\ref{Kernel_Properties_Weak_Decay} corresponds to allowing $\lambda_{sup} = 0$ in {Set of Assumptions~}\ref{Assumption_Kernel_Exponential_Decay}. Thus, setting $\tilde\lambda=0$ in~\eqref{eq:intermediary_2}, yields
		\begin{equation}  
			\begin{aligned}
				&\mathcal{\tilde L}(t)+\Big(N_0(\gamma-\tau c^2)\tilde{c}- C(\varepsilon_1)-N_1C(\varepsilon_3)\Big)\intt  \|\genk\Lconv \nabla \psi_t\|_{L^2(\Om)}^2\ds\\
				&
				+N_0\tau\frac{\gamma-\tau c^2}\gamma\tilde{c}\int_0^t \|\genk\Lconv \psitt\|_{L^2(\Om)}^2\ds\\&+ \Big(N_0\nu\Big(1+\frac{\tau c^2} \gamma\Big)- C(\varepsilon_1)- N_1C(\varepsilon_2,\varepsilon_3)\Big) \intt \|\nabla \psi_t\|_{L^2(\Om)}^2\ds\\
				&+(c^2-\varepsilon_1-N_1\varepsilon_3)\intt \|\nabla(\tau \genk\Lconv \psi_t+\psi)\|_{L^2(\Om)}^2\ds\\
				&+\Big(N_1(1-\varepsilon_2)-1\Big)\intt \|\tau (\genk\Lconv \psi_t)_t+\psi_t\|_{L^2(\Om)}^2\ds \lesssim \big(E[\psi](0)+{E_\textup{mod}}[\psi](0)\big).
			\end{aligned}
		\end{equation}
		where after choosing $\varepsilon_{1,2,3}>0$ similarly to the proof of Proposition~\ref{thm:decay_all_quants} one obtains, after letting $t\to\infty$, that
		$$\int^\infty_0  \|\psi(s)\|_{\rm{mod}}^2\ds < \infty.$$
		Thus $ \|\psi(t)\|_{\rm{mod}}\to 0$ as $t\to\infty$.

	\end{proof}
	
	Note that Proposition~\ref{prop:decay_Lyap_weak} gives no explicit decay rate.
	This result is most likely not sharp in the sense that, in the case of viscoelastic wave equation with nonlocal dissipation with memory kernel~\eqref{Polynomial_Kernel}, one expects a decay rate depending on $p$, see, e.g., \cite{munoz1996decay}.
	
	\section{Conclusion}
	In this article, we have established a new global well-posedness result for a generalized Moore--Gibson--Thompson equation by leaning on the strong damping ($\nu \D \psi_t$) to eliminate the need of the existence of a regular enough resolvent for the kernel $\genk$. Moreover, we have shown, through the construction of a suitable Lyapunov functional, the exponential decay of the energy which holds provided that the memory kernel satisfies certain assumptions and have given examples of such kernels. We have also shown that under weaker assumptions, one can still obtain that the energy of the solution vanishes. 
	
	Future work will focus on establishing optimality of the decay rates for the nonlocal Moore--Gibson--Thompson equations covered by the current work. We expect that this question can be treated using, e.g., the complete description of the spectrum of the Laplacian combined with other techniques along the lines of~\cite{pellicer2019wellposedness,2019optimal}.

	While the well-posedness theory presented here covers the case of fractional kernels, the limiting behavior framework does not satisfactorily address their long-term behavior. The main challenge resides in being able to use the fractional derivative terms to argue that one can control part of the energy with it (see, e.g., \ref{assu4} and \ref{assu4weak}). One may, along the lines of, e.g.,~\cite{fritz2022equivalence,lasiecka2017global}, attempt to consider different definitions of the energy of the system which might be easier to treat. The right choice of such an energy functional for \eqref{eq:first} remains open.
	
\section*{Acknowledgements}
The authors thank the reviewer for his/her careful reading of the manuscript and helpful remarks, which have led to marked improvements.
They are also grateful to Dr. Vanja Nikoli\'c (Radboud University) for her careful reading and extensive feedback on drafts of the manuscript and for many thought-provoking discussions.
\section{Compactness and embeddings of the space of solutions}\label{Sec:Preliminaries}
Towards proving Theorem~\ref{thm:wellp}, we needed certain compactness properties on the spaces of solutions which is needed to pass to the limit in the Galerkin procedure. In what follows we provide the Lemmas needed for the limiting procedure as well as their proofs.

First, we consider the space $H^1\cap X_\genk^\infty(0,T)$ which can be thought of as an augmented version of the $X_\genk^p(0,T)$ space introduced in \cite{meliani2023unified}. This augmentation is shown to give nice completeness properties regardless of the regularity of the resolvent $\tgenk$.
\begin{lemma}[Compactness of $H^1 \cap X_\genk^\infty(0,T)$ defined in \eqref{def_Xfp}] \label{Lemma:hybrid_Caputo_seq_compact}
	Let $\genk \in \mm(0,T)$, then $H^1 \cap X_\genk^\infty(0,T)$ is a Banach space.
	Furthermore, the unit ball of $H^1 \cap X_\genk^\infty(0,T)$, $B_\genk^\infty$, is weak-$*$ sequentially compact. Naturally, when $T$ is finite we have the embeddings $$H^1 \cap X_\genk^\infty(0,T)\hookrightarrow H^1(0,T)\hookrightarrow C[0,T].$$
\end{lemma}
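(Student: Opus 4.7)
The plan is to view $H^1\cap X_\genk^\infty(0,T)$ as a closed subspace of the product space $L^\infty(0,T)\times L^2(0,T)\times L^\infty(0,T)$ via the isometric embedding $u\mapsto (u,u_t,\genk\Lconv u_t)$; once this identification is made, both completeness and weak-$*$ sequential compactness become inheritance statements from the factors. The only non-trivial ingredient is that the image of this embedding is closed under the appropriate limits, and for this we need the continuity of the convolution with a finite Radon measure.

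For completeness, I would take a Cauchy sequence $\{u_n\}\subset H^1\cap X_\genk^\infty$. By definition each of the three coordinate sequences $\{u_n\}$, $\{(u_n)_t\}$, and $\{\genk\Lconv (u_n)_t\}$ is Cauchy in its respective space, so they converge to limits $u\in L^\infty$, $v\in L^2$, $w\in L^\infty$. Convergence $u_n\to u$ in $L^\infty$ (hence in $\mathcal{D}'(0,T)$) together with $(u_n)_t\to v$ in $L^2$ (hence in $\mathcal{D}'$) forces $u_t=v$ in the distributional sense. For the third coordinate, I would invoke Young's inequality for convolution with a finite Radon measure,
\begin{equation*}
\|\genk\Lconv g\|_{L^p(0,T)}\le \|\genk\|_{\mm(0,T)}\|g\|_{L^p(0,T)},\qquad 1\le p\le\infty,
\end{equation*}
applied with $p=2$ to conclude $\genk\Lconv (u_n)_t\to \genk\Lconv v=\genk\Lconv u_t$ in $L^2$; comparing with $\genk\Lconv(u_n)_t\to w$ in $L^\infty\hookrightarrow L^2$ identifies $w=\genk\Lconv u_t$. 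Hence $u\in H^1\cap X_\genk^\infty$ and $u_n\to u$ in its norm.

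For weak-$*$ sequential compactness of the unit ball, I would apply Banach--Alaoglu to each factor: a bounded sequence $\{u_n\}$ gives, along a common subsequence (diagonalizing or using separability of $L^1$ for the $L^\infty$-factors and reflexivity of $L^2$), $u_n\overset{*}{\rightharpoonup} u$ in $L^\infty$, $(u_n)_t\rightharpoonup v$ in $L^2$, and $\genk\Lconv (u_n)_t\overset{*}{\rightharpoonup} w$ in $L^\infty$. Testing with $C_c^\infty(0,T)$ identifies $v=u_t$ distributionally. To see $w=\genk\Lconv u_t$ I would test against $\phi\in L^1(0,T)$ and use Fubini together with the associativity/commutativity of the Laplace convolution with a finite measure: since convolution by $\genk$ is a bounded linear operator $L^2\to L^2$, it is weakly continuous, so $\genk\Lconv(u_n)_t\rightharpoonup \genk\Lconv u_t$ in $L^2$; uniqueness of limits in $\mathcal{D}'$ then gives $w=\genk\Lconv u_t$, placing $u$ in the unit ball.

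The continuous embeddings $H^1\cap X_\genk^\infty(0,T)\hookrightarrow H^1(0,T)\hookrightarrow C[0,T]$ are immediate: for finite $T$ one has $L^\infty(0,T)\hookrightarrow L^2(0,T)$, so the first two coordinates control $\|u\|_{H^1(0,T)}$, and then $H^1(0,T)\hookrightarrow C[0,T]$ is the standard one-dimensional Sobolev embedding. I expect the only subtle point in the whole argument to be the identification of the convolution limit, i.e., verifying that $\genk\Lconv$ interacts well with the weak and weak-$*$ limits in the presence of a point mass in $\genk$; this is handled cleanly once Young's inequality for measure convolutions is in place, after which no further hypothesis on the resolvent $\tgenk$ is required, which is precisely the advantage of the space $H^1\cap X_\genk^\infty$ over the previously used $X_\genk^p$ from \cite{meliani2023unified}.
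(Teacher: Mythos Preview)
Your proof is correct and follows essentially the same approach the paper gestures at: the paper's own proof is a one-line reference to \cite[Lemma 1]{meliani2023unified} (``slight modification \ldots\ by taking advantage of the embedding $H^1(0,T)\hookrightarrow C[0,T]$''), and your argument fleshes out precisely what that modification amounts to---namely, that the extra control $u_t\in L^2(0,T)$ lets you identify the convolution limit via the boundedness of $\genk\Lconv\cdot:L^2\to L^2$ (Young for measures) rather than via any hypothesis on the resolvent $\tgenk$. You have correctly isolated the key point the paper alludes to in its closing remark about ``no extra conditions on the regularity of the resolvent.''
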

\begin{proof} 
	Completeness and sequential compactness of the space $H^1 \cap X_\genk^\infty$ follow from a slight modification of the arguments in \cite[Lemma 1]{meliani2023unified} by taking advantage of the embedding $H^1(0,T) \hookrightarrow C[0,T]$. We omit the details.
\end{proof}

In the course of the proof of Theorem~\ref{thm:wellp}, we needed to argue that by proving a uniform-in-$n$ (where $n$ is the Galerkin discretization parameter) bound on only the summed quantity $\|(\tau\genk\Lconv\psi_{t}+\psi)_{tt}\|_{L^2(0,T;H^{-1}(\Om))}$, we can still pass to the limit in the Galerkin procedure. To this end, we use the following lemma. 

\begin{lemma}\label{lem:sq_compactness_highest_der}
	Let $\genk \in \mm_{\textup{loc}}(\R^+)$. Then the space $V_{\genk}(0,T)$ defined in \eqref{V_R_Space} and endowed with the norm \eqref{V_R_Norm} is complete, and its unit ball is weak sequentially compact.
\end{lemma}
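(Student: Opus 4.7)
The plan is to leverage reflexivity of $L^2(0,T)$ together with two standard facts: (i) convolution with a finite Radon measure $\genk\in\mm_\textup{loc}(\R^+)$ is a bounded linear operator on $L^2(0,T)$ via the Young-type estimate $\|\genk\Lconv f\|_{L^2(0,T)}\lesssim\|\genk\|_{\mm(0,T)}\|f\|_{L^2(0,T)}$ (see \cite[Ch.~3]{gripenberg1990volterra}); and (ii) distributional differentiation is sequentially continuous on $\mathcal{D}'(0,T)$, in the sense that integration by parts commutes with both strong and weak $L^2$-limits when tested against $\phi\in C_c^\infty(0,T)$.

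For completeness, I would take a Cauchy sequence $\{u_n\}\subset V_\genk(0,T)$. By the definition of the norm \eqref{V_R_Norm}, each of $u_n$, $u_{n,t}$, and $(\genk\Lconv u_{n,t}+u_n)_{tt}$ is Cauchy in $L^2(0,T)$, producing respective $L^2$-limits $u,\, v,\, w$. Testing against $\phi\in C_c^\infty(0,T)$ and integrating by parts identifies $v=u_t$. The $L^2$-boundedness of $\genk\Lconv\cdot$ then yields $\genk\Lconv u_{n,t}\to\genk\Lconv u_t$ strongly in $L^2$, hence $\genk\Lconv u_{n,t}+u_n\to\genk\Lconv u_t+u$ in $L^2$. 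Writing $\int_0^T(\genk\Lconv u_{n,t}+u_n)_{tt}\phi\,\ds=\int_0^T(\genk\Lconv u_{n,t}+u_n)\phi''\,\ds$ for $\phi\in C_c^\infty(0,T)$ and passing to the limit identifies $w=(\genk\Lconv u_t+u)_{tt}\in L^2(0,T)$. Thus $u\in V_\genk(0,T)$ and $u_n\to u$ in the $V_\genk$-norm.

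For weak sequential compactness of the unit ball, take a bounded sequence $\{u_n\}\subset V_\genk(0,T)$. Reflexivity of $L^2(0,T)$ and a diagonal extraction yield a subsequence (not relabeled) along which $u_n\rightharpoonup u$, $u_{n,t}\rightharpoonup v$, and $(\genk\Lconv u_{n,t}+u_n)_{tt}\rightharpoonup w$, all weakly in $L^2(0,T)$. The same test-function argument identifies $v=u_t$. Since $\genk\Lconv\cdot:L^2\to L^2$ is bounded linear, it is weak-to-weak continuous, so $\genk\Lconv u_{n,t}\rightharpoonup\genk\Lconv u_t$ in $L^2$; summing, $\genk\Lconv u_{n,t}+u_n\rightharpoonup\genk\Lconv u_t+u$ weakly. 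Shifting the two time derivatives onto $\phi\in C_c^\infty(0,T)$ and using this weak convergence then identifies $w=(\genk\Lconv u_t+u)_{tt}\in L^2(0,T)$, so $u\in V_\genk(0,T)$.

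The main obstacle I foresee is establishing the Young-type bound for convolution of a locally finite Radon measure with an $L^2$-function on $(0,T)$, which serves as the cornerstone for both the strong and weak limit-passage arguments; for this I would invoke the convolution theory in \cite[Ch.~3]{gripenberg1990volterra}. A minor point to be careful with is handling the possible point mass of $\genk$ at $0$, for which the convolution is still well-defined as a bounded operator on $L^2$ with norm controlled by $\|\genk\|_{\mm[0,T]}$. Once these ingredients are in place, the remainder is a routine reflexivity-plus-distributional-continuity exercise, mirroring the argument used to verify completeness of $Y_\genk^p(0,T)$ in \cite[Lemma~3]{meliani2023unified}.
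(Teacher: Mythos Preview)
Your proposal is correct and follows essentially the same approach as the paper: extract $L^2$-limits of the three components, use continuity of the convolution operator $v\mapsto\genk\Lconv v$ on $L^2(0,T)$ (the paper invokes exactly this, citing $\genk\in\mm_{\textup{loc}}(\R^+)$), and identify the limits via integration by parts against compactly supported test functions; for weak sequential compactness, the paper likewise defers to the reflexivity-based argument of \cite[Lemma~1]{meliani2023unified}. The only cosmetic difference is that the paper tests with $\phi\in C_c^2([0,T])$ rather than $C_c^\infty(0,T)$, which is immaterial.
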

\begin{proof}
	To show that $V_{\genk}(0,T)$ is complete, take a Cauchy sequence $(u_n)_{n\geq1}\subset V_{\genk}(0,T)$. Then $(u_n)_{n\geq1}$, $(u_{nt})_{n\geq1}$, and $(\genk\Lconv u_{nt}+u_n)_{tt} \in L^2(0,T)$ are Cauchy sequences in $L^2(0,T)$ and therefore converge to some limits $u$, $g$, and $h$ respectively, in $L^2(0,T)$. 
	
	It is easy to see that necessarily $g = u_t$ weakly, see e.g.,~\cite[Section 8.2]{brezis2010functional} for details on the (completeness) properties for Sobolev spaces.
	
	Towards showing that $h = (\genk\Lconv u_{t}+u)_{tt}$, let us first notice that since $\genk \in \mm_{\textup{loc}}(\R^+)$, the operator
	\begin{equation}
		\begin{aligned}
			\operatorname{\mathcal{T}}_{\genk}\ : L^2(0,T) &\rightarrow L^2(0,T) 
			\\
			v&\mapsto \genk\Lconv v
		\end{aligned} 
	\end{equation}
	is continuous and therefore $\genk\Lconv u_{nt} \to \genk\Lconv u_{t}$ in $L^2(0,T)$. Let now $n\geq 1$ and take an arbitrary $\phi \in C_c^2([0,T])$. Integrating by parts twice yields
	\[\intT (\genk\Lconv u_{nt}+u_n)_{tt} \, \phi \ds = \intT (\genk\Lconv u_{nt}+u_n) \,\phi_{tt} \ds.\]
	Passing to the limit, we obtain 
	\[\intT h \,\phi \ds = \intT (\genk\Lconv u_{t}+u) \, \phi_{tt} \ds.\]
	Thus, $u \in V_{\genk}(0,T)$, $ h = (\genk\Lconv u_{t}+u)_{tt}$ weakly, and $\|u_n-u\|_{V_{\genk}(0,T)} \to 0$ as $n\to \infty$.
	
	The compactness of the unit ball follows similarly to, e.g., \cite[Lemma 1]{meliani2023unified} by relying on the properties of $L^p$ spaces.
\end{proof}

We provide an embedding lemma for the space $V_{\genk}(0,T)$. This result is helpful in showing that initial data are attained in the course of the proof of Theorem~\ref{thm:wellp}.

\begin{lemma}\label{lem:continuous_embedding}
	Let $\genk \in \mm_{\textup{loc}}(\R^+)$. Assume that ($\mathcal{A}3$) holds.
	Then we have the continuous embedding   
	$$V_\genk(0,T) \hookrightarrow \{ v \in W^{2,p}(0,T)\,|\, \genk\Lconv v_t \in W^{2,p}(0,T)\},$$
	with $p = \min\{2,q\}>1$.
\end{lemma}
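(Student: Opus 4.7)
The plan is to decouple $v$ and $\genk \Lconv v_t$ from the sum $w := \genk \Lconv v_t + v$, which by definition of $V_\genk(0,T)$ already satisfies $w_{tt} \in L^2(0,T) \subseteq L^p(0,T)$. Setting $g := \genk \Lconv v_t$, the resolvent identity $\tgenk \Lconv \genk = 1$ combined with \ref{assu3} gives
\[
A g + \mathsf{r} \Lconv g \;=\; \tgenk \Lconv g \;=\; 1 \Lconv v_t \;=\; v - v(0).
\]
Eliminating $v$ via $v = w - g$ yields a Volterra equation of the second kind for $g$,
\[
(1+A)\, g + \mathsf{r} \Lconv g = w - v(0).
\]
Since $A \in \R^+$ forces $1+A \geq 1 > 0$, standard Volterra resolvent theory (e.g., \cite[Ch.~2, Theorem 3.5]{gripenberg1990volterra}) produces a resolvent kernel $\rho$ of $\mathsf{r}/(1+A)$ in $L^q_{\textup{loc}}(\R^+)$, promoted to $W^{1,q}_{\textup{loc}}$ in the case $\mathsf{r} \in W^{1,q}$, so that
\[
g \;=\; \frac{1}{1+A}\bigl(w - v(0)\bigr) - \frac{1}{1+A}\, \rho \Lconv \bigl(w - v(0)\bigr).
\]

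Next I would differentiate this representation twice, using the convolution differentiation rule $(\rho \Lconv h)_t = \rho(\cdot)\, h(0) + \rho \Lconv h_t$ and the identity $w(0) - v(0) = g(0)$, to reach
\[
g_{tt} \;=\; \frac{1}{1+A} w_{tt} - \frac{1}{1+A}\rho_t\, g(0) - \frac{1}{1+A}\rho(\cdot)\, w_t(0) - \frac{1}{1+A}\, \rho \Lconv w_{tt}.
\]
Young's inequality $L^q \Lconv L^2 \hookrightarrow L^p$ with $p = \min\{2,q\}$ controls the tail term $\rho \Lconv w_{tt}$; the embedding $w \in W^{2,2}(0,T) \hookrightarrow C^1([0,T])$ makes $w(0)$ and $w_t(0)$ well-defined constants; and the remaining terms are estimated in $L^p$ using the $L^q$ (resp.\ $W^{1,q}$) regularity of $\rho$. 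Combining with the identity $v_{tt} = w_{tt} - g_{tt}$, and noting that the lower-order norms $\|v\|_{W^{1,p}}$ and $\|g\|_{W^{1,p}}$ are already controlled by $\|v\|_{L^2}+\|v_t\|_{L^2}$ and by $\|w\|_{W^{1,2}}+\|v\|_{W^{1,2}}$, yields $v,\, \genk \Lconv v_t \in W^{2,p}(0,T)$ with norms dominated by $\|v\|_{V_\genk(0,T)}$.

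The main obstacle will be the boundary term $\rho_t\, g(0)$: when $g(0) \neq 0$ the second differentiation of the representation formally requires $\rho$ to possess a derivative in $L^p$. This is precisely the role of the regularity upgrade $\mathsf{r} \in W^{1,q}$ imposed in \ref{assu3} whenever $\|\genk\|_{\mm(\{0\})} \neq 0$, since the resolvent $\rho$ then inherits $W^{1,q}$-regularity via the resolvent equation $\rho + (\mathsf{r}/(1+A)) \Lconv \rho = \mathsf{r}/(1+A)$. In the complementary case $\|\genk\|_{\mm(\{0\})} = 0$, one has $g(0) = (\genk \Lconv v_t)(0) = 0$, the offending term vanishes, and the bare $L^q$-regularity of $\rho$ suffices. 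The sharpness of the index $p = \min\{2,q\}$ is dictated exactly by Young's inequality applied to the convolution of $\rho \in L^q$ with $w_{tt} \in L^2$.
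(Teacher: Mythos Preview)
Your argument is correct and rests on the same mechanism as the paper's: convolve with the resolvent $\tgenk = A\delta_0 + \mathsf{r}$ to produce a Volterra equation of the second kind with kernel $\mathsf{r}/(1+A)$, invoke \cite[Ch.~2, Theorem~3.5]{gripenberg1990volterra}, and distinguish the two cases of \ref{assu3}. The organizational difference is which unknown you feed into the Volterra machinery. The paper solves directly for $u_{tt}$, so its right-hand side $f$ contains the boundary datum $(\genk\Lconv u_t)_t(0)$; controlling this in terms of $\|u\|_{V_\genk}$ forces an auxiliary detour --- a second Volterra equation for $\chi=\genk\Lconv u_t$ and a Sobolev-type trace bound for $|u_t(0)|$ (their equation \eqref{eq:smaller_embedding}). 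You instead solve for $g=\genk\Lconv v_t$ at the undifferentiated level and differentiate the variation-of-constants formula afterwards; the only traces that appear are $w(0)$, $w_t(0)$, and $g(0)=w(0)-v(0)$, all of which are bounded immediately by $\|w\|_{H^2}\lesssim \|w\|_{L^2}+\|w_{tt}\|_{L^2}\lesssim\|v\|_{V_\genk}$ and $\|v\|_{H^1}\leq\|v\|_{V_\genk}$. This sidesteps the paper's extra step and is a genuine streamlining. Both proofs invoke $(\genk\Lconv v_t)(0)=0$ when $\|\genk\|_{\mm(\{0\})}=0$ without further comment; this is standard for convolutions with atomless measures once one knows (as here) that $g$ has a continuous representative.
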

\begin{proof}
	Let $u\in V_\genk(0,T)$. Then  $ (\genk\Lconv u_{t})_{tt} + u_{tt} \in L^2(0,T)$.
	Consider then the auxiliary problem
	\begin{equation}\label{phi_equality} 
		\tau(\genk\Lconv u_{t})_{tt} + u_{tt} = \phi\quad\textrm{a.e. on} \quad (0,T)
	\end{equation}
	with $\phi \in L^2(0,T)$. After convolving with the resolvent $\tgenk$, this yields
	\begin{equation}\label{eq:bootstrap}
		(\tau+A) u_{tt}+ {\mathsf{r}} \Lconv u_{tt} = f \in L^p(0,T),
	\end{equation}
	with $p = \min\{2,q\}>1$ and, when $\|\genk\|_{\mm(\{0\})} = 0$, the term $f$ is given by
	$$f = A \phi +  {\mathsf{r}} \Lconv \phi + (\tau \genk\Lconv u_{t})_t(0) \, \mathsf{r}.$$
	In general, $f$ can be expressed as 
	$$f = A \phi +  {\mathsf{r}} \Lconv \phi + (\tau \genk\Lconv u_{t})_t(0) \, \mathsf{r} + (\tau \genk\Lconv u_{t})(0) \mathsf{r}_t,$$ hence the assumption that $\mathsf{r}$ should be in $W^{1,q}(0,T)$ when $\|\genk\|_{\mm(\{0\})} \neq 0$.
	
	Existence theory for Volterra equations of the second kind (see~\cite[Ch.\ 2, Theorem 3.5]{gripenberg1990volterra}) yields then that $$ u_{tt} \in L^p(0,T) \quad \textrm{implying that}\quad u \in W^{2,p}(0,T)$$ and in turn $\genk\Lconv u_t\in W^{2,p}(0,T)$.
	
	The continuous dependence of $u_{tt}$ on $f$ in the topology of $L^p(0,T)$ is guaranteed by the variation of constants formula~\cite[Ch.\ 2, Equation (3.2)]{gripenberg1990volterra}.
	Thus the continuous embedding predicted in the statement of the lemma is verified if we can show that 
	\[\|f\|_{L^p(0,T)} \leq C_T \|u\|_{V_\genk(0,T)},\]
	with $C_T$ being a generic constant that depends on $T$.
	To this end, let us slightly rewrite the right-hand side of \eqref{eq:bootstrap} as
	\begin{equation}\label{eq:embedding}
		f = A \phi +  {\mathsf{r}} \Lconv \phi + (\tau \genk\Lconv u_{t}+u)_t(0) \, \mathsf{r} - u_t(0) \mathsf{r}.
	\end{equation}
	First, notice that $\|\phi\|_{L^p} \leq C_T \|u\|_{V_\genk(0,T)}$ from \eqref{phi_equality}. Furthermore, because $\genk\Lconv u_{t} + u \in H^2(0,T)$, we have that 
	$$
	|(\genk\Lconv u_{t})_t(0) + u_t(0)|  \leq\, C_T \|u\|_{V_\genk(0,T)}
	$$
	We need to establish the continuous dependence of $|u_t(0)|$ on $\|u\|_{V_\genk(0,T)}$. To this end, let us start with the auxiliary equation
	\begin{equation} \label{eq:bootstrap_lower}
		\tau(\genk\Lconv u_{t})_{t} + u_{t} = \Phi\quad\textrm{a.e. on} \quad (0,T),
	\end{equation}
	with $\Phi \in H^1(0,T)$ (since $u \in V_{\genk}(0,T)$). By setting $\chi = \genk\Lconv u_{t}$, we can rewrite \eqref{eq:bootstrap_lower} as: 
	\begin{equation} 
		(\tau+A)\chi_t + \mathsf{r}\Lconv \chi_{t} = \Phi\quad\textrm{a.e. on} \quad (0,T),
	\end{equation}
	Volterra existence theory then yields that $\|\chi\|_{H^1(0,T)} = \|\genk \Lconv u_t\|_{H^1(0,T)}$ continuously depends on $\|u\|_{V_\genk(0,T)}$ (via the variation of constants formula).
	On the other hand, let $t \in (0,T)$, we can express:
	\begin{equation}\label{eq:smaller_embedding}
		u_t(t) = \tgenk \Lconv (\genk\Lconv u_t)_t + \mathsf{r} (\genk\Lconv u_t)(0) +u_t(0).
	\end{equation}
	Then, similarly to the proof of the Sobolev embedding~\cite[Theorem 4.12]{adams2003sobolev} where, instead of the Taylor expansion~\cite[Lemma 4.15]{adams2003sobolev}, we use the relation~\eqref{eq:smaller_embedding} as starting point, we obtain the continuous dependence of $\|u_t\|_{C(0,T)}$ on $\|u\|_{V_\genk(0,T)}$. Going back to \eqref{eq:embedding}, we obtain that indeed
	\[\|f\|_{L^p(0,T)} \leq C_T \|u\|_{V_\genk(0,T)},\]
	proving the desired result.
\end{proof}

\section{Unique solvability of the semi-discrete problem}\label{appendix:solvability_semi_discrete}
For completeness, we provide here details on the Galerkin procedure used to show well-posedness of \eqref{Main_System}. Given an orthogonal basis $\{\phi_n\}_{n \geq 1}$ of $V=H_0^1(\Om)$, let $V_n=\text{span}\{\phi_1, \ldots, \phi_n\} \subset V$ and
\begin{equation}
	\begin{aligned}
		\psi^{(n)}(t) = \sum_{i=1}^n \xin (t) \phi_i.
	\end{aligned}	
\end{equation}
Choose the approximate initial data
\begin{equation}
	\psi^{(n)}_0=  \sum_{i=1}^n \xi_i^{(0, n)} \phi_i,\quad \psi^{(n)}_1= \sum_{i=1}^n \xi_i^{(1, n)} \phi_i, \quad  \psi^{\genk,\,(n)}_2 = \sum_{i=1}^n \xi_i^{(2, n)} \phi_i \in V_n,
\end{equation}
such that
\begin{equation} \label{convergence_approx_initial_data_z_form}
	\begin{aligned}
		\psi^{(n)}_0 \rightarrow \psi_0 \ \textrm{ and } \psi^{(n)}_1 \rightarrow \psi_1 \ \text{in} \ H_0^1(\Om), \ \text{and } \, \psi^{\genk, (n)}_2 \rightarrow \psitwo \  \text{in} \ L^2(\Om), \textrm{ as } \ n \rightarrow \infty.
	\end{aligned}
\end{equation}
For each $n \in \N$, the system of Galerkin equations is given by
\begin{equation}
	\begin{aligned}
		\begin{multlined}[t]\tau	\sum_{i=1}^n (\genk * \xit)_{tt}(t) (\phi_i, \phi_j)_{L^2}+ \sum_{i=1}^n \xitt (\phi_i, \phi_j)_{L^2}+c^2  \sum_{i=1}^n \xin_i ( \nabla \phi_i, \nabla\phi_j)_{L^2} \\
			+\gamma \sum_{i=1}^n (\genk * \xit)(t) (\nabla \phi_i, \nabla \phi_j)_{L^2} + \nu \sum_{i=1}^n \xit(\nabla \phi_i, \nabla \phi_j)_{L^2} =0
		\end{multlined}	
	\end{aligned}		
\end{equation}
for a.e.\ $t \in (0,T)$ and all $j \in \{1, \ldots, n\}$. With $\bxi = [\xin_1 \ \ldots \  \xin_n]^T$, we can write this system in matrix form 
\begin{equation}\label{equation:system_appendix}
	\left \{	\begin{aligned}
		& \tau M	(\genk * \bxitt)_t + M \bxitt+c^2 K \bxi+\gamma K  \genk* \bxit + \nu K \bxit = 0, \\[1mm]
		& (\bxin, \bxit, \genk\Lconv\bxitt)\vert_{t=0} = (\boldsymbol{\xi_0}, \boldsymbol{\xi_1}, \boldsymbol{\xi_2}^{\genk}),
	\end{aligned} \right.
\end{equation}
where $({\bxi_0}, {\bxi_1},  \bxi_2^{\genk})=([\xi_1^{(0,n)} \, \ldots \, \xi_n^{(0, n)}]^T,\, [\xi_1^{(1,n)} \, \ldots \, \xi_n^{(1, n)}]^T,\,[\xi_1^{(2,n)} \, \ldots \, \xi_n^{(2, n)}]^T)$. Here, $M$ and $K$ are the standard mass and stiffness matrices whose entries are given by:
\[ M_{i,j} = \int_\Omega \phi_i \ \phi_j \dx, \qquad  K_{i,j} = \int_\Omega \nabla \phi_i \cdot \nabla \phi_j \dx, \]
for $1 \leq i,j \leq n$.
To prove that the Galerkin system is uniquely solvable, we introduce the new unknown \[\hat{\boldsymbol\chi} = (\genk\Lconv\boldsymbol{\xi}_{t})_{tt}.\]

Thus, recalling the resolvent $\tgenk=A\delta_0 + \mathsf{r}$, we can further
express
\begin{equation}
	\begin{aligned}
		(\genk\Lconv\boldsymbol{\xi}_{t})_{t} &= 	1\Lconv \hat{\boldsymbol\chi} +   \bxi_2^{\genk} \\
		\bxi_{t} &= 1\Lconv \tgenk \Lconv \hat{\boldsymbol\chi} + \bxi_2^{\genk} \Lconv \tgenk + (\genk\Lconv \bxi_t)(0)\, \mathsf{r}
		\\
		\bxi &= 1\Lconv 1 \Lconv \tgenk \Lconv \hat{\boldsymbol\chi} + \bxi_2^{\genk} \Lconv 1 \Lconv \tgenk + (\genk\Lconv \bxi_t)(0)\Lconv\mathsf{r} +\bxi_0.
	\end{aligned}
\end{equation}
Thus, we will only be able to express $\bxi_{tt}$ if the term $(\genk\Lconv \bxi_t)(0)\, \mathsf{r}$ in the expression of $\bxi_t$ is differentiable. Which we discuss next by treating two cases separately.

\subsection*{Case 1: kernel $\genk$ contains a point mass at 0 ($\|\genk\|_{\mm(\{0\})}\neq0$)}

Then, by Assumption~\ref{assu3}, we know that $\mathsf{r} \in W^{1,1}(0,T)$. This allows us to differentiate the relation of $\bxi_t$ to express $\bxitt$. In particular, the system of equations is written as
\begin{equation}\label{semidiscrete_pointmass_var}
	\begin{aligned}
		\boldsymbol\xi_{tt} =& \, {\tgenk}\Lconv \hat{\boldsymbol\chi} + \bxi_2^{\genk} \mathsf{r} +(\genk\Lconv \bxi_t)(0)\, \mathsf{r}_t ,
		\\
		\boldsymbol\xi_t =& \,1 \Lconv {\tgenk}\Lconv \hat{\boldsymbol\chi} + \bxi_2^{\genk} \Lconv\tgenk+ (\genk\Lconv \bxi_t)(0)\, \mathsf{r},
		\\
		\boldsymbol\xi =& \,1 \Lconv 1 \Lconv {\tgenk}\Lconv \hat{\boldsymbol\chi} + \bxi_2^{\genk} \Lconv 1\Lconv\tgenk+
		(\genk\Lconv \bxi_t)(0)\Lconv \mathsf{r} + \boldsymbol\xi_0.
	\end{aligned}
\end{equation}
With these relations, we can rewrite~\eqref{equation:system_appendix} into a Volterra equation of the second kind for $\hat{\boldsymbol\chi}$. To avoid unnecessary repetition, we will only do it below for the case where the kernel $\genk$ does not contain a point mass at 0 and omit it in the current case.

\subsection*{Case 2: kernel $\genk$ does not contain a point mass at 0 ($\|\genk\|_{\mm(\{0\})}=0$)}
In this case, we can ascertain that
$$(\genk\Lconv \bxi_t)(0) =0.$$
The equation of $\bxi_t$ above evaluated at $t=0$ indicates that the second initial datum has to verify the compatibility condition
$$\bxi_1= A\bxi_2^{\genk}.$$ 

Thus, when $\|\genk\|_{\mm(\{0\})}=0$, we can rewrite the semi-discrete fractional derivative system using
\begin{equation}\label{semidiscrete_var}
	\begin{aligned}
		\boldsymbol\xi_{tt} =& \, {\tgenk}\Lconv \hat{\boldsymbol\chi} + \bxi_2^{\genk} \mathsf{r} ,
		\\
		\boldsymbol\xi_t =& \,1 \Lconv {\tgenk}\Lconv \hat{\boldsymbol\chi} + \bxi_2^{\genk} \Lconv\mathsf{r}+ \bxi_1,
		\\
		\boldsymbol\xi =& \,1 \Lconv 1 \Lconv {\tgenk}\Lconv \hat{\boldsymbol\chi} + \bxi_2^{\genk} \Lconv 1\Lconv\mathsf{r} + 1 \Lconv \bxi_1 
		+ \boldsymbol\xi_0,
	\end{aligned}
\end{equation}
with $\bxi_1 = A\bxi_2^{\genk}$.
This yields the following equation for $\hat{\boldsymbol\chi}$:
\begin{equation}\label{wellposed_semidiscrete_rewriting_1}
	\begin{aligned}
		\begin{multlined}[t] \tau	\hat{\boldsymbol\chi}+ {\tgenk}* \hat{\boldsymbol\chi} + c^2 M^{-1}K \, 1*1*{\tgenk}*\hat{\boldsymbol\chi}
			+\gamma M^{-1}K \, 1* 1*\hat{\boldsymbol\chi}\\+  \nu M^{-1}K \, 1 *{\tgenk}* \hat{\boldsymbol\chi}  = {\boldsymbol{ f}},
		\end{multlined}
	\end{aligned}
\end{equation}
where the source term is
\begin{equation}
	\begin{aligned}
		{\boldsymbol{ f}}=\, 	\begin{multlined}[t] - \bxi_2^{\genk} \mathsf{r}- c^2M^{-1}K(\bxi_2^{\genk} 1\Lconv 1\Lconv\mathsf{r} + 1 \Lconv \bxi_1 + \boldsymbol\xi_0) -\gamma M^{-1} K(1\Lconv 1)\bxi_2^{\genk} \\- \nu M^{-1}K (\bxi_2^{\genk} 1\Lconv\mathsf{r}+ \bxi_1).	\end{multlined} 
	\end{aligned}
\end{equation}
Using that $\tgenk = A \delta_0 + \mathsf{r}$, we can further rewrite \eqref{wellposed_semidiscrete_rewriting_1} as
\begin{equation}
	\begin{aligned}
		\begin{multlined}[t] (\tau+A)	\hat{\boldsymbol\chi}+ \mathsf{r}* \hat{\boldsymbol\chi} + c^2 M^{-1}K \, 1*1*\mathsf{r}*\hat{\boldsymbol\chi}
			+\gamma M^{-1}K \, 1* 1*\hat{\boldsymbol\chi}\\+  \nu M^{-1}K \, 1 *\mathsf{r}* \hat{\boldsymbol\chi} + c^2 M^{-1}K \, 1*1*\hat{\boldsymbol\chi}
			+  \nu A M^{-1}K \, 1* \hat{\boldsymbol\chi}  = {\boldsymbol{ f}}.
		\end{multlined}
	\end{aligned}
\end{equation}

Since $\mathsf{r}\in L^1_\textup{loc}(\R^+)$ and $\boldsymbol{ f} \in L^1(0,T)$,
we obtain, through \cite[Ch. 2, Theorem 3.5]{gripenberg1990volterra}, that the system has a unique solution $\hat{\boldsymbol\chi} \in L^1(0,T)$.
In turn, we have that $\boldsymbol{\xi} \in \{\boldsymbol{u} \in W^{2,1}(0,T) \ |\ (\genk\Lconv\boldsymbol{u}_t)_{t} \in W^{1,1}(0,T)\}$.
Thus, $\psi^{(n)} \in \{u \in W^{2,1}(0,T;V_n) \ |\ (\genk\Lconv u_t)_t \in W^{1,1}(0,T;V_n)\}$.

\section{Existence of a regular resolvent}\label{App_sufficient_condition}
The analysis above relies on a number of assumptions among which {we find \ref{assu3}.} This assumption states that if $\|\genk\|_{\mm(\{0\})} \neq 0 $, we need that $\mathsf{r} \in W^{1,q}(0,T)$ for some $q>1$. We give here a lemma which ensures this regularity for a class of relevant examples.
\begin{lemma}
	Let the memory kernel be given by
	$$\genk = B \delta_0 + \mathfrak{s},$$
	with $B\in \R\setminus\{0\}$ and $\mathfrak{s} \in L^q(0,T)$ for $q\geq 1$. Then, $\|\genk\|_{\mm(\{0\})} = |B|$ and $\genk$ has a resolvent $\tgenk \in W^{1,q}(0,T) $.
\end{lemma}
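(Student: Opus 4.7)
The plan is to explicitly write down the resolvent equation and solve it as a pair of Volterra equations of the second kind, using that the point mass in $\genk$ ensures that after one convolution we land in the regular setting covered by~\cite[Ch.\ 2, Theorem 3.5]{gripenberg1990volterra}.

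First, for the point-mass statement, I would simply note that the Lebesgue-integrable function $\mathfrak{s}\in L^q(0,T)$ contributes zero measure to the singleton $\{0\}$, so $\|\genk\|_{\mm(\{0\})} = \|B\delta_0\|_{\mm(\{0\})} = |B|$. The rest of the argument is devoted to constructing $\tgenk$.

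I seek $\tgenk$ in the form $\mathsf{r}$ (no point mass, since we want $W^{1,q}$-regularity). The defining identity $\genk\Lconv\tgenk = 1$ becomes
\begin{equation*}
B\mathsf{r}(t) + (\mathfrak{s}\Lconv \mathsf{r})(t) = 1 \quad\textrm{for a.e. } t\in(0,T).
\end{equation*}
Dividing by $B\neq 0$, this is a Volterra equation of the second kind with kernel $\mathfrak{s}/B \in L^1(0,T)$ and right-hand side $1/B \in L^q(0,T)$. By the standard theory~\cite[Ch.\ 2, Theorem 3.5]{gripenberg1990volterra} it admits a unique solution $\mathsf{r}\in L^q(0,T)$, and the variation-of-constants formula shows in particular that $\mathsf{r}$ is continuous with $\mathsf{r}(0)=1/B$.

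To upgrade $\mathsf{r}\in L^q$ to $\mathsf{r}\in W^{1,q}$, I differentiate the resolvent equation. Using the product rule for convolution, $(\mathfrak{s}\Lconv \mathsf{r})_t = \mathfrak{s}(t)\mathsf{r}(0) + \mathfrak{s}\Lconv \mathsf{r}_t$ (valid in the sense of distributions for $\mathsf{r}$ continuous), one formally obtains
\begin{equation*}
B\,\mathsf{r}_t(t) + (\mathfrak{s}\Lconv \mathsf{r}_t)(t) = -\frac{\mathfrak{s}(t)}{B}.
\end{equation*}
This is again a Volterra equation of the second kind for the unknown $\mathsf{r}_t$, with the same kernel $\mathfrak{s}/B \in L^1(0,T)$ and with right-hand side now in $L^q(0,T)$. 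Applying~\cite[Ch.\ 2, Theorem 3.5]{gripenberg1990volterra} once more yields a unique solution $v\in L^q(0,T)$; to identify $v$ with the weak derivative of $\mathsf{r}$, I would integrate the equation for $v$ from $0$ to $t$ and compare with the equation for $\mathsf{r}$ (both give a Volterra identity uniquely characterising the same function by uniqueness of solutions). This identification is the main technical step to be careful with, but it reduces to algebra after one convolves both sides with $1$ and invokes uniqueness. The conclusion is $\mathsf{r}\in W^{1,q}(0,T)$, as claimed.
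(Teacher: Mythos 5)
Your argument is correct and follows essentially the same route as the paper's own proof: set up the resolvent identity $B\tgenk + \mathfrak{s}\Lconv\tgenk = 1$, solve it by Volterra theory of the second kind, and then observe that the derivative satisfies $B\tgenk_t + \mathfrak{s}\Lconv\tgenk_t = -\mathfrak{s}/B$, whence $\tgenk_t \in L^q(0,T)$. Your added care in justifying where the right-hand side $-\mathfrak{s}/B$ comes from (via $\tgenk(0)=1/B$ and the differentiation rule for the convolution) and in identifying the Volterra solution with the weak derivative only makes explicit what the paper leaves implicit.
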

\begin{proof}
	By writing the definition of a resolvent, we arrive at $\tgenk$ verifying
	\begin{equation}\label{Appendix:VIE2}
		B \tgenk + \mathfrak{s}\Lconv \tgenk =1.
	\end{equation}
	Since $\mathfrak{s} \in L^1(0,T)$ and $B \neq 0$, equation \eqref{Appendix:VIE2} is a Volterra integral equation of the second kind. Thus existence theory of such equations ensures the existence of a unique solution $\tgenk \in C[0,T]$; see, e.g., \cite[Ch.\ 2, Theorem 3.5]{gripenberg1990volterra}. From evaluating \eqref{Appendix:VIE2} at 0, it is clear that $\tgenk(0) = \dfrac1 B$.
	
	To prove that $\tgenk$  belongs to $W^{1,q}(0,T)$, we need to show that its (weak) derivative is in $L^q(0,T)$. Notice to this end that the derivative has to verify the following Volterra equation of the second kind:
	\begin{equation}\label{Appendix:VIE2_der}
		B \tgenk_t + \mathfrak{s}\Lconv \tgenk_t = - \frac{\mathfrak{s}}{B}.
	\end{equation}
	The same existence theory invoked above then ensures that $\tgenk_t \in L^q(0,T)$, which completes the proof.
\end{proof}

\end{document}